\begin{document}
	\pdfoutput=1
	\theoremstyle{plain}
	\newtheorem{thm}{Theorem}[section]
	\newtheorem*{thm1}{Theorem 1}
	\newtheorem*{thm1.1}{Theorem 1.1}
	\newtheorem*{thmM}{Main Theorem}
	\newtheorem*{thmA}{Theorem A}
	\newtheorem*{thm2}{Theorem 2}
	\newtheorem{lemma}[thm]{Lemma}
	\newtheorem{lem}[thm]{Lemma}
	\newtheorem{cor}[thm]{Corollary}
	\newtheorem{pro}[thm]{Proposition}
	\newtheorem{propose}[thm]{Proposition}
	\newtheorem{variant}[thm]{Variant}
	\theoremstyle{definition}
	\newtheorem{notations}[thm]{Notations}
	\newtheorem{rem}[thm]{Remark}
	\newtheorem{rmk}[thm]{Remark}
	\newtheorem{rmks}[thm]{Remarks}
	\newtheorem{defi}[thm]{Definition}
	\newtheorem{exe}[thm]{Example}
	\newtheorem{claim}[thm]{Claim}
	\newtheorem{ass}[thm]{Assumption}
	\newtheorem{prodefi}[thm]{Proposition-Definition}
	\newtheorem{que}[thm]{Question}
	\newtheorem{con}[thm]{Conjecture}
	\newtheorem{exa}[thm]{Example}
	\newtheorem*{assa}{Assumption A}
	\newtheorem*{algstate}{Algebraic form of Theorem \ref{thmstattrainv}}
	
	\newtheorem*{dmlcon}{Dynamical Mordell-Lang Conjecture}
	\newtheorem*{condml}{Dynamical Mordell-Lang Conjecture}
	\newtheorem*{congb}{Geometric Bogomolov Conjecture}

	\newtheorem*{pdd}{P(d)}
	\newtheorem*{bfd}{BF(d)}

	\newtheorem*{probreal}{Realization problems}
	\numberwithin{equation}{section}
	\newcounter{elno}                
	\def\points{\list
		{\hss\llap{\upshape{(\roman{elno})}}}{\usecounter{elno}}}
	\let\endpoints=\endlist
	\newcommand{\SH}{\rm SH}
	\newcommand{\Cov}{\rm Cov}
	\newcommand{\Tan}{\rm Tan}
	\newcommand{\res}{\rm res}
	\newcommand{\Om}{\Omega}
	\newcommand{\om}{\omega}
	\newcommand{\La}{\Lambda}
	\newcommand{\la}{\lambda}
	\newcommand{\mc}{\mathcal}
	\newcommand{\mb}{\mathbb}
	\newcommand{\surj}{\twoheadrightarrow}
	\newcommand{\inj}{\hookrightarrow}
	\newcommand{\zar}{{\rm zar}}
	\newcommand{\Exc}{{\rm Exc}}
	\newcommand{\an}{{\rm an}}
	\newcommand{\red}{{\rm red}}
	\newcommand{\codim}{{\rm codim}}
	\newcommand{\Supp}{{\rm Supp\;}}
		\newcommand{\Leb}{{\rm Leb}}
	\newcommand{\rank}{{\rm rank}}
	\newcommand{\Ker}{{\rm Ker \ }}
	\newcommand{\Pic}{{\rm Pic}}
	\newcommand{\Der}{{\rm Der}}
	\newcommand{\Div}{{\rm Div}}
	\newcommand{\Hom}{{\rm Hom}}
	\newcommand{\im}{{\rm im}}
	\newcommand{\Spec}{{\rm Spec \,}}
	\newcommand{\Nef}{{\rm Nef \,}}
	\newcommand{\Frac}{{\rm Frac \,}}
	\newcommand{\Sing}{{\rm Sing}}
	\newcommand{\sing}{{\rm sing}}
	\newcommand{\reg}{{\rm reg}}
	\newcommand{\Char}{{\rm char\,}}
	\newcommand{\Tr}{{\rm Tr}}
	\newcommand{\ord}{{\rm ord}}
	\newcommand{\diam}{{\rm diam\,}}
	\newcommand{\id}{{\rm id}}
	\newcommand{\NE}{{\rm NE}}
	\newcommand{\Gal}{{\rm Gal}}
	\newcommand{\Min}{{\rm Min \ }}

	\newcommand{\Max}{{\rm Max \ }}
	\newcommand{\Alb}{{\rm Alb}\,}
	\newcommand{\Aff}{{\rm Aff}\,}
	\newcommand{\GL}{{\rm GL}\,}        
	\newcommand{\PGL}{{\rm PGL}\,}
	\newcommand{\Bir}{{\rm Bir}}
	\newcommand{\Aut}{{\rm Aut}}
	\newcommand{\End}{{\rm End}}
	\newcommand{\Per}{{\rm Per}\,}
	\newcommand{\ie}{{\it i.e.\/},\ }
	\newcommand{\niso}{\not\cong}
	\newcommand{\nin}{\not\in}
	\newcommand{\soplus}[1]{\stackrel{#1}{\oplus}}
	\newcommand{\by}[1]{\stackrel{#1}{\rightarrow}}
	\newcommand{\longby}[1]{\stackrel{#1}{\longrightarrow}}
	\newcommand{\vlongby}[1]{\stackrel{#1}{\mbox{\large{$\longrightarrow$}}}}
	\newcommand{\ldownarrow}{\mbox{\Large{\Large{$\downarrow$}}}}
	\newcommand{\lsearrow}{\mbox{\Large{$\searrow$}}}
	\renewcommand{\d}{\stackrel{\mbox{\scriptsize{$\bullet$}}}{}}
	\newcommand{\dlog}{{\rm dlog}\,}    
	\newcommand{\longto}{\longrightarrow}
	\newcommand{\vlongto}{\mbox{{\Large{$\longto$}}}}
	\newcommand{\limdir}[1]{{\displaystyle{\mathop{\rm lim}_{\buildrel\longrightarrow\over{#1}}}}\,}
	\newcommand{\liminv}[1]{{\displaystyle{\mathop{\rm lim}_{\buildrel\longleftarrow\over{#1}}}}\,}
	\newcommand{\norm}[1]{\mbox{$\parallel{#1}\parallel$}}
	\newcommand{\boxtensor}{{\Box\kern-9.03pt\raise1.42pt\hbox{$\times$}}}
	\newcommand{\into}{\hookrightarrow}
	\newcommand{\image}{{\rm image}\,}
	\newcommand{\Lie}{{\rm Lie}\,}      
	\newcommand{\CM}{\rm CM}
	\newcommand{\sext}{\mbox{${\mathcal E}xt\,$}}  
	\newcommand{\shom}{\mbox{${\mathcal H}om\,$}}  
	\newcommand{\coker}{{\rm coker}\,}  
	\newcommand{\sm}{{\rm sm}}
	\newcommand{\pgcd}{\text{pgcd}}
	\newcommand{\trd}{\text{tr.d.}}
	\newcommand{\tensor}{\otimes}
	\newcommand{\hotimes}{\hat{\otimes}}
	\newcommand{\prop}{{\rm prop}}
	\newcommand{\CH}{{\rm CH}}
	\newcommand{\tr}{{\rm tr}}
	\newcommand{\e}{\rm SH}
	
	\renewcommand{\iff}{\mbox{ $\Longleftrightarrow$ }}
	\newcommand{\supp}{{\rm supp}\,}
	\newcommand{\ext}[1]{\stackrel{#1}{\wedge}}
	\newcommand{\onto}{\mbox{$\,\>>>\hspace{-.5cm}\to\hspace{.15cm}$}}
	\newcommand{\propsubset}
	{\mbox{$\textstyle{
				\subseteq_{\kern-5pt\raise-1pt\hbox{\mbox{\tiny{$/$}}}}}$}}
	\newcommand{\sA}{{\mathcal A}}
	\newcommand{\sB}{{\mathcal B}}
	\newcommand{\sC}{{\mathcal C}}
	\newcommand{\sD}{{\mathcal D}}
	\newcommand{\sE}{{\mathcal E}}
	\newcommand{\sF}{{\mathcal F}}
	\newcommand{\sG}{{\mathcal G}}
	\newcommand{\sH}{{\mathcal H}}
	\newcommand{\sI}{{\mathcal I}}
	\newcommand{\sJ}{{\mathcal J}}
	\newcommand{\sK}{{\mathcal K}}
	\newcommand{\sL}{{\mathcal L}}
	\newcommand{\sM}{{\mathcal M}}
	\newcommand{\sN}{{\mathcal N}}
	\newcommand{\sO}{{\mathcal O}}
	\newcommand{\sP}{{\mathcal P}}
	\newcommand{\sQ}{{\mathcal Q}}
	\newcommand{\sR}{{\mathcal R}}
	\newcommand{\sS}{{\mathcal S}}
	\newcommand{\sT}{{\mathcal T}}
	\newcommand{\sU}{{\mathcal U}}
	\newcommand{\sV}{{\mathcal V}}
	\newcommand{\sW}{{\mathcal W}}
	\newcommand{\sX}{{\mathcal X}}
	\newcommand{\sY}{{\mathcal Y}}
	\newcommand{\sZ}{{\mathcal Z}}
	\newcommand{\A}{{\mathbb A}}
	\newcommand{\B}{{\mathbb B}}
	\newcommand{\C}{{\mathbb C}}
	\newcommand{\D}{{\mathbb D}}
	\newcommand{\E}{{\mathbb E}}
	\newcommand{\F}{{\mathbb F}}
	\newcommand{\G}{{\mathbb G}}
	\newcommand{\HH}{{\mathbb H}}
	\newcommand{\LL}{{\mathbb L}}
	\newcommand{\J}{{\mathbb J}}
	\newcommand{\M}{{\mathbb M}}
	\newcommand{\N}{{\mathbb N}}
	\renewcommand{\P}{{\mathbb P}}
	\newcommand{\Q}{{\mathbb Q}}
	\newcommand{\R}{{\mathbb R}}
	\newcommand{\T}{{\mathbb T}}
	\newcommand{\U}{{\mathbb U}}
	\newcommand{\V}{{\mathbb V}}
	\newcommand{\W}{{\mathbb W}}
	\newcommand{\X}{{\mathbb X}}
	\newcommand{\Y}{{\mathbb Y}}
	\newcommand{\Z}{{\mathbb Z}}
	\newcommand{\bk}{{\mathbf{k}}}
	
	\newcommand{\bp}{{\mathbf{p}}}
	\newcommand{\ep}{\varepsilon}
	\newcommand{\bbk}{{\overline{\mathbf{k}}}}
	\newcommand{\Fix}{\mathrm{Fix}}
	
	\newcommand{\conj}{{\rm con}}
	\newcommand{\tor}{{\mathrm{tor}}}
	\renewcommand{\div}{{\mathrm{div}}}
	
	\newcommand{\trdeg}{{\mathrm{trdeg}}}
	\newcommand{\Stab}{{\mathrm{Stab}}}
	
	\newcommand{\OK}{{\overline{K}}}
	\newcommand{\ok}{{\overline{k}}}
	
	\newcommand{\cf}{{\color{red} [c.f. ?]}}
	\newcommand{\jy}{\color{red} jy:}

	\title[]{Local rigidity of Julia sets}
	
\author{Zhuchao Ji}

\address{Institute for Theoretical Sciences, Westlake University, Hangzhou 310030, China}

\email{jizhuchao@westlake.edu.cn}

\author{Junyi Xie}


\address{Beijing International Center for Mathematical Research, Peking University, Beijing 100871, China}

\email{xiejunyi@bicmr.pku.edu.cn}

	
	\date{\today}

	\bibliographystyle{alpha}
	
	\maketitle
	
	\begin{abstract}
	We find criteria ensuring that a local (holomorphic, real analytic, $C^1$) diffeomorphism between the Julia sets of two given rational maps comes from an algebraic correspondence. For example, we show that if there is a local $C^1$-symmetry between the maximal entropy measures of two rational maps, then probably up to a complex conjugation, the two rational maps are dynamically related by an algebraic correspondence. The holomorphic case of our criterion plays an important role in the authors' proof of the Dynamical Andr\'e-Oort conjecture for curves.
		\end{abstract}
	
	
	\tableofcontents
	\section{Introduction}
A classical problem in complex dynamics is to determine when two rational maps of degree at least two have the same Julia set.
 This problem has been studied by many authors \cite{baker1987problem} \cite{beardon1992poiynomials} \cite{dinh2000remarque} \cite{eremenko1989some} \cite{Levin1990} \cite{levin1997two} \cite{schmidt1995polynomials}. In certain situations (e.g., if the Julia set is a circle or $\P^1(\C)$), the shape of the Julia set contains few information.
 So it is also interesting to ask when two rational maps have the same  maximal entropy measure. 	Note that a rational map of degree at least two  has a unique maximal entropy measure. A result of Levin-Przytycki \cite{levin1997two} shows that in this case  the two rational maps are dynamically related by an algebraic correspondence, provided that these two rational maps are not exceptional (see Definition \ref{defiexc}).
Such rigidity issues have recently played important roles in arithmetic dynamics \cite{baker2011preperiodic} \cite{favre2022arithmetic} \cite{ghioca2019dynamical}.

\medskip
 
The first aim of this paper is to study the following more general question:
  \begin{que}\label{querigidityjulia} Can we classify the local (holomorphic, real analytic, $C^1$) diffeomorphisms between the Julia sets (or maximal entropy measures) of two given rational maps of degree at least two?
 \end{que}
 
 Here a diffeomorphisms $h$ between  two measures $\mu_1,\mu_2$ can be understood in a broader sense, asking  either that $h_*(\mu_1)$ is equivalent to $\mu_2$, or $h_*(\mu_1)$ and $\mu_2$ are proportional, see the statement  of Theorem \ref{thmmeasureversion}. This question is inspired by \cite[Problem in page 3]{dujardin2022two} proposed by Dujardin-Favre-Gauthier.
 We expect that under reasonable conditions, such two endomorphisms are \emph{dynamically related} i.e. for endomorphisms $f,g$ on $\P^1(\C)$ of degree at least $2$,
 they are called dynamically related if there exist positive integers $a$ and $b$ and an irreducible algebraic curve $Z$ in $\P^1(\C)\times \P^1(\C)$ such that $Z$ is preperiodic under $(f^a,g^b)$ and dominates both factors. 
 By \cite[Proposition 3.14.]{xie2024algebraicity}, we may take $(a,b)=(\frac{{\rm lcm}\{\deg f,\deg g\}}{\deg f},\frac{{\rm lcm}\{\deg f,\deg g\}}{\deg g}).$ Moreover, we expect that the local diffeomorphism between the Julia sets comes from the algebraic correspondence $Z.$ This local rigidity question was recently studied by Dujardin-Favre-Gauthier \cite{dujardin2022two}and by Luo \cite{luo2021inhomogeneity} in the holomorphic case.

 \begin{rem}\label{remczero}We could not expect a rigidity result for local $C^0$-homeomorphisms.
Indeed for two rational maps in the same stable component,  by $\la$-Lemma \cite[Theorem 4.1]{mcmullen2016complex}, their maximal entropy measures (hence Julia sets) are homeomorphic via a 
quasiconformal homeomorphism.
 \end{rem}

 \medskip

To state our main results, we first introduce some notations. 
Let $g$ be an endomorphisms on $\P^1(\C)$ of degree at least $2$.  
Let $\mu_g$ be the maximal entropy measure for $g$ and
 $\sJ_g$ be the Julia set of $g$. 
 
 \medskip
 
 Denote by $\sigma_{\conj}: \P^1(\C)\to \P^1(\C)$ the anti-holomorphic bijection $z\mapsto \overline{z}.$ 
For every endomorphism $g$ on $\P^1(\C)$ denote by $\overline{g}:\P^1(\C)\to \P^1(\C)$ the endomorphism $z\mapsto \overline{g(\overline{z})}.$
It is clear that $\sJ_{\overline{g}}=\sigma_{\conj}(\sJ_g)$ and we have $\sigma_{\conj}\circ g=\overline{g}\circ\sigma_{\conj}.$
   
\subsubsection{Julia sets of special types}
Recall the following result of Eremenko and Van Strien \cite[Theorem 1 and the first paragraph in page 2]{eremenko2011rational}.
\begin{thm}\label{thmeremen}
There is an open subset $V$ of $\P^1(\C)$ such that 
$\sJ_g\cap V$ is not empty and contained in a $C^1$-smooth curve
if and only if $\sJ_g$ is contained in a circle.
\end{thm}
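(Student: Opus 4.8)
\emph{Proof strategy.} The ``if'' direction I would dispatch at once: a circle in $\P^1(\C)$ is real analytic, hence $C^1$, so any round disc containing a point of $\sJ_g$ may be taken as $V$. The content is the converse, so assume $V$ is open, $\sJ_g\cap V\neq\emptyset$, and (after shrinking $V$) $\sJ_g\cap V$ lies on a $C^1$ simple arc $\gamma$. The first move is to pass to a repelling periodic point: these are dense in $\sJ_g$, so I would fix one, $p\in\sJ_g\cap V$ of period $m$, and set $\lambda:=(g^m)'(p)$, $|\lambda|>1$, in a chart centred at $p$.

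Next, \textbf{local rigidity at $p$ by linearization}. By Koenigs' theorem there is a local biholomorphism $\phi\colon(U,p)\to(\D_r,0)$ with $\phi\circ g^m=\lambda\cdot\phi$ on $U$; shrinking $U$, the set $E:=\phi(\sJ_g\cap U)$ is compact, perfect, contains $0$, is invariant near $0$ under multiplication by $\lambda$ and by $\lambda^{-1}$ (complete invariance of the Julia set), and lies on the $C^1$ arc $\tilde\gamma:=\phi(\gamma\cap U)$ through $0$, whose tangent line at $0$ I call $L$. I would then run a tangent-direction argument: for $z_n\in E\setminus\{0\}$ with $z_n\to 0$ the rescalings $z_n/|z_n|$ accumulate on $L$ since $\tilde\gamma$ is $C^1$ at $0$, whereas multiplication by $\lambda$ sends an accumulation direction $u$ to $(\lambda/|\lambda|)u$, which must again lie on the line $L$; this forces $\lambda\in\R$. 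Feeding $\lambda\in\R$ back in, for any $z\in E\setminus\{0\}$ the orbit $\lambda^{-n}z\in E$ tends to $0$ along $\tilde\gamma$ with argument constant up to sign, so $z\in L$; hence $E\subseteq L$. Pulling back, $\sJ_g$ is locally contained in the real-analytic arc $A:=\phi^{-1}(L\cap\D_r)$, which (as $\lambda L=L$) is a $g^m$-invariant germ on which $g^m$ expands.

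Then, \textbf{globalisation}. By the blow-up property of the Julia set, $\sJ_g=\bigcup_{n\geq 0}g^n(\sJ_g\cap U)$, so $\sJ_g$ is covered by the countably many real-analytic arcs $g^n(A)$ (with at most finitely many singular points). I would introduce the Schwarz reflection $\rho_0$ across $A$, an anti-holomorphic involution near $p$ with $\Fix\rho_0=A$, and observe that the anti-holomorphic maps $g^m\circ\rho_0$ and $\rho_0\circ g^m$ agree on the real-analytic arc $A$, hence near $p$; so $\rho_0$ commutes with $g^m$. Propagating along the dynamics by $\rho(x):=g^{mk}(\rho_0(y))$ whenever $x=g^{mk}(y)$ with $y$ near $p$ should extend $\rho_0$ to an anti-holomorphic involution $\rho$ of $\P^1(\C)$ (away from the finitely many postcritical and exceptional points, then filled in by continuity), still commuting with $g^m$; and $\sJ_g\subseteq\Fix\rho$, since $\sJ_g$ is $g$-invariant and coincides with $\Fix\rho$ near $p$. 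Finally, an anti-holomorphic involution of $\P^1(\C)$ with non-empty fixed set is conjugate to $z\mapsto\overline z$, whose fixed set is the circle $\overline{\R}$; hence $\Fix\rho$ is a circle containing $\sJ_g$. (As a variant for this last step one can instead invoke Fatou's classical analysis of invariant real-analytic curves, or the Eremenko--van Strien real-multiplier theorem together with the fact that Lattès maps have $\sJ_g=\P^1(\C)$.)

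The step I expect to be the \textbf{main obstacle} is the globalisation: one must show that the locally defined reflection $\rho_0$ continues \emph{consistently} --- independently of the branch of $g^{-mk}$ and of $k$ --- to a single-valued anti-holomorphic involution of all of $\P^1(\C)$, which amounts to controlling its behaviour over the critical values of the iterates $g^{mk}$ and over the exceptional set, where the naive extension formula can fail. This is the standard but delicate monodromy argument for such reflection continuations; by contrast, the linearization picture at $p$ that produces the invariant germ $A$ is entirely elementary.
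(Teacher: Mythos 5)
First, a point of comparison: the paper does not prove Theorem \ref{thmeremen} at all --- it is quoted from Eremenko--van Strien \cite{eremenko2011rational} (their Theorem 1 on rational maps with real multipliers, together with the remark in the first paragraph of page 2 there), so there is no internal proof to match yours against. Judged on its own terms, your ``if'' direction and your local analysis are correct: linearization at a repelling periodic point $p\in V$, the tangent-direction argument forcing $\lambda\in\R$, and the conclusion that $E\subseteq L$, so that $\sJ_g$ near $p$ lies on the $g^m$-invariant real-analytic germ $A=\phi^{-1}(L)$, are all sound. This is exactly the standard reduction of the $C^1$ hypothesis to the ``real multipliers'' situation treated in the cited paper.

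The globalisation, however, is a genuine gap and not a ``standard but delicate'' monodromy check. Your extension $\rho(x):=g^{mk}(\rho_0(y))$ is justified only by the germ-level identity $\rho_0\circ g^m=g^m\circ\rho_0$ at $p$, i.e.\ only by the existence of one repelling cycle with real multiplier and its local invariant arc; the actual hypothesis $\sJ_g\cap U\subseteq A$ is never used past the local step. But that germ data alone cannot produce a global anti-holomorphic involution commuting with an iterate: for instance, $z^2+c$ with $c$ non-real chosen on the real locus of the multiplier of a repelling cycle has exactly this local structure, yet its Julia set lies on no circle and (by a short computation with the centralizer of $z^2+c$) it admits no anti-holomorphic involution commuting with any iterate. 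For such maps the forward images $g^{mk}(A)$ are wildly immersed curves and your formula is genuinely multivalued --- different choices of $k$, of the preimage $y\in B(p,\epsilon)$, and of branches over the critical values of $g^{mk}$ give different outputs --- so the well-definedness you defer is not a technicality; it is where the containment of $\sJ_g$ in the curve must enter globally, and that is the entire content of the theorem. This is precisely why the paper cites Eremenko--van Strien, whose proof of the underlying real-multiplier theorem (Latt\`es map or Julia set on a circle) follows a completely different and much longer route. Your fallback of invoking that theorem, plus the observation that Latt\`es maps have $\sJ_g=\P^1(\C)$ and are therefore excluded by the hypothesis, does yield a correct proof --- but then the proof is the citation, not the reflection argument you propose.
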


\begin{defi}
Let $C_g$ be the smallest real analytic subset of $\P^1(\C)$ containing $\sJ_f$. 
\end{defi}
By Theorem \ref{thmeremen}, $C_g$ is $\P^1(\C)$ if $\sJ_g$ is not contained in a circle, otherwise, $C_g$ is the unique circle containing $\sJ_g.$

\medskip

\begin{defi}\label{defismjulia}
We say that $\sJ_g$ is \emph{smooth} if it   equals to $\P^1(\C)$, a circle, or a segment. 
\end{defi}
By Theorem \ref{thmeremen} and \cite[Corollary 4.11]{milnor2011dynamics}, $\sJ_g$ is smooth if and if it is $C^1$-smooth at some point in $\sJ_g.$ 

\medskip

\begin{defi}\label{defiexc}
 As in \cite[Section 1.1]{ji2022homoclinic}, we call $g$ \emph{exceptional} if it is a Latt\`es map or semiconjugates to a monomial map.  Here a rational  map of degree at least 2 is called \emph{Latt\`es} if it is (holomorphically) semi-conjugate to an endomorphism on an elliptic curve. 
\end{defi}
It is clear that when $g$ is exceptional, $\sJ_g$ is smooth and $\mu_g$ is equivalent to the Lebesgue measure on $\sJ_g.$

\subsection{Main results}

In our first result, we answer Question \ref{querigidityjulia} under a measure theoretic assumption.  The holomorphic case of Theorem \ref{measurerigidity}  plays an important role in the authors' proof of the Dynamical Andr\'e-Oort conjecture for curves \cite{ji2023dao}.

For two measures $\mu_1$, $\mu_2$, we write $\mu_1\propto \mu_2$ if they are proportional, i.e. there exists $c>0$ such that $\mu_1=c\mu_2$. Two measures $\mu_1,\mu_2$ are called equivalent if the set of null sets of $\mu_1$ and $\mu_2$ are  the same. A $C^1$ (resp. real analytic) diffeomorphism is a $C^1$ (resp. real analytic) map with $C^1$ (resp. real analytic) inverse.

\begin{thm}[Theorem \ref{measurerigidity}, \ref{measurerigidityrealan} and \ref{measurerigidityrealancone}]\label{thmmeasureversion}
Let $f,g$ be two endomorphisms on $\P^1(\C)$ of degree at least $2$.  Assume that one of them is non-exceptional. 
Let $\mu$ (resp. $\nu$) be a non-atomic invariant ergodic probability measure with positive Lyapunov exponent of $f$ (resp. $g$). 
Let $U\subset \P^1(\C)$ be a connected open subset such that 
$U\cap \, C_g$ is connected and $U\cap \sJ_g\neq \emptyset$. 
Let $h:U\to h(U)\subseteq \P^1(\C)$ be a homeomorphism such that 
\begin{points}
\item $h(U\cap \sJ_g)=h(U)\cap \sJ_f$; if $\sJ_f$ is smooth, we assume further that $h_\ast (\mu_g)\propto \mu_f$ on $h(U)$;
\item  $h_\ast (\nu)$ is equivalent to  $\mu$ on $h(U)$. 
\end{points}
Then the following statements holds:
\begin{points}
\item[{\bf Holomorphic case:}] If $h$ is holomorphic, then there exist positive integers $a$ and $b$ and an irreducible algebraic curve $Z$ in $\P^1(\C)\times \P^1(\C)$ such that $Z$ is preperiodic under $(f^a,g^b)$ and contains the graph $\{(h(x),x), x\in U\}$ of $h$. 

\medskip

\item[{\bf Real analytic case:}] If $h$ is an orientation preserving real analytic diffeomorphism, then there exist positive integers $a$ and $b$ and an irreducible algebraic curve $Z$ in $\P^1(\C)\times \P^1(\C)$ such that $Z$ is preperiodic under $(f^a,g^b)$ and $Z$ contains $\{(h(z),z), z\in U\cap C_f\}$, the graph of $h$.
Moreover, if $\sJ_f$ or $\sJ_g$ is not contained in any circle, then $h$ is holomorphic. 

\medskip

\item[{\bf $C^1$ case:}] If $h$ is a $C^1$-diffeomorphism, then up to change $f$ to its complex conjugation $\overline{f}$, there exist positive integers $a$ and $b$ and an irreducible algebraic curve $Z$ in $\P^1(\C)\times \P^1(\C)$ such that $Z$ is periodic under $(f^a,g^b)$.
Moreover, if $\sJ_f$ or $\sJ_g$ is $\P^1(\C)$, then $h$ is conformal.
\end{points}
\end{thm}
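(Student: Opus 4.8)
The plan is to reduce all three cases to a single mechanism: extracting, from the local conjugacy $h$ and the dynamical/measure-theoretic data, enough rigidity to force $h$ (or a related map) to be the restriction of an algebraic correspondence, and then invoke the characterization of dynamically related pairs via periodic curves. I would first treat the holomorphic case, since it is the core and the other two are bootstrapped from it. Here the strategy is to use the measure-equivalence hypotheses (ii), together with (i), to show that $h$ conjugates the measures $\nu$ and $\mu$ up to the dynamics: the point is that a local holomorphic map matching, up to equivalence, two measures with positive Lyapunov exponent and matching the Julia sets should, after pulling back along iterates, match the \emph{equilibrium} measures $\mu_g$ and $\mu_f$ exactly (not just up to equivalence), because $\mu_g$ is the unique such measure and the densities are forced to be constant by an exactness/rigidity argument in the spirit of Levin--Przytycki. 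Once $h$ transports a backward orbit of $\mu_g$-generic points to one of $\mu_f$-generic points in a way compatible with the local branches of $f$ and $g$, one produces a germ of analytic correspondence between the two systems; one then shows this germ propagates to an algebraic curve $Z\subset\P^1\times\P^1$ (using, e.g., that the graph of $h$ is contained in the closure of a grand orbit, or a Zariski-density argument on the complement of the exceptional set) and that $Z$ is preperiodic under $(f^a,g^b)$ with $a,b$ as dictated by \cite[Proposition 3.14]{Xie2022}. The non-exceptionality of one of $f,g$ is what prevents the degenerate situations (Lattès / monomial) where the grand orbit fails to be Zariski dense in a useful way.

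For the real analytic case, I would complexify. Since $h$ is real analytic on $U$, it extends to a holomorphic map $\tilde h$ on a neighborhood of $U\cap C_g$ inside $\P^1(\C)$ (when $C_g=\P^1$ this is the previous case; when $C_g$ is a circle, $\tilde h$ is defined on a one-sided or two-sided neighborhood of an arc of that circle). The orientation-preserving hypothesis guarantees the complexification goes the same way on both systems (no hidden conjugation). Then I would apply the holomorphic case to $\tilde h$: it matches $\sJ_g\subset C_g$ with $\sJ_f\subset C_f$ (here the ``smooth $\sJ_f$'' clause of hypothesis (i) about proportionality of maximal entropy measures is exactly what is needed when $C_f$ is a circle and the measures could a priori differ by a density), so we get the algebraic curve $Z$ containing the graph of $\tilde h$ over $U\cap C_f$. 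The ``moreover'' — if either Julia set is not in a circle then $h$ is holomorphic — follows because then $C_f$ or $C_g$ is $\P^1$, so $Z$ projects dominantly onto a factor with $\P^1$-fibers issues ruled out, forcing $Z$ to be a graph of an algebraic (hence holomorphic) map near $U$, which must coincide with $h$.

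For the $C^1$ case, the obstacle — and I expect this to be the main difficulty of the whole theorem — is that a $C^1$ homeomorphism is far from analytic, so one cannot complexify directly; the rigidity must come entirely from the dynamics. Here I would use a linearization/differentiability argument at a well-chosen point: at a $\mu_g$-generic point $x_0$ the map $g$ is (after passing to iterates and normalizing) locally close to a linear map with multiplier of modulus $(\deg g)^{1/2}$-ish controlled by the Lyapunov exponent, and similarly for $f$ at $h(x_0)$; the $C^1$ conjugacy relation $h\circ g^a = f^b\circ h$ near such points forces the derivative $D_{x_0}h$ to intertwine these linear models, and by a Livšic-type / bounded-distortion argument along a dense set of periodic orbits one upgrades ``$C^1$ and conjugating the dynamics on the Julia set'' to ``the derivative cocycle is matched'', which then — via the conformality of the dynamics — forces $h$ to be either conformal or anticonformal on $\sJ_g$; the anticonformal alternative is precisely the ``replace $f$ by $\overline f$'' clause. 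After that substitution $h$ becomes (the restriction of) a holomorphic or real-analytic conjugacy and one feeds it back into the earlier cases to obtain the periodic curve $Z$; when $\sJ_f$ or $\sJ_g$ is $\P^1(\C)$ the Julia set has nonempty interior, so $C^1$ plus the matched derivative cocycle on an open set promotes $h$ to conformal on that open set by the same argument. The delicate points I anticipate are: (a) ruling out that the $C^1$-conjugacy distorts the conformal structure in a measurable-but-not-continuous way — this needs the positive Lyapunov exponent and ergodicity crucially, to run the Livšic argument; and (b) handling the transition at the boundary between the analytic and $C^1$ regimes, i.e. making sure the curve $Z$ produced from the conformalized $h$ genuinely contains the graph of the original $h$ (it does, by analytic continuation of the graph along the grand orbit, but this needs care).
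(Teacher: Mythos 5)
Your proposal has a fundamental gap that runs through all three cases: the hypotheses of the theorem do \emph{not} include any conjugacy or semiconjugacy between $f$ and $g$ --- $h$ is only a local homeomorphism matching Julia sets and transporting the measure class of $\nu$ to that of $\mu$ --- yet each of your arguments quietly assumes a dynamical relation. In the holomorphic case you say that once $h$ transports generic backward orbits ``in a way compatible with the local branches of $f$ and $g$'' one gets a germ of correspondence, but producing that compatibility is precisely the whole difficulty, and no mechanism is given; also $\mu,\nu$ are arbitrary ergodic measures of positive exponent, not the equilibrium measures, so the uniqueness of the maximal entropy measure cannot upgrade ``equivalent'' to ``equal''. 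The paper's actual engine is different: using Pesin-type bounded distortion (a Levin--Przytycki lemma) and condition (ii), it finds a \emph{bi-conical} point $x$, i.e.\ times $n_j,m_j$ for which the renormalized maps $h_j=f^{m_j}\circ h\circ g_{n_j}$ are injective with uniformly bounded geometry; Montel plus Levin's finiteness theorem for local symmetries of $\sJ_f$ forces $h_j=h_{j_0}$ for large $j$, which \emph{creates} the functional equation $f^{a}\circ h_1=h_1\circ g^{b}$ on a two-component horseshoe, and then a generalized Inou theorem (resting on Eremenko's theorem, and needing the non-exceptionality to exclude the Latt\`es/monomial alternative for the uniformized semiconjugacy) yields the invariant algebraic curve; a final pullback lemma gives preperiodicity of the curve through the graph of $h$. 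None of these steps appears in your outline. In the $C^1$ case your Liv\v{s}ic-type matching of derivative cocycles over periodic orbits likewise presupposes $h\circ g^{a}=f^{b}\circ h$, which is not available; the paper instead takes $C^1$ limits of the renormalized maps $h_j$ (Lemma \ref{lemconenormal}) to obtain a \emph{real analytic} map $H$, and the delicate point --- solved with the positive lower density of the good-times set and Corollary \ref{corsequldis} --- is to show a limit point is bi-conical for $(g,H,f)$, since hypothesis (ii) need not survive the limit. Moreover you assert that the resulting curve contains the graph of the original $h$; the theorem does not claim this in the $C^1$ case and the paper explicitly states it cannot be shown.

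The real analytic case also does not reduce to the holomorphic one in the way you describe. Complexification of $h$ to a holomorphic map is only meaningful along a real-analytic curve, i.e.\ when $\sJ_g$ lies in a circle (and this is exactly how the paper handles that subcase); when $C_g=\P^1(\C)$ a real analytic homeomorphism of an open subset of $\P^1(\C)$ is in general nowhere holomorphic, so ``this is the previous case'' is false and your reduction collapses precisely in the main case. There the paper must prove conformality of $h$, replacing Montel by Lemma \ref{lemconenormal}, Levin's theorem by its real analytic variant (Theorem \ref{thmlevinrealan}), and the Inou step by a real analytic rigidity theorem for horseshoes (Theorem \ref{thmrealanlyhos}) built on Sullivan's rigidity of non-linear CERs and the characterization of linear CERs. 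These substitutes are essential and absent from your proposal.
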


Here a homeomorphism $h:U\to h(U)$ on a open subset $U$ is called {\em conformal} if $h$ is holomorphic or antiholomorphic on every connected component of $U$.
\begin{rem}
One may take $\mu, \nu$ to be $\mu_f$ and $\mu_g$ respectively. Then Theorem \ref{thmmeasureversion} completely answers the measure version of Question \ref{querigidityjulia} in the holomorphic and real analytic cases. Our answer in the $C^1$ case is less precise (c.f. Remark \ref{remcomconj}).
\end{rem}

\begin{rem}
	The holomorphic case of Theorem \ref{measurerigidity} was proved earlier  by Dujardin-Favre-Gauthier \cite{dujardin2022two} under the following additional assumptions: (1) $h$ maps a repelling periodic point of $g$ to a preperiodic point of $f$; (2) $\mu=\mu_f$ and $\nu=\mu_g$; (3) $h_\ast(\nu)$ is strongly equivalent to $\mu$ on $h(U)$, i.e. there exists $C>0$ such that $\frac{1}{C}\mu \leq h_\ast(\nu)\leq C\mu$.
\end{rem}

\medskip

Our second result answers Question \ref{querigidityjulia} under a non-uniform hyperbolic assumption (see condition (ii) in the below theorem) which is satisfied by e.g.  Topological Collet-Eckmann maps \cite[Section 1.1, Theorem E, Lemma 7.2]{przytycki2007statistical} and Weakly Hyperbolic maps in the sense of Rivera Letelier-Shen \cite[Definition 2.1 and 2.2, Corollary 6.3]{rivera2007statistical}. Examples of Weakly Hyperbolic maps in the sense of Rivera Letelier-Shen can be found in  \cite[Fact 2.4]{rivera2007statistical}.

\begin{thm}[Theorem \ref{conicalrigidity}, \ref{conicalrigidityrealan} and \ref{conicalrigiditycone}]\label{conicalrigiditytoget}
Let $f,g$ be two endomorphisms on $\P^1(\C)$ of degree at least $2$.  Assume that one of them is non-exceptional. 
Let $U\subset \P^1(\C)$ be a connected open subset such that 
$U\cap \, C_g$ is connected and $U\cap \sJ_g\neq \emptyset$. 
Let $h:U\to h(U)\subseteq \P^1(\C)$ be a homeomorphism such that
\begin{points}
	\item $h(U\cap \sJ_g)=h(U)\cap \sJ_f$; if $\sJ_f$ is smooth, we assume further that $h_\ast (\mu_g)\propto \mu_f$ on $h(U)$;
	\item the Hausdorff dimension of non-conical points of $g$ is $0$ (c.f. Definition \ref{defconical}).
\end{points}
Then the following statements holds:
\begin{points}
\item[{\bf Holomorphic case:}] If $h$ is holomorphic, then there exist positive integers $a$ and $b$ and an irreducible algebraic curve $Z$ in $\P^1(\C)\times \P^1(\C)$ such that $Z$ is preperiodic under $(f^a,g^b)$ and contains the graph $\{(h(x),x), x\in U\}$ of $h$. 

\medskip

\item[{\bf Real analytic case:}] If $h$ is an orientation preserving real analytic diffeomorphism, then there exist positive integers $a$ and $b$ and an irreducible algebraic curve $Z$ in $\P^1(\C)\times \P^1(\C)$ such that $Z$ is preperiodic under $(f^a,g^b)$ and contains  $\{(h(z),z), z\in U\cap C_f\}$, the graph of $h$. 
Moreover, if $\sJ_f$ or $\sJ_g$ is not contained in any circle, then $h$ is holomorphic. 

\medskip

\item[{\bf $C^1$ case:}] If $h$ is a  $C^1$-diffeomorphism, then up to change $f$ to its complex conjugation $\overline{f}$, there exist positive integers $a$ and $b$ and an irreducible algebraic curve $Z$ in $\P^1(\C)\times \P^1(\C)$ such that $Z$ is periodic under $(f^a,g^b)$.
Moreover, if $\sJ_f$ or $\sJ_g$ is $\P^1(\C)$, then $h$ is conformal.
\end{points}
\end{thm}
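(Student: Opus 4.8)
We only sketch the argument, which runs parallel to that of Theorem \ref{thmmeasureversion} (that is, of Theorems \ref{measurerigidity}, \ref{measurerigidityrealan} and \ref{measurerigidityrealancone}); the single structural input that changes is the mechanism producing a dynamical \emph{zooming} (scaling‑limit) structure along $\sJ_g$: in the measure version it came from Pesin theory applied to the positive‑exponent measure $\nu$, whereas here it will come directly from hypothesis (ii). The plan is as follows. Since any Julia set of a rational map of degree at least $2$ has positive Hausdorff dimension while the non‑conical points of $g$ have Hausdorff dimension $0$, the conical points of $g$ are dense in $\sJ_g$ and form a set of full Hausdorff dimension. Avoiding the at most countable grand orbit of the critical points of $g$ and the (countable) set of preperiodic points, we may fix a conical point $x_0\in U\cap\sJ_g$ with infinite forward orbit.

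\textbf{Reduction to the holomorphic case.} I would first reduce the real analytic and the $C^1$ cases to the holomorphic one, exactly as in Theorems \ref{measurerigidityrealan} and \ref{measurerigidityrealancone}. If $\sJ_g$ is contained in a circle, then $C_g$ is that circle and $C_f$ is also a circle; straightening $C_f$ and $C_g$ to $\R\subset\P^1(\C)$ by Möbius transformations, an orientation preserving real analytic $h$ on the arc $U\cap C_f$ extends through Schwarz reflection to a holomorphic homeomorphism of a neighbourhood, which is the holomorphic situation. If $\sJ_g$ (or $\sJ_f$) is not contained in a circle, then at $x_0$ the $g$-dynamics realizes the scaling limit of $\sJ_g$ at $x_0$ as a univalent, hence conformal, image of a definite‑size piece of $\sJ_g$; an orientation preserving real analytic $h$ would carry it to a scaling limit of $\sJ_f$ at $h(x_0)$ through the $\R$-linear map $Dh(x_0)$, and a non‑conformal $\R$-linear map cannot identify two such conformal fractal pieces unless they lie on lines, which is excluded here, so $Dh(x_0)$ is conformal at the dense set of conical points and $h$ is holomorphic. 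In the $C^1$ case the same tangent‑object comparison forces $Dh(x_0)$ to be conformal at each conical point: when $\sJ_f$ is smooth this is where the normalization $h_\ast(\mu_g)\propto\mu_f$ enters (matching the absolutely continuous tangent measures), and when $\sJ_f$ is not smooth one again uses that the genuinely two‑dimensional tangent structure of a non‑smooth Julia set rules out a non‑conformal $Dh(x_0)$; replacing $f$ by $\overline{f}$ if $Dh(x_0)$ is anti‑conformal, we are reduced to the holomorphic case and $h$ is conformal — which is precisely the content of the ``moreover'' clauses in the degenerate situations where $\sJ_f$ or $\sJ_g$ equals $\P^1(\C)$ and the Julia set carries no further structure.

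\textbf{Holomorphic case.} Here $h$ is a biholomorphism onto its image, so $h'$ is nowhere zero. Using that $x_0$ is conical, choose $n_k\to\infty$ and radii $r_k\to 0$ so that $g^{n_k}$ is univalent on $B(x_0,2r_k)$ onto a ball of definite radius about $g^{n_k}(x_0)$ with Koebe distortion bounded independently of $k$; passing to a subsequence, $g^{n_k}(x_0)\to\hat x\in\sJ_g$, and the normalized inverse branches of $g^{n_k}$ converge to a univalent map $\Phi$ on a definite ball about $\hat x$ identifying the blow‑up of $\sJ_g$ at $x_0$ with $\Phi(\sJ_g\cap B(\hat x,\delta))$. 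Precomposing $h$ with these inverse branches and postcomposing with a complex affine rescaling at $h(x_0)$ yields, in the limit, a holomorphic homeomorphism defined on a \emph{definite} neighbourhood of $\hat x$ and again matching $\sJ_g$ with $\sJ_f$; incorporating suitable branches of $f^{-m_k}$ (with $m_k$ chosen so that after passing to $(f^a,g^b)$, $(a,b)$ as in \cite[Proposition 3.14]{Xie2022}, the degrees agree), iterating along the forward orbit of $x_0$, and using the density of conical points to patch the local pieces together, the analytic continuation of the graph $\{(h(x),x):x\in U\}$ and its $(f^a,g^b)$-iterates form a precompact family of local analytic curves in $\P^1(\C)\times\P^1(\C)$ all meeting $\sJ_f\times\sJ_g$. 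A normal‑family argument together with an algebraicity criterion for such correspondence‑invariant analytic curves (as in \cite{dujardin2022two} and \cite{Xie2022}, in the spirit of \cite{levin1997two}) then shows that this continuation is contained in an algebraic curve $Z$ which is preperiodic under $(f^a,g^b)$; non‑exceptionality of one of $f,g$ prevents the analytic set from degenerating, and in the $C^1$ case $Z$ is moreover periodic, being the closure of the graph of the conformal map $h$.

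\textbf{Main obstacle.} The crux is the last step: propagating the local graph of $h$ coherently along all of $\sJ_g$ while keeping \emph{uniform} control of the Koebe distortion at conical points, and then promoting the resulting correspondence‑invariant analytic set to an \emph{algebraic} curve with the claimed periods — this forces one to combine the distortion estimates at conical points with the classification of $(f^a,g^b)$-periodic curves and to carefully exclude the Latt\`es and monomial‑semiconjugate cases. A secondary difficulty is the $C^1\Rightarrow$ conformal upgrade when $\sJ_f$ is not smooth, i.e. making rigorous the assertion that a non‑smooth Julia set has tangent structure rigid enough to forbid a non‑conformal $\R$-linear tangent map of $h$.
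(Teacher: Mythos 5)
There is a genuine gap: your sketch names the right overall shape (zoom at a conical point, then an algebraicity step), but it omits exactly the two mechanisms that carry the paper's proof, and you yourself list them as unresolved ``obstacles''. The first is the construction of a \emph{bi-conical} point (Definition \ref{defibiconical}). You fix an arbitrary conical, non-preperiodic $x_0\in U\cap\sJ_g$ and renormalize the target side by ``a complex affine rescaling at $h(x_0)$'' or ``suitable branches of $f^{-m_k}$''; but to produce a correspondence between $f$ and $g$ the target side must be renormalized by the dynamics of $f$ itself, and for a general point of $\sJ_f$ there is no reason that forward iterates $f^{m_k}$ are injective and of definite size on $h(W_k)$. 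The paper gets this by first fixing a CER $K\subset h(U)$ of $f$ which is not a periodic orbit: $K$ has positive Hausdorff dimension, hence so does $h^{-1}(K)$, and hypothesis (ii) then supplies a conical non-preperiodic $x$ with $h(x)\in K$; uniform expansion of $f$ on $K$ together with $d(K,C(f))>0$ gives the two-sided definite-scale control of $h_j=f^{m_j}\circ h\circ g_{n_j}$. Using (ii) only to say conical points are dense (or of full dimension) does not yield this, and everything downstream depends on it.

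The second missing mechanism is the passage from the renormalized family to an algebraic curve. In the paper this goes: Levin's rigidity of local symmetries of $\sJ_f$ (Theorem \ref{thmlevin}) forces $h_j=h_{j_0}$ along a subsequence, which produces a genuine functional equation $f^{a}\circ h_1=h_1\circ g^{b}$ on an open set $W$ on which $g^{b}$ is non-injective with repelling fixed points, and then the generalized Inou theorem (Theorem \ref{inou}, with Eremenko's theorem excluding the $\C$ and $\C^{*}$ uniformizations) gives an invariant algebraic curve, followed by a short lemma giving preperiodicity of the component through the graph of $h$ (Lemma \ref{conical}). Your ``normal-family argument together with an algebraicity criterion for correspondence-invariant analytic curves'' is the conclusion restated, not an argument: nothing in the sketch produces the semiconjugacy. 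The same problem recurs in the real analytic and $C^1$ cases, where conformality of $dh$ cannot be read off from a heuristic comparison of tangent objects of the Julia sets; the paper proves it via the real analytic Levin theorem (Theorem \ref{thmlevinrealan}) and the rigidity of real analytic conjugacies of horseshoes (Theorem \ref{thmrealanlyhos}, resting on Sullivan's CER rigidity and the linear-CER classification), and in the $C^1$ case one must first extract a real analytic limit $H$ of the renormalized maps (Lemma \ref{lemconenormal}) and show that $H$ itself admits a bi-conical point before Lemma \ref{conicalrealanaly} applies; density of bi-conical points (available here because the CER of $f$ can be placed in any open set when $\sJ_f=\P^1(\C)$) is then what upgrades conformality of $dh$ at one point to conformality of $h$.
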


\begin{rem}
Since holomorphic maps are real analytic,  the real analytic case of Theorem \ref{thmmeasureversion} and \ref{conicalrigiditytoget} implies the holomorphic case.
But actually the proof of the real analytic case is based on the proof of the holomorphic case. That is the reason why we state the holomorphic case separately. 
\end{rem}

\begin{rem}
In Theorem \ref{thmmeasureversion} and \ref{conicalrigiditytoget},
the assumption of the non-exceptionalness is necessary (see \cite[Remark 2.13]{dujardin2022two}).
\end{rem}

\begin{rem}When $C_g\neq \P^1(\C)$, there are infinitely many local real analytic diffeomorphisms whose  restrictions to $C_g$ are the same as $h$. So
in the real analytic case of Theorem \ref{thmmeasureversion} and \ref{conicalrigiditytoget},
 it is necessary
to  restrict  $h$ to $C_g.$ 
\end{rem}

\begin{rem}\label{remcomconj}If $h$ in the real analytic case of Theorem \ref{thmmeasureversion} (resp. \ref{conicalrigiditytoget}), does not preserve the orientation, then $\sigma_{\conj}\circ h$ preserves the orientation.
So we may apply Theorem \ref{thmmeasureversion} (resp. \ref{conicalrigiditytoget}), for $f,\overline{g}$ and $\sigma_{\conj}\circ h$.
\end{rem}

\begin{rem}The $C^1$ case of Theorem \ref{thmmeasureversion} and \ref{conicalrigiditytoget} is less precise than the holomorphic or the real analytic case. We showed that, probably up to a complex conjugation, $f$ and $g$ are dynamically related. However,
we can not show that the graph of $h$ has intersection with $Z.$ 
\end{rem}

\medskip

Combing the holomorphic case of Theorem \ref{thmmeasureversion} and \cite[Theorem 4.1]{dujardin2022two}, we get the following consequence for polynomial maps.
\begin{cor}\label{corpoly}
Let $f,g$ be two polynomial  endomorphisms on $\P^1(\C)$ of degree at least $2$.  Assume that either $f$ or $g$ is non-exceptional and 
\begin{points}
\item[(1)] either $\sJ_g$ is disconnected;
\item[(2)] or $\sJ_g$ is connected and locally connected.
\end{points}
Let $U\subset \P^1(\C)$ be a connected open subset.
Let $h:U\to h(U)\subseteq \P^1(\C)$ be a biholomorphic map such that $h(U\cap \sJ_g)=h(U)\cap \sJ_f$; if $\sJ_f$ is smooth, we assume further that $h_\ast (\mu_g)\propto \mu_f$ on $h(U)$.
 Then there exist positive integers $a$ and $b$ and an irreducible algebraic curve $Z$ in $\P^1(\C)\times \P^1(\C)$ such that $Z$ is preperiodic under $(f^a,g^b)$ and contains the graph $\{(h(x),x), x\in U\}$ of $h$. 
\end{cor}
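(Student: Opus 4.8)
The plan is to reduce the corollary entirely to the holomorphic case of Theorem \ref{thmmeasureversion} and to the polynomial input \cite[Theorem 4.1]{dujardin2022two}. First I would take $\mu=\mu_f$ and $\nu=\mu_g$ in Theorem \ref{thmmeasureversion}: these are non-atomic, invariant, ergodic, and have positive Lyapunov exponent (equal to $\frac12\log\deg$ when $f$, $g$ are non-exceptional, and still positive in general by Przytycki's bound), so hypotheses (i)--(ii) there are automatic once we know $h(U\cap\sJ_g)=h(U)\cap\sJ_f$ and, in the smooth case, $h_\ast\mu_g\propto\mu_f$ — which is exactly what is assumed here. The only gap between the two statements is the connectedness requirement on $U$ and on $U\cap C_g$: in Corollary \ref{corpoly} we allow an arbitrary connected open $U$ with no constraint on $U\cap C_g$. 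So the first real step is to explain why $U\cap C_g$ may be taken connected. If $\sJ_g$ is not contained in a circle, $C_g=\P^1(\C)$ and there is nothing to check. If $\sJ_g$ is contained in a circle $C_g$, then I would invoke \cite[Theorem 4.1]{dujardin2022two}: for a polynomial with disconnected Julia set, or with connected and locally connected Julia set, being contained in a circle forces $g$ to be (conjugate to) a monomial or a Chebyshev map — hence exceptional. Combined with the hypothesis that $g$ is non-exceptional, this rules out the circle case, so $C_g=\P^1(\C)$ and the connectedness of $U\cap C_g$ is free.

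Next I would shrink $U$ if necessary. Theorem \ref{thmmeasureversion} is a local statement, and its conclusion — the existence of an irreducible algebraic curve $Z$, preperiodic under $(f^a,g^b)$, containing the graph of $h$ over $U$ — only gets stronger as $U$ grows, since an irreducible curve containing the graph over a subdisk containing the graph over all of $U$ by analytic continuation (the graph of $h$ is connected and irreducible, and an algebraic curve containing a nonempty open piece of it contains all of it). Concretely: pick any point $p\in U\cap\sJ_g$, choose a small connected open neighborhood $U'\subseteq U$ of $p$ with $U'\cap\sJ_g\neq\emptyset$; then $U'\cap C_g=U'$ is connected. Apply the holomorphic case of Theorem \ref{thmmeasureversion} to $f$, $g$, $U'$ and $h|_{U'}$ to produce $a$, $b$ and an irreducible algebraic curve $Z\subset\P^1(\C)\times\P^1(\C)$, preperiodic under $(f^a,g^b)$, with $\{(h(x),x):x\in U'\}\subseteq Z$. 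Finally, since $Z$ is a (one-dimensional) algebraic curve and the analytic graph $\Gamma_h=\{(h(x),x):x\in U\}$ is irreducible, the inclusion $\Gamma_h\cap(\text{open})\subseteq Z$ propagates: $\Gamma_h\subseteq Z^{\an}$, so $Z$ contains the graph of $h$ over all of $U$. The integers $a$, $b$ are the ones predicted, e.g. $(a,b)=(\mathrm{lcm}\{\deg f,\deg g\}/\deg f,\ \mathrm{lcm}\{\deg f,\deg g\}/\deg g)$ by \cite[Proposition 3.14]{Xie2022}.

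The only genuine content beyond bookkeeping is the first step, and the main obstacle is making the appeal to \cite[Theorem 4.1]{dujardin2022two} precise: one must check that that theorem indeed classifies polynomials (in the two listed families) whose Julia set lies in a circle as exactly the exceptional ones, so that the non-exceptional hypothesis is genuinely being used to force $C_g=\P^1(\C)$. If for some reason one did not want to rule out the circle case, an alternative would be to keep $C_g$ a circle and work with $U\cap C_g=U\cap C_g$ directly, noting that for $U$ a disk meeting the circle transversally $U\cap C_g$ is an arc, hence connected, which again removes the obstruction; but the cleanest route is the one above, since it also guarantees $Z$ contains the full two-dimensional-neighborhood graph of $h$ rather than just its restriction to $C_f$. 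Everything else — the verification of the Lyapunov exponent positivity and the non-atomicity of $\mu_f,\mu_g$, and the analytic-continuation argument extending $Z\supseteq\Gamma_h|_{U'}$ to $Z\supseteq\Gamma_h|_U$ — is routine.
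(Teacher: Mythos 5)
There is a genuine gap, and it sits exactly where the corollary's hypotheses (1)/(2) are supposed to do their work. When you take $\mu=\mu_f$ and $\nu=\mu_g$, hypothesis (ii) of Theorem \ref{thmmeasureversion} (equivalently of Theorem \ref{measurerigidity}) demands that $h_\ast(\mu_g)$ be \emph{equivalent} to $\mu_f$ on $h(U)$, in all cases, not merely the proportionality assumption in the smooth case. This is not automatic from $h(U\cap\sJ_g)=h(U)\cap\sJ_f$: the maximal entropy measure of a polynomial is the harmonic measure of the basin of infinity, and a local biholomorphism matching the Julia sets need not a priori transport this measure class (it could, for instance, interchange pieces of the basin of infinity with bounded Fatou components). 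Establishing $h_\ast(\mu_g)\sim\mu_f$ for polynomials is precisely the content of \cite[Theorem 4.1]{dujardin2022two}, and it is exactly here that the dichotomy ``$\sJ_g$ disconnected'' or ``$\sJ_g$ connected and locally connected'' is used. The intended proof of the corollary is: invoke \cite[Theorem 4.1]{dujardin2022two} under (1)/(2) to verify hypothesis (ii), then apply the holomorphic case of Theorem \ref{measurerigidity} with $\mu=\mu_f$, $\nu=\mu_g$. Your proposal declares (ii) ``automatic'' and never verifies it, so the application of the theorem is unjustified; nothing in your argument uses (1)/(2) for the purpose they are actually needed for.

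Moreover, the role you assign to \cite[Theorem 4.1]{dujardin2022two} --- that under (1)/(2) a Julia set contained in a circle forces $g$ to be exceptional, hence $C_g=\P^1(\C)$ --- is both a misattribution and false as a mathematical claim: $z\mapsto z^2+c$ with real $c<-2$ has a disconnected (Cantor) Julia set contained in $\R\cup\{\infty\}$ and is non-exceptional. Fortunately that step is also unnecessary: the holomorphic statement Theorem \ref{measurerigidity} only requires $U$ connected with $U\cap\Supp\nu\neq\emptyset$ (no condition on $U\cap C_g$), and even arguing from the introduction's formulation one can shrink $U$ to a small disk meeting $\sJ_g$, where $U\cap C_g$ is an arc, as you note in your alternative. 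Your shrinking-plus-analytic-continuation step (an irreducible algebraic curve containing the graph of $h$ over a subdisk contains it over all of $U$) is fine, as is the verification that $\mu_f,\mu_g$ are non-atomic, ergodic, with positive Lyapunov exponent. The missing piece is solely, but crucially, the measure equivalence $h_\ast(\mu_g)\sim\mu_f$.
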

\begin{rem}
Corollary \ref{corpoly} was proved by Luo \cite{luo2021inhomogeneity} in the case where $f$ and $g$ are polynomials with the same degree and $\sJ_g$ is connected and locally connected, and it was proved by Dujardin-Favre-Gauthier \cite{dujardin2022two} in the case $g$ is a Topological Collet-Eckmann polynomial (Topological Collet-Eckmann maps automatically satisfy (1) or (2)).
\end{rem}

\medskip

Our another aim is to improve local conjugacies to  algebraic correspondences.
For instance, we prove the rigidity of marked length spectrum of real rational maps. See Section \ref{4} for details.
\begin{thm}\label{interval}
	Let $f,g$ be two endomorphisms on $\P^1(\C)$ of degree at least $2$ defined over $\R$ such that $f$ is non-exceptional.  Let $N_f$ (resp. $N_g$) be $\R\cup\{\infty\}$ or be a compact $f$-invariant (resp. $g$-invariant) interval  contained in $\R\cup\{\infty\}$. Assume 
	\begin{points}
		\item there exists a homeomorphism $h:N_f\to N_g$ such that $h\circ f=g\circ h$ on $N_f$;
		\item $f|_{N_f}$ has positive topological entropy and for every periodic point $x\in N_f$ we have $|df^n(x)|=|dg^n(h(x))|$, where $n$ is the period of $x$.
	\end{points}
	Then  there exists an irreducible algebraic curve $\Gamma\subset \P^1(\C)\times \P^1(\C)$ which is periodic under $(f,g)$, moreover the intersection of $V$ and the graph of $h$ contains a Cantor set.
\end{thm}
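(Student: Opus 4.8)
The plan is to apply Theorem \ref{thmmeasureversion} (holomorphic case) after manufacturing, inside $\sJ_f$, a repelling periodic point of some iterate $f^N$ together with a germ of biholomorphic conjugacy through it that agrees with $h$ along a Cantor set. \emph{Step 1 (a horseshoe for $f$).} Since $f$ has real coefficients, $f|_{N_f}$ is a real-analytic, in particular $C^2$, self-map of the compact interval or circle $N_f\subset\P^1(\C)$. As it has positive topological entropy, classical one-dimensional dynamics (Misiurewicz's theorem, together with Ma\~n\'e's hyperbolicity criterion to guarantee uniform expansion once the finitely many critical points are excluded) yields an integer $N\geq1$ and a compact $f^N$-invariant Cantor set $\Lambda_f\subset N_f$, disjoint both from the critical points of $f$ and from the finitely many points $h^{-1}(\{\text{critical points of }g\text{ in }N_g\})$, on which $f^N$ is uniformly expanding and topologically conjugate to a one-sided mixing subshift of finite type on at least two symbols. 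Repelling periodic points of $f$ are dense in $\Lambda_f$, so $\Lambda_f\subset\sJ_f$.

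\emph{Step 2 (transporting expansion, and rigidity).} I would set $\Lambda_g:=h(\Lambda_f)\subset N_g$; then $g^N|_{\Lambda_g}$ is conjugate via $h$ to the same subshift, and by the choice of $\Lambda_f$ the set $\Lambda_g$ avoids the critical points of $g$. For a periodic point $x\in\Lambda_f$ of $f$-period $n$, hypothesis (ii) gives $|df^n(x)|=|dg^n(h(x))|$; raising to the power $Nk/n$, where $k$ is the $f^N$-period of $x$, shows that the multiplier of every periodic orbit of $g^N|_{\Lambda_g}$ equals in modulus that of the corresponding orbit of $f^N|_{\Lambda_f}$, hence is $>1$. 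Since $\Lambda_g$ contains no critical point of $g$ and no non-repelling periodic orbit of $g^N$, Ma\~n\'e's criterion makes $g^N|_{\Lambda_g}$ uniformly expanding. Thus $\Lambda_f$ and $\Lambda_g$ are real-analytic expanding Cantor repellers, $h$ conjugates $f^N|_{\Lambda_f}$ to $g^N|_{\Lambda_g}$, and $h$ preserves the multiplier spectrum. The crucial point — and the main obstacle — is to deduce that $h|_{\Lambda_f}$ is the restriction of a biholomorphism. Concretely, picking a fixed point $p\in\Lambda_f$ of $f^N$, putting $q:=h(p)$, and replacing $N$ by $2N$ so that the real numbers $df^N(p)$ and $dg^N(q)$ (equal in modulus) become equal, the K{\oe}nigs linearizations $\phi_f$ at $p$ and $\phi_g$ at $q$ produce a biholomorphism $H:=\phi_g^{-1}\circ\phi_f$ of neighbourhoods of $p$ and $q$ with $H\circ f^N=g^N\circ H$; the identity $H=h$ on $\Lambda_f$ near $p$ is the rigidity of real-analytic (equivalently, conformal) expanding repellers: Liv\v{s}ic's theorem converts the equality of periodic multipliers into a H{\"o}lder cohomology $\log|dg^N|\circ h=\log|df^N|+u\circ f^N-u$ on $\Lambda_f$, which, combined with Koebe distortion estimates, forces the two a priori different conjugacies to coincide (compare the arguments of \cite{dujardin2022two} and \cite{ji2022homoclinic}).

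\emph{Step 3 (algebraization).} I would then shrink a connected neighbourhood $U\ni p$ so that $H\colon U\to H(U)$ is biholomorphic with $H\circ f^N=g^N\circ H$ and $U\cap C_f$ connected. As a local conjugacy between $f^N$ and $g^N$, the map $H$ sends Julia set to Julia set, $H(U\cap\sJ_f)=H(U)\cap\sJ_g$, and transports maximal entropy measures, $H_\ast\mu_f=\mu_g$ on $H(U)$; moreover $\mu_f,\mu_g$ are non-atomic, ergodic, invariant, of strictly positive Lyapunov exponent. Since $f$ is non-exceptional, the holomorphic case of Theorem \ref{thmmeasureversion}, applied to the pair $(g,f)$ (i.e.\ with the two maps exchanged), to the measures $\mu_g,\mu_f$ and to $H$, produces an irreducible algebraic curve $\Gamma\subset\P^1(\C)\times\P^1(\C)$ containing the germ $\{(x,H(x)):x\in U\}$ (after interchanging the two factors to match our conventions). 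This germ is invariant under $(f^N,g^N)$, because $(f^N,g^N)(x,H(x))=(f^Nx,H(f^Nx))$; hence $(f^N,g^N)\Gamma$ is an irreducible curve meeting $\Gamma$ in infinitely many points, so $(f^N,g^N)\Gamma=\Gamma$ and $\Gamma$ is periodic under $(f,g)$. Finally, for a small $U'\subset U$ one has $\Gamma\supset\{(x,H(x)):x\in U'\}\supset\{(x,h(x)):x\in\Lambda_f\cap U'\}$, and $\Lambda_f\cap U'$, being a non-empty clopen subset of the Cantor set $\Lambda_f$, is itself a Cantor set — so $\Gamma$ meets the graph of $h$ in a Cantor set, as required.

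The step I expect to be hardest is the rigidity in Step 2: upgrading the purely topological conjugacy $h|_{\Lambda_f}$ to a germ of biholomorphism using nothing but the equality of periodic multipliers. Everything else — the horseshoe, Ma\~n\'e's criterion, K{\oe}nigs linearization, Liv\v{s}ic's theorem, and Theorem \ref{thmmeasureversion} invoked as a black box — is either standard or already in place; as an alternative to Theorem \ref{thmmeasureversion} in Step 3, one could instead feed $H$ directly into the homoclinic-orbit method of \cite{ji2022homoclinic}.
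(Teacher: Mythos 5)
Your overall strategy (horseshoe, multiplier rigidity, algebraization) parallels the paper, but two of your three steps have genuine gaps. The most serious is Step 3: you verify the hypotheses of Theorem \ref{thmmeasureversion} by asserting that the germ $H$, being a local conjugacy between $f^N$ and $g^N$, ``sends Julia set to Julia set'' and satisfies $H_\ast\mu_f=\mu_g$. Both claims are false in general: by K{\oe}nigs' theorem any two repelling fixed points with the same multiplier are locally holomorphically conjugate, regardless of what the (global) Julia sets or maximal entropy measures look like near them, so a local conjugacy carries no information about $\sJ_f$, $\sJ_g$ or $\mu_f$, $\mu_g$ beyond the invariant Cantor set itself. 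Hence conditions (i)--(ii) of Theorem \ref{thmmeasureversion} are unverified and your algebraization collapses. The paper does not use Theorem \ref{thmmeasureversion} here at all: it applies Theorem \ref{extendcer}, which rests on the generalized Inou theorem (Theorem \ref{inou}) and needs only the relation $\tilde h\circ f^m=g^m\circ \tilde h$ on a neighbourhood of the CER, exploiting topological exactness to create a non-injective restriction with a repelling fixed point --- no Julia-set or measure hypotheses are required. That is the tool you should have invoked.

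Step 2 also has a gap you flag but do not close, and as stated it is false: equality of the moduli of periodic multipliers does \emph{not} force a topological conjugacy between expanding Cantor repellers to be conformal when the repellers are linear (two affine Cantor sets with the same contraction ratios but different gap geometry give a multiplier-preserving conjugacy with no conformal extension). Sullivan's rigidity needs non-linearity, and this is precisely where the hypothesis that $f$ is non-exceptional must enter, via the classification of linear CERs in \cite{ji2022homoclinic}; the paper packages exactly this as Theorem \ref{cerrigidity}, stated in the same section, which you could have cited instead of sketching Liv\v{s}ic-plus-Koebe. In addition, your appeal to ``Ma\~n\'e's criterion'' to make $g^N|_{\Lambda_g}$ expanding is misquoted (for the rational-map version one must also avoid $\omega$-limit sets of recurrent critical points; the real $C^2$ version needs absence of wandering intervals and basins), and you do not address why $h(\Lambda_f)$ is a repeller in $\P^1(\C)$ rather than merely in the interval. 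The paper sidesteps both issues by building the $g$-horseshoe by hand near the repelling fixed point $h(x)$ (uniform expansion by construction, local maximality built in), pulling it back by $h^{-1}$ into the Katok repeller of $f$, and then using Lemma \ref{openrepeller} to upgrade the one-dimensional repellers to CERs in $\P^1(\C)$.
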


Theorem \ref{interval} is closely connected to results in one-dimensional real dynamics by Shub-Sullivan \cite{shub1985expanding}, Martens-de Melo \cite{martens1999multipliers} and Li-Shen \cite{li2006smooth}, where they proved marked length spectrum rigidity for smooth expanding circle maps and for smooth unimodal maps on the interval without periodic or Cantor attractors. 
\par In Section \ref{4}  we also get rigidity results for conformal expanding repeller (CER) (c.f. Theorem \ref{cerrigidity}) and  horseshoe (c.f. Theorem \ref{thmrealanlyhos}).

\subsection{Sketch of the proof}
An important idea in our proofs of Theorem \ref{thmmeasureversion} and \ref{conicalrigiditytoget}
is to study the dynamics at a point $x$ which is ``general" in a certain sense.  This idea has been already used by Levin-Przytycki in their classification of rational maps with identical maximal entropy measures \cite{levin1997two}, and this idea is also crucial in the authors'  proof of the Dynamical Andr\'e-Oort conjecture for curves \cite{ji2023dao}. On the other hand, the strategy in \cite{dujardin2022two} is rather different. They worked on preperiodic points  and the proof involves entire curves and positive currents.

\medskip

Let $f,g$ be two endomorphisms on $\P^1(\C)$ of degree at least $2$.  Assume that one of them is non-exceptional. 
Let $U\subset \P^1(\C)$ be a connected open subset such that 
$U\cap \, C_g$ is connected and $U\cap \sJ_g\neq \emptyset$. 
Let $h:U\to h(U)\subseteq \P^1(\C)$ be a (holomorphic, real analytic, $C^1$) diffeomorphism such that $h(U\cap \sJ_g)=h(U)\cap \sJ_f$; if $\sJ_f$ is smooth, we assume further that $h_\ast (\mu_g)\propto \mu_f$ on $h(U)$.

\medskip

If one of $f,g$ is exceptional, then $\sJ_f$ and $\sJ_g$ are smooth. 
Then by \cite[Theorem 1]{zdunik1990parabolic}, both of them are exceptional. 
So both $f$ and $g$ are non-exceptional.

\medskip

We define a point in $U\cap \sJ_g$ to be \emph{bi-conical} for $(g,h,f)$ if 
 it is not $g-$preperiodic,  and there are positive constants $r,R,K$ and two sequences of positive integers $n_j\to +\infty$, $m_j\to +\infty$, $j\geq 1$ such that 
\begin{points}
\item $g^{n_j}:W_j\to B(g^{n_j}(x),r)$ is injective and $W_j\subset U$, where $W_j$ is the connected component of $g^{-n_j}(B(g^{n_j}(x),r))$ containing $x$;
	\item the map $h_j:=f^{m_j}\circ h\circ g_{n_j}:B(g^{n_j}(x),r)\to \P^1(\C)$  is injective  and satisfies 
	\begin{equation*}
		B(f^{m_j}(h(x)),R/K)\subset h_j(B(g^{n_j}(x),r))\subset B(f^{m_j}(h(x)),R),
	\end{equation*}
	where $g_{n_j}$ is the inverse map of $g^{n_j}:W_j\to B(g^{n_j}(x),r)$.
\end{points}

\medskip

Our first step is to show the existence of bi-conical points for $(g,h,f)$. 
Under condition (ii) of Theorem \ref{conicalrigiditytoget},  this is easy to show
via the constructions of CERs. Under condition (ii) of Theorem \ref{thmmeasureversion}, the proof is more delicate. 
In Lemma \ref{key}, we show that $\nu$-a.e. point in $U$ is bi-conical for $(g,h,f)$ via ergodic theory.
Moreover, the set of times $G=\{n_i, i\geq 0\}$ can be asked to have positive lower density. This is done in Section \ref{3}.

\medskip

We first prove the holomorphic case in Section \ref{3}, which is the foundation of the real analytic and the $C^1$ case.
Assume the existence of a bi-conical point $x$ for $(g,h,f)$. By Montel's theorem, we can construct a normal family $\{h_i: D\to \P^1(\C)\}$ of injective holomorphic maps on some disk $D$ with $D\cap \sJ_g\neq \emptyset$ and satisfy condition (i) of Theorem \ref{thmmeasureversion} (and Theorem \ref{conicalrigiditytoget}).  Moreover, every limit of this family is non-constant. Applying Levin's result \cite[Theorem 1]{Levin1990} (see also Theorem \ref{thmlevin}), we construct two horseshoes in $\sJ_g$ and $\sJ_f$ respectively and a holomorphic
conjugacy between them. Applying a generalized form (c.f. Theorem \ref{inou1}) of a theorem of Inou \cite[Theorem 2]{inou2011extending} based on Eremenko's theorem \cite[Theorem 2]{eremenko1989some} (c.f. Theorem \ref{eremenko}), we show that the conjugacy can be improved to an algebraic correspondence. This implies the holomorphic case of Theorem \ref{thmmeasureversion} and \ref{conicalrigiditytoget}. Generalized Inou's theorem is discussed in Section \ref{2}.

\medskip

There are three ingredients in the above argument which not apply to the 
real analytic case directly. The first one is Montel's theorem.  We replace it by Lemma \ref{lemconenormal}.
The second one is Levin's result \cite[Theorem 1]{Levin1990}. For this, we replace it by Theorem \ref{thmlevinrealan}, which generalizes \cite[Theorem 1]{Levin1990} to the real analytic setting. The third one is Theorem \ref{inou1}. We replace it by our rigidity result for horseshoes (c.f. Theorem \ref{thmrealanlyhos}). This is done in  Section \ref{5} and \ref{6}.

\medskip

To prove the $C^1$ case, our idea is to reduce it to the real analytic case. From a bi-conical point $x$ for $(g,h,f)$, we construct a sequence $\{h_i: D\to \P^1(\C)\}$ of injective $C^1$-maps on some disk $D$ with $D\cap \sJ_g\neq \emptyset$ converging to a real analytic map $H$ which 
satisfyies condition (i) of Theorem \ref{thmmeasureversion} (and Theorem \ref{conicalrigiditytoget}).
This reduces the $C^1$ case of Theorem \ref{conicalrigiditytoget} to the real analytic case. 
The $C^1$ case of Theorem \ref{thmmeasureversion} is more delicate, since condition (ii) of  Theorem \ref{thmmeasureversion} may not be satisfied by $H.$
To solve this problem, we need the fact that the times set $G=\{n_i, i\geq 0\}$ has positive lower density.  We construct the sequence $\{h_i: D\to \P^1(\C)\}$ based on a chosen subsequence $n_{i_j}, j\geq 0$ of $G.$ One show that, under this choice, $g^{n_{i,j}}(x)$ converges to a point $p\in D$ which is bi-conical for $(g,H,f)$ (c.f. Lemma \ref{lempbicon}).
Then the argument in the real analytic case can be applied to $(p,g,H,f)$, which  concludes the proof.
This is done in Section \ref{7}. 

\subsection*{Acknowledgement}
We would like to thank Donyi Wei for his proof of Lemma \ref{lemconenormal}.
\par The first-named author would like to thank Beijing International Center for Mathematical Research in Peking University for the invitation.  The
first named author Zhuchao Ji is supported by ZPNSF Grant (No. XHD24A0201) and by NSFC Grant (No. 12401106). The second-named author Junyi Xie is supported by NSFC Grant (No.12271007).

	\section{A generalized Inou's theorem}\label{2}
	The aim of this section is to prove the following theorem, which will be used in the proof of Theorem \ref{measurerigidity}.  
	\begin{thm}\label{inou}
	Let $f,g$ be two non-Latt\`es endomorphisms on $\P^1(\C)$ of degree at least $2$. Assume that
	\begin{points}
		\item there exists a connected open set $U\subset \P^1(\C)$ and an open subset $U'\subset U$ such that $f(U')\subset  U$, $f:U'\to U$ is non-injective, and $f:U'\to U$ has a repelling fixed point;
		\item there exists a non-constant holomorphic map $h:U\to \P^1(\C)$ such that $h\circ f=g\circ h$ on $U'$.
	\end{points}
Then there exists an irreducible algebraic curve $\Gamma\subset \P^1(\C)\times \P^1(\C)$ which is invariant under $(f,g)$ and contains the graph of $h$.
	\end{thm}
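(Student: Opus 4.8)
The plan is to first produce, from the hypotheses, a conjugacy between two horseshoes and then rigidify it via Eremenko's theorem. Starting from the repelling fixed point $p$ of $f:U'\to U$ together with the non-injectivity of $f:U'\to U$, I would first use Levin's theorem (the holomorphic fixed-point index/bifurcation statement, Theorem \ref{thmlevin} in the present paper, i.e. \cite[Theorem 1]{Levin1990}) to locate, after passing to a high iterate $f^N$, a horseshoe: a compact set $K\subset U'$ on which $f^N$ is conjugate to a full one-sided shift on finitely many symbols, with $K$ contained in a small ball $D$ around (a forward image of) $p$ where $h$ is defined, non-constant, and locally injective away from a discrete critical set. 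Shrinking $D$ if necessary so that $h|_D$ is injective and $h(D)$ avoids the postcritical issues, the functional equation $h\circ f=g\circ h$ on $U'$ transports this horseshoe: $h(K)\subset \P^1(\C)$ is invariant under $g^N$ and $h|_K$ conjugates $f^N|_K$ to $g^N|_{h(K)}$. At this point I have a holomorphic conjugacy between a horseshoe for $f$ and a horseshoe for $g$, together with the fact (from $h\circ f = g\circ h$) that this conjugacy extends to a holomorphic map on the open set $U$ intertwining $f$ and $g$.

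Next I would feed this into a rigidity statement for horseshoes to upgrade the conjugacy to an algebraic correspondence. Concretely, Eremenko's theorem (\cite[Theorem 2]{eremenko1989some}, stated here as Theorem \ref{eremenko}) and Inou's extension argument (\cite[Theorem 2]{inou2011extending}) combine to say: if a holomorphic map defined near a repelling periodic point semiconjugates $f$ to $g$ in the sense above and has a ``dynamically large" image — which is guaranteed because $f:U'\to U$ is non-injective, so the grand orbit of the repelling point is not exceptional and the forward images of $D$ under $f$ eventually cover all of $\P^1(\C)$ minus a finite exceptional set — then $h$ is the restriction of an algebraic correspondence. More precisely, one considers the Zariski closure $\Gamma$ in $\P^1(\C)\times\P^1(\C)$ of the graph $\{(x,h(x)) : x\in U\}$; it is an algebraic curve because the functional equation $h\circ f = g\circ h$, iterated using the non-injectivity to spread $h$ over a dense-in-the-analytic-Zariski-topology set, forces the graph to satisfy a polynomial relation (this is exactly the content of the generalized Inou theorem, whose proof via Eremenko I would mirror). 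Irreducibility is arranged by taking the component of $\Gamma$ through the graph of $h$ near $p$, and invariance under $(f,g)$ is immediate since $\Gamma$ contains the graph of $h$ and $(f,g)$ maps that graph into itself by the functional equation, hence maps $\Gamma$ to a curve containing a Zariski-dense subset of $\Gamma$.

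The main obstacle I anticipate is the passage from ``holomorphic semiconjugacy near a repelling point, with non-injective $f:U'\to U$" to ``algebraic correspondence" — that is, making the generalized Inou/Eremenko machinery actually apply. Two points need care. First, one must ensure the horseshoe produced by Levin's theorem sits inside the domain $U'$ and inside a disk on which $h$ is injective; since $h$ is non-constant its critical points are isolated, so shrinking the disk around a suitable forward iterate of $p$ (which is still repelling, with non-injective return map, because these are preserved under iteration and the non-injectivity propagates) handles this, but the bookkeeping of ``pass to $f^N$, track $U'$, track the repelling point, track injectivity of $h$" is where the technical weight lies. Second, one must verify the non-exceptionality hypothesis feeding into Eremenko's theorem: the assumption that $f:U'\to U$ is non-injective is precisely what rules out $h$ being (locally) a covering onto a ``small" invariant set and forces the image dynamics to be all of $\P^1(\C)$; I would extract this by noting that non-injectivity of $f$ near $p$ means $\deg$ of the local return map is $\geq 2$, so the backward orbit of $p$ under $f$ accumulates everywhere outside the exceptional set, and $h$ transports this. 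The non-Latt\`es hypothesis on $f$ and $g$ is what makes the relevant rigidity (Eremenko-type) theorems available without the extra flexibility that Latt\`es examples would introduce. Once these are in place, the conclusion that $\Gamma$ is algebraic, irreducible, $(f,g)$-invariant, and contains the graph of $h$ follows formally.
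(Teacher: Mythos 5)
Your proposal has a genuine gap at its central step: you never actually prove that the graph of $h$ lies on an algebraic curve. You take the Zariski closure of the graph and assert it is a curve ``because the functional equation \dots forces the graph to satisfy a polynomial relation (this is exactly the content of the generalized Inou theorem, whose proof via Eremenko I would mirror)." But the statement you are asked to prove \emph{is} the generalized Inou theorem, so this is circular; a priori the Zariski closure of the graph of an arbitrary holomorphic $h$ on an open set is all of $\P^1(\C)\times\P^1(\C)$, and neither the horseshoe you build nor the density of backward orbits of the repelling point gives any polynomial relation. The actual mechanism in the paper is different and is the part you are missing: one applies Inou's construction (Theorem \ref{inou1}) to produce a Riemann surface $X$, a holomorphic self-map $G:X\to X$ with fibers of bounded cardinality, and $\pi:X\to\P^1(\C)\times\P^1(\C)$ with $\pi(X)=\Gamma:=\bigcup_{n\ge 0}(f,g)^n(\Gamma_0)$ and $\pi\circ G=(f,g)\circ\pi$. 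Hypothesis (i) (repelling fixed point plus non-injectivity of $f:U'\to U$) transfers to $G$ and rules out $X$ hyperbolic, so by uniformization $X\cong\P^1(\C)$, a torus, $\C$ or $\C^\ast$; in the compact cases $\phi_1,\phi_2$ are automatically finite, and in the cases $\C$, $\C^\ast$ one applies Eremenko's theorem together with the non-Latt\`es hypothesis to exclude an essential singularity of $\phi_1,\phi_2$ at infinity, forcing them to be rational and hence $\Gamma$ algebraic. Without this (or an equivalent) argument, the jump from ``holomorphic semiconjugacy on an open set" to ``algebraic correspondence" is exactly the unproved content.

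Two smaller but real problems: Levin's theorem as used in this paper (Theorem \ref{thmlevin}) is a finiteness statement for families of local symmetries of the Julia set; it is not a horseshoe-construction device, it plays no role in the proof of Theorem \ref{inou}, and the horseshoe-plus-rigidity route you sketch is the strategy for the \emph{real analytic} case (Theorem \ref{thmrealanlyhos}), whose proof itself relies on Theorem \ref{inou} and on Sullivan's rigidity for CERs --- so following that route here would again be circular. Also note that injectivity of $h$ is neither assumed nor needed in Theorem \ref{inou} (only non-constancy), so shrinking disks to make $h$ injective is beside the point. Your final remarks on irreducibility and $(f,g)$-invariance would be fine once algebraicity of a curve containing the graph is established, but that is precisely the step that must be supplied.
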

	
In \cite{inou2011extending}, Inou proved the above result when $f:U'\to U$ is polynomial-like. His proof indeed works in the general case. 
We give a proof of Theorem \ref{inou} for the convenience of the readers.
The main ingredients of the proof are Inou's construction \cite[Theorem 2]{inou2011extending} and Eremenko's theorem \cite[Theorem 2]{eremenko1989some} as follows:

\begin{thm}[Inou]\label{inou1}
	Let $f,g$ be two endomorphisms on $\P^1(\C)$. Let $U\subset \P^1(\C)$ be an open set. Let $h:U\to \P^1(\C)$ be a non-constant holomorphic map. Let $\Gamma_0:=\left\{(z,h(z)):z\in U\right\}\subset \P^1(\C)\times \P^1(\C)$ be the graph of $h$. Consider the forward invariant  set 
	\begin{equation*}
		\Gamma:=\bigcup_{n\geq 0} F^n(\Gamma_0)
	\end{equation*}
 of the product map $F:=(f,g)$.  Then there exists a Riemann surface $X$ and holomorphic maps $G:X\to X$ and $\pi:X\to \P^1(\C)\times \P^1(\C)$ such that $\pi(X)=\Gamma$, $\pi\circ G=F\circ \pi$ on $X$ and has the following properties:
	\begin{points}
	\item Let $\phi_i:=p_i\circ \pi$, where $p_i:\P^1(\C)\times \P^1(\C)\to \P^1(\C)$ is the projection to the $i$-th  coordinate. Then there exists an open subset $V\subset X$ such that $\pi(V)=\Gamma_0$ and $\phi_1:V\to U$ is biholomorphic.
	\item The  cardinality of the preimages of a point in $X$ by $G$ is not greater than $\deg f\cdot \deg g$.
	\item If $U$ is connected and there exists $U'\subset U$ such that $f(U')\subset U$, and $h\circ f=g\circ h$ on $U'$, then $X$ is connected, $\phi_1\circ G=f\circ \phi_1$ on $X$ and  $\phi_2\circ G=g\circ \phi_2$ on $X$.
\end{points}
\end{thm}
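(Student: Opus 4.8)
The plan is to construct $X$ as the space of branches of $\Gamma$ reachable from $\Gamma_0$ — a dynamical analogue of Weierstrass's construction of the Riemann surface of an analytic germ. Write $F=(f,g)$ and $d=\deg f\cdot\deg g$, so that $F\colon\P^1(\C)\times\P^1(\C)\to\P^1(\C)\times\P^1(\C)$ is a finite surjective morphism of degree $d$ with $p_1\circ F=f\circ p_1$ and $p_2\circ F=g\circ p_2$. Call a germ $\beta$ of an irreducible analytic curve at a point $w\in\P^1(\C)\times\P^1(\C)$ \emph{reachable} if $\beta=F^n_\ast\gamma$ for some $n\ge 0$ and some germ $\gamma$ of the smooth curve $\Gamma_0$, where $F^n_\ast\gamma$ denotes the germ of the image curve $F^n(\gamma)$ at the image of the center of $\gamma$ (an irreducible curve germ, as $F^n$ is nonconstant on $\gamma$). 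Put $X:=\{(w,\beta):\beta\text{ a reachable germ at }w\}$, with $\pi(w,\beta):=w$, $\phi_i:=p_i\circ\pi$, and $G(w,\beta):=(F(w),F_\ast\beta)$; the last is well defined since $F_\ast(F^n_\ast\gamma)=F^{n+1}_\ast\gamma$ is again reachable.

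Next I would equip $X$ with the complex structure whose chart around $(w,\beta)$ comes from the normalization $(\Delta,0)$ of the germ $\beta$ — equivalently, $X$ is obtained by gluing the normalizations $X_n$ of the curves $\Gamma_n:=F^n(\Gamma_0)$ along the open loci where their branches agree. Granting this, $\pi(X)=\bigcup_{n\ge 0}F^n(\Gamma_0)=\Gamma$ and $\pi\circ G=F\circ\pi$ are immediate, hence $\phi_1\circ G=p_1\circ F\circ\pi=f\circ\phi_1$ and, likewise, $\phi_2\circ G=g\circ\phi_2$. Property (i) holds with $V:=\{(w,\gamma_w):w\in\Gamma_0\}$, $\gamma_w$ the germ of $\Gamma_0$ at $w$: it is open (a full $\Gamma_0$-chart around each of its points lies in it), $\pi(V)=\Gamma_0$, and $\phi_1|_V\colon V\to U$ is biholomorphic, being $\pi|_V$ (a biholomorphism onto $\Gamma_0$) followed by $p_1\colon\Gamma_0\to U$. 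For property (ii): $G^{-1}(w,\beta)$ consists of the pairs $(w',\beta')$ with $w'\in F^{-1}(w)$ and $F_\ast\beta'=\beta$, and over each $w'$ the number of irreducible germs that $F$ carries to $\beta$ is at most the local degree $e_{w'}$ of $F$ at $w'$; summing over the fibre, $\#G^{-1}(w,\beta)\le\sum_{w'\in F^{-1}(w)}e_{w'}=\deg F=d$.

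For (iii), assume $U$ connected and that there is a nonempty open $U'\subset U$ with $f(U')\subset U$ and $h\circ f=g\circ h$ on $U'$. Then $V\cong U$ is connected, and $X=\bigcup_{n\ge 0}G^n(V)$, because a point $(w,\beta)\in X$ with $\beta=F^n_\ast\gamma$ and $\gamma$ the germ of $\Gamma_0$ at $p$ equals $G^n(p,\gamma)\in G^n(V)$. The conjugacy chains these pieces together: for $z_0\in f(U')$ pick $z_1\in U'$ with $f(z_1)=z_0$; then $F(z_1,h(z_1))=(z_0,h(z_0))$, and using $h\circ f=g\circ h$ near $z_1$ one finds that $F_\ast$ of the germ of $\Gamma_0$ at $(z_1,h(z_1))$ is the germ of $\Gamma_0$ at $(z_0,h(z_0))$; hence $\bigl((z_0,h(z_0)),\gamma_{(z_0,h(z_0))}\bigr)\in V\cap G(V)$. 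So $V\cap G(V)\ne\emptyset$, and applying $G$ repeatedly, $G^n(V)\cap G^{n+1}(V)\ne\emptyset$ for all $n$; thus $X=V\cup G(V)\cup\cdots$ is a union of connected sets each meeting the next, hence connected, and $\phi_1\circ G=f\circ\phi_1$, $\phi_2\circ G=g\circ\phi_2$ are as already noted.

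The main obstacle — essentially the whole content — is the step granted above: showing that $X$, with the charts coming from normalizations of germs, is a Hausdorff, second-countable Riemann surface on which $G$ and $\pi$ are holomorphic. This means verifying that these charts are mutually compatible, handling the singular points of the $\Gamma_n$ and the ramification points of $F$ (where $F$ may identify several distinct germs, so $\pi$ fails to be locally injective there, although, as the count above shows, the bound in (ii) is unaffected), and ruling out non-Hausdorff ``limit branch'' phenomena. This is precisely Inou's construction in \cite[Theorem 2]{inou2011extending}, whose argument we follow.
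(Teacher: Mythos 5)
Your proposal is essentially the same route as the paper, which offers no proof of this statement at all but labels it as Inou's theorem and cites \cite[Theorem 2]{inou2011extending}; your space-of-reachable-germs construction (equivalently, gluing normalizations of the curves $F^n(\Gamma_0)$), together with your verifications of (i) via the graph germs, (ii) via local degrees summing to $\deg F=\deg f\cdot\deg g$, and (iii) via the chain $X=\bigcup_n G^n(V)$ with $V\cap G(V)\neq\emptyset$, is a faithful skeleton of that construction. The single step you grant — that this germ space is a Hausdorff Riemann surface on which $\pi$ and $G$ are holomorphic — is precisely the content the paper likewise delegates to Inou, so the two treatments coincide.
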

\medskip
\begin{thm}[Eremenko]\label{eremenko}
	Let $R,G$ be two endomorphisms on $\P^1(\C)$ such that $\deg G\geq 2$. Let $\phi$ be a meromorphic function defined on $\C$ or  $\C^\ast$ with an essential singularity at $\infty$. If $\phi$ satisfies $\phi\circ G=R\circ \phi$, then $R$ is a Latt\`es map and $G$ is exceptional of monomial type.

\end{thm}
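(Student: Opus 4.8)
The plan is to feed the hypotheses of the theorem into Inou's construction and then recognize the Riemann surface it produces as the normalization of an algebraic curve. Applying Theorem \ref{inou1} to $f,g,U,h$ (using that $U$ is connected, $f(U')\subset U$, and $h\circ f=g\circ h$ on $U'$) yields a connected Riemann surface $X$, holomorphic maps $G\colon X\to X$ and $\pi=(\phi_1,\phi_2)\colon X\to\P^1(\C)\times\P^1(\C)$ with $\pi(X)=\Gamma:=\bigcup_{n\ge0}F^n(\Gamma_0)$, where $F:=(f,g)$ and $\Gamma_0$ is the graph of $h$, such that $\pi\circ G=F\circ\pi$, $\phi_1\circ G=f\circ\phi_1$, $\phi_2\circ G=g\circ\phi_2$, together with an open $V\subset X$ with $\phi_1|_V\colon V\to U$ biholomorphic and $\pi(V)=\Gamma_0$; in particular $\pi|_V$ is a biholomorphism onto $\Gamma_0$, $\phi_2|_V=h\circ(\phi_1|_V)$, and $\phi_1,\phi_2$ are non-constant. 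Since $X$ is connected, $\Gamma=\pi(X)$ is irreducible and so is its Zariski closure $\overline\Gamma$; it then suffices to show $\overline\Gamma$ is a curve, i.e.\ $\overline\Gamma\ne\P^1(\C)\times\P^1(\C)$. Indeed, $\overline\Gamma$ contains $\Gamma_0$, which is the graph of $h$, and $F(\Gamma)\subset\Gamma$ forces $F(\overline\Gamma)\subset\overline\Gamma$, an equality since $F$ is a finite morphism and $\overline\Gamma$ an irreducible curve; thus $\overline\Gamma$ is the desired $(f,g)$-invariant curve. If $X$ is compact, then $\Gamma=\pi(X)$ is a compact analytic subset of the projective variety $\P^1(\C)\times\P^1(\C)$, hence algebraic by Chow, and $1$-dimensional because $\pi$ is non-constant, so we are done; assume therefore that $X$ is non-compact, so its universal cover is $\C$ or $\D$.

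Next I rule out the hyperbolic case, using the repelling fixed point. Let $p\in U'$ be the repelling fixed point of $f$; then $q:=h(p)$ is a fixed point of $g$ (by $h\circ f=g\circ h$), so $(p,q)$ is a fixed point of $F$ lying on $\Gamma_0=\pi(V)$, and $\tilde p:=(\pi|_V)^{-1}(p,q)\in V$. By Inou's construction (where $X$ is built as an increasing union of $U$-like pieces on the first of which $G$ is induced by $f$), $\tilde p$ is a fixed point of $G$, and reading $G$ in the chart $\phi_1|_V$ near $\tilde p$ shows that its multiplier there equals $f'(p)$, of modulus $>1$. If $X$ were hyperbolic, $G$ would lift to a holomorphic self-map of $\D$ fixing a lift of $\tilde p$ with the same multiplier, contradicting the Schwarz--Pick lemma. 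Hence $X$ is not hyperbolic; its universal cover is $\C$, and since the torus is compact, $X$ is biholomorphic to $\C$ or $\C^\ast$.

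Finally I treat $X\in\{\C,\C^\ast\}$. Here $\phi_1,\phi_2$ are meromorphic on $X$, and by part (ii) of Theorem \ref{inou1} the fibers of $G$ are uniformly bounded; a holomorphic self-map of $\C$ (resp.\ $\C^\ast$) with finite fibers must be a polynomial (resp.\ of the form $z\mapsto cz^{\pm n}$, $n\ne0$)---otherwise, by Picard's theorem, some fiber would be infinite---so $G$ extends to an endomorphism of $\P^1(\C)$. Moreover, using that $f\colon U'\to U$ is non-injective together with Inou's construction, $G$ is non-injective, so $\deg G\ge2$. Now if $\phi_1$ were not rational it would have an essential singularity at an end of $X$, and then from $\phi_1\circ G=f\circ\phi_1$ Eremenko's Theorem \ref{eremenko} would make $f$ a Latt\`es map, contrary to hypothesis; similarly $\phi_2$ is rational, otherwise $g$ would be Latt\`es. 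Thus $\phi_1$ and $\phi_2$ are rational, $\pi$ extends to a non-constant holomorphic map $\widehat\pi$ on the compactification $\widehat X\cong\P^1(\C)$, and $\Gamma\subset\widehat\pi(\widehat X)$, which is an algebraic curve. In all cases $\overline\Gamma$ is a curve, completing the proof.

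The step I expect to be the main obstacle is extracting the two construction-dependent inputs used above---that the repelling fixed point of $f$ lifts to a fixed point of $G$ of multiplier $>1$, and that $G$ is non-injective precisely because $f\colon U'\to U$ is---since neither follows from the black-box statement of Theorem \ref{inou1} but only from a close reading of how $X$ and $G$ are built from the forward orbit of $\Gamma_0$ (a fixed point of $F$ on $\Gamma_0$ lies in the overlap of consecutive levels, and the ramification of $f$ is already visible on the first level); this is exactly where the hypotheses that $f\colon U'\to U$ has a repelling fixed point and is non-injective are used. A secondary point of care is the dichotomy ``rational versus essential singularity'' for $\phi_1,\phi_2$ on $\C$ or $\C^\ast$ and the verification that, in the rational case, $\pi$ genuinely extends across the finitely many ends so that $\Gamma$ lands in an algebraic curve.
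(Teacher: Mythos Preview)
Your proposal does not prove the stated theorem. Theorem~\ref{eremenko} is Eremenko's result characterizing when a transcendental meromorphic $\phi$ on $\C$ or $\C^\ast$ can satisfy $\phi\circ G=R\circ\phi$; the paper does not prove it but quotes it from \cite{eremenko1989some}. What you have written is instead a proof of Theorem~\ref{inou} (the generalized Inou theorem), and indeed you \emph{invoke} Theorem~\ref{eremenko} as a black box in your final paragraph. So as a proof of the statement you were asked to establish, your argument is circular: it assumes its own conclusion.

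If your intention was really to prove Theorem~\ref{inou}, then your argument is essentially the paper's own proof, with only cosmetic differences: you split off the compact case and cite Chow, whereas the paper simply notes that for $X\cong\P^1(\C)$ or a torus the maps $\phi_i$ are finite morphisms; and you phrase the hyperbolic obstruction via Schwarz--Pick, whereas the paper just says a hyperbolic $X$ cannot carry a non-injective self-map with a repelling fixed point. Your two ``construction-dependent'' worries (that the repelling fixed point lifts to a repelling fixed point of $G$, and that $G$ is non-injective) are exactly the inputs the paper also extracts from assumptions (i)--(ii) together with the biholomorphism $\phi_1|_V\colon V\to U$ in Theorem~\ref{inou1}(i), so they are not extra obstacles. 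But none of this addresses Eremenko's theorem itself, whose proof (via Nevanlinna theory/value distribution) is of an entirely different nature and is not supplied in this paper.
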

\medskip

\par \proof[Proof of Theorem \ref{inou}]
We first  construct $\Gamma_0, \Gamma, X, G, \pi_i, \phi$ and $V$ as in Theorem \ref{inou1}.   We only need to show that $\Gamma=\pi(X)$ is an irreducible algebraic curve in $\P^1(\C)\times \P^1(\C)$. Once this holds, since $\Gamma$ contains the graph of $h$, it is invariant under $(f,g)$.
\par  By the assumptions (i) and (ii) of Theorem \ref{inou} and by Theorem \ref{inou1}, $X$ is connected, and $G:X\to X$ is non-injective and has a repelling fixed point. Hence $X$ can not be a hyperbolic Riemann surface. Therefore $X$ is biholomorphic to $\P^1(\C)$, a torus $\C/\La$, $\C$ or $\C^\ast$. 
\par In the first two cases $X$ is projective, hence $\phi_1$ and $\phi_2$ are finite morphisms, 
hence $\Gamma=\pi(X)$ is an irreducible algebraic curve in $\P^1(\C)\times \P^1(\C)$. In the case $X=\C$ or $\C^\ast$,  by Theorem \ref{inou1} (ii), $G$ is a polynomial map in the case $X=\C$, and $G(z)=z^m$ for some $m\in\Z$ in a suitable coordinate in the case $X=\C^\ast$.  
By assumption (ii) in Theorem \ref{inou}, since $f$ and $G$ are conjugated by $\phi_1$ on $U'$, $G$ is non-injective, i.e. $G$ has degree at least $2$.
By Theorem \ref{inou1} (iii), we have $\phi_1\circ G=f\circ \phi_1$ on $X$ and  $\phi_2\circ G=g\circ \phi_2$ on $X$.  Since $f$ and $g$ are not Latt\`es, by Theorem \ref{eremenko}, $\phi_1$ and $\phi_2$ must be finite morphisms. Hence $\Gamma=\pi(X)$ is an irreducible algebraic curve in $\P^1(\C)\times \P^1(\C)$. This finishes the proof.
\endproof

\section{Variants of Levin's theorem}\label{5}
Let $f$ be an endomorphism of $\P^1(\C)$ of degree at least $2$. 
We reformulate Levin's theorem \cite[Theorem 1]{Levin1990} in the following form. 
\begin{thm}\label{thmlevin}
Let $U$ be a connected open subset of $\P^1(\C)$ with $U\cap \sJ_f\neq \emptyset.$
Let $\sigma_n: U\to \P^1(\C), n\geq 0$ be a a family of injective holomorphic maps satisfying $\sigma_n\to \id$ as $n\to \infty$ and 
$\sigma_n^{-1}(\sJ_f)=\sJ_f\cap U$. If in additional $\sJ_f$ is smooth, we assume further that  $(\sigma_n)^\ast (\nu_f)\propto \mu_f|_U$.
Then either $f$ is exceptional or $\sigma_n=\id$ for $n$ sufficiently large.
\end{thm}

This theorem will be used in our proofs of the holomorphic case of Theorem \ref{thmmeasureversion} and Theorem \ref{conicalrigiditytoget}.
The aim of this section is to prove the following result which generalizes Theorem \ref{thmlevin} to real analytic setting.

\begin{thm}\label{thmlevinrealan}
Let $U$ be a open subset of $\P^1(\C)$ with $U\cap \sJ_f\neq \emptyset.$ Assume that $U\cap C_f$ is connected.
Let $\sigma_n: U\to \P^1(\C), n\geq 0$ be a family of injective real analytic maps satisfying $\sigma_n\to \id$ as $n\to \infty$ in $C^1$-topology and 
$\sigma_n^{-1}(\sJ_f)=\sJ_f\cap U$. If in additional $\sJ_f$ is $\P^1(\C)$, we assume further that  $(\sigma_n)^\ast (\mu_f)\propto \mu_f|_U$.
Then $f$ is exceptional or $\sigma_n|_{C_f}=\id$ for $n$ sufficiently large.
\end{thm}

Theorem \ref{thmlevinrealan} will be used in our proof of the real analytic and $C^1$ cases of Theorem \ref{thmmeasureversion} and Theorem \ref{conicalrigiditytoget}.
Our proof of Theorem \ref{thmlevinrealan}  follows Levin's original  strategy, but  more elementary in each step.
\subsection{Movable case}
As in Levin's original proof of Theorem \ref{thmlevin}, we first treat the case where there is a repelling periodic point $o$ which is not fixed by any $\sigma_n$. 
 In this step, we only need $\sigma_n, n\geq 0$ to be $C^1$. Our proof follows Levin's original strategy in the holomorphic case.

\begin{thm}\label{thmlevinnf}
Let $U$ be a connected open subset of $\P^1(\C)$ with $U\cap \sJ_f\neq \emptyset.$
Let $\sigma_n: U\to \P^1(\C), n\geq 0$ be a family of injective $C^1$-maps satisfying $\sigma_n\to \id$ as $n\to \infty$ in $C^1$-topology and 
$\sigma_n^{-1}(\sJ_f)=\sJ_f\cap U$. If in additional $\sJ_f$ is smooth, we assume further that  $(\sigma_n)^\ast (\mu_f)\propto \mu_f|_U$.
If there is a repelling fixed point $o$ of $f$ such that $\sigma_n(o)\neq o$ for every $n\geq 0$, then $f$ is exceptional.
\end{thm}

\proof
After shrinking $U$, we may ask $U$ to be a linearization domain of $o$. 
In a suitable coordinate $z$, we may let $U=\{|z|\leq 1\}$,   $o$ be the origin and $f$ is $z\mapsto \la z$ with $|\la|>1$. Write $\sigma_n=a_n+b_n(z)+\epsilon_n(z)$
where $b_n\in M_{2\times 2}(\R), \epsilon_n(0)=0$ and $d\epsilon_n(0)=0$. 
Our assumption shows that $a_n\neq 0$ for every $n\geq 0$, $a_n\to 0$, $b_n\to \id$ and $\epsilon_n\to 0$ in $C^1$-topology. 
After shrinking $U$, we may assume that there are sequences $c_n,d_n>0$ tending to $0$ such that 
$|\epsilon_n(z)|\leq c_n$ and $|d\epsilon_n(z)|\leq d_n$ on $U.$

\medskip

Let $U_1:=\{|z|<0.9\}\subseteq U.$  For every complex number $c$ with $|c|<0.1$, define $T_c: U_1\to U$ by $z\mapsto z+c.$
We say that $c$ is \emph{good}  if the following holds:
\begin{points}
\item[$\d$] $T_{c}^{-1}(\sJ_f)=\sJ_f\cap U_1$;
\item[$\d$] if in additional $\sJ_f$ is smooth, we have that  $$T_{c}^\ast (\mu_f)\propto \mu_f|_{U_1}.$$
\end{points}
It is clear that the set $\sG$ of good $c$ in $\{|c|<0.1\}$ forms a closed subset in $\{|c|<0.1\}$.
Moreover, for $c_1,c_2\in \C$ with $\max\{|c_1|, |c_2|, |c_1+c_2|\}<0.1$, if both $c_1$ and $c_2$ are good, $c_1+c_2$ is good.

\medskip

After taking subsequence, we may assume that there is a sequence $l_n\geq 0$ tending to $+\infty$ such that $a_n\la^{l_n}\to a$ and $|a|\in (0, 0.1).$ 
View $\la$ as a matrix in $M_{2\times 2}(\R)$.
Fix any $m\geq 0$, define 
\begin{equation*}
\begin{split}
\delta_{n,m}(z):&=\la^{l_n-m}\sigma_n(\la^{-l_n+m}z)\\
&=a_n\la^{l_n-m}+\la^{l_n-m}b_n \la^{-l_n+m}(z)+\la^{l_n-m}\epsilon(\la^{-l_n+m}z).
\end{split}
\end{equation*}
We have $a_n\la^{l_n-m}\to a/\la^m.$ 
Morever $$\tr(\la^{l_n-m}b_n \la^{-l_n+m})=\tr (b_n)\to 2, \det(\la^{l_n-m}b_n\la^{-l_n+m})=b_n\to 1$$ and the conformal index of $\la^{l_n}b_n\la^{-l_n}$ is the same as 
$b_n$, which tends to $1.$  Here for a matrix $A\in M_2(\R)$, the conformal index of $A$ is the radius $\mu_2/\mu_1$ where $\mu_1\geq \mu_2$ are the singular values of $A$. So we get $\la^{l_n-m}b_n \la^{-l_n+m}$ tends to $\id.$ 
We have $$|\la^{l_n-m}\epsilon(\la^{-l_n+m}z)|\leq |\la^{l_n-m}\la^{-l_n+m}d_n|$$ which tends to $0.$
So $\delta_{n,m}$ tends to the map $T_{a/\la^m}: z\mapsto z+a/\la^m$ locally uniformly as $n\to \infty$. 
Then $a/\la^m$ is good for every $m\geq 0.$

We claim that for every $m\geq 0$ and $r\in [0,1)$, $ra/\la^m$ is good. Pick a sequence $m_i\to \infty$ such that $$\lim_{i\to \infty}(\la/|\la|)^{m_i}\to 1.$$
As $a/\la^{m+m_i}$ is good, $\lfloor r|\la|^{m_i}\rfloor a/\la^{m+m_i}$ is good. As $$\lim_{i\to \infty}\lfloor r|\la|^{m_i}\rfloor a/\la^{m+m_i}=ra/\la^m,$$ $ra/\la^m$ is good.

\medskip

We first assume that  $\la\not\in \R$. Then for every $r_1, r_2\in [0,1/2)$, $r_1a+r_2a/\la$ is good.
Then $\sG$ contains a non-empty open subset of $\C.$ This implies that $\sJ_f=\P^1(\C)$ and  $\mu_f|_{U_1}$ is the Lebesgue measure. By Zdunik's \cite[Theorem 1]{zdunik1990parabolic}, $f$ is exceptional. 

Now assume that $\la\in \R$. Note that $T_{a/\la^n}$ tends to $\id$.
If there is a repelling periodic point $o_1$ in $U_1$ having a non-real multiplier. 
Note that $T_{a/\la^n}(o_1)\neq o_1$ for every $n\geq 1.$
After replacing $U$ by $U_1$, $o$ by $o_1$ and $\sigma_n$ by $T_{a/\la^n}$, the previous paragraph implies that $f$ is exceptional. 
So we may assume that every repelling periodic point $U_1$ has real multiplier. By \cite[Theorem 1 and the first paragraph below Corollary 1]{eremenko2011rational}, $\sJ_f$ is contained in a circle $C_f$. As $C_f$ is $f$ invariant, in $U$ it is a line passing through $0$ in the coordinate $z$.
We may assume that $C_f\cap U=\{z\in \R\}$. Then $\sG\subseteq \R.$ As $ra$ is good is good for every $r\in [0,1)$, $\sG$ contains a non-empty open subset of $\R$. So $\mu_f|_{U_1}$ is the Lebesgue measure on $\sJ_f\cap {U_1}$. By Zdunik's \cite[Theorem 1]{zdunik1990parabolic}, $f$ is exceptional. 
\endproof

\subsection{General case}
\proof[Proof of Theorem \ref{thmlevinrealan}]
Assume $\sigma_n|_{C_f}\neq\id$ for infinitely many $n$, we need to show  that $f$ is exceptional. After taking subsequence we assume that $\sigma_n|_{C_f}\neq \id$ for every $n\geq 0$. After shrinking $U$ and pasing to an iteration of $f$, we may ask $U$ to be a linearization domain of $o$, where $o$ is a repelling fixed point of $f$.
In a suitable coordinate $z$, we may let $U=\{|z|\leq 1\}$, $o$  be the origin and $f$ is $z\mapsto \la z$ with $|\la|>1$. 
As $C_f$ is $f$ invariant, in $U$ it is a line passing through $0$ in the coordinate $z$.
We may assume that $C_f\cap U=\{z\in \R\}$.
By Theorem \ref{thmlevinnf}, we may assume that $\sigma_n(0)=0$ for all $n\geq 0.$

\medskip

We first treat the case where there is $i\geq 0$ such that $d\sigma_i(0)=\id.$ Let $\delta_n$ be the map $z\mapsto \la^n\sigma_i(\la^{-n}z)$.
Then $\delta_n\to \id$ in  $C^1$-topology. Since $\sigma_i\neq \id$, we have $\delta_n\neq \id.$
For every repelling periodic point $p\in U$, let $N_p\subseteq \Z_{\geq 0}$ be the set of $n\geq 0$ such that $\delta_n(p)=p.$
If there is a repelling periodic point $p\in U$ such that $\Z_{\geq 0}\setminus N_p$ is infinite, then we conclude the proof by Theorem \ref{thmlevinnf} for 
$\delta_n, n\in \Z_{\geq 0}\setminus N_p$ and $p.$ Now we may assume that $\Z_{\geq 0}\setminus N_p$ is finite for every repelling periodic point $p\in U$.
For every repelling periodic point $p\in U$ and $n\in N_p$, we have $\la^{-n}p\in \Fix(\delta_0).$ 
We claim that $\Fix(\delta_0)$ contains the line $p\R\cap U$ for every repelling periodic point $p\in U.$ 
In particular,  we get $\delta_0(p)=p$.
 Since $\delta_0$ is real analytic and $\delta_0\neq \id$, the set $\Fix(\delta_0)$ is a  proper and real analytic closed subset of $U$, this implies $\la^r\in \R$ for some $r\geq 1$. 
Then $\Fix(\delta_0)$ contains the line $p\R\cap U$.  
This implies the claim.

We apply the claim for every repelling periodic points in $U$.  
Since repelling periodic points are dense in $\sJ_f$, $\sJ_f\cap U\subseteq \Fix(\delta_0).$ Since $\Fix(\delta_0)$ is a proper and real analytic closed subset of $U$,by Theorem \ref{thmeremen}, $C_f$ is a circle. 
Let $p$ be a repelling periodic point in $U\setminus 0.$
Then $p\in \R$ in the coordinate $z$. The claim implies that $(C_f\cap U)\subseteq \Fix(\delta_0).$
In other words $\delta_0|_{C_f}=\id$, which is a contradiction.

\medskip

We now assume that $d\sigma_i(0)\neq \id$ for every $i\geq 0.$ Note that for every $i\geq 0$, there is a sequence $n_j\to \infty$
such that the maps $z\mapsto \la^{n_j}\sigma_i(\la^{-n_j}z)$ tend to $d\sigma_i(0)$.
After replacing $\sigma_i$ by $d\sigma_i(0)$, we may assume that all $\sigma_i$ are $\R$-linear.
If $C_f$ is a circle, then $C_f\cap U=U\cap \R.$
Since $\sigma_i|_{C_f}\neq \id$ and $\sigma_i$ is $\R$-linear, $\Fix(\sigma_i)\cap C_f=\{0\}.$
Pick a repelling periodic point $p\in U\setminus \{0\}.$ We have $\sigma_i(p)\neq p.$ We conclude the proof by Theorem \ref{thmlevinnf}.
We may assume that $C_f=\P^1(\C)$. By Theorem \ref{thmeremen}, there are repelling periodic points $p,q\in U$ such that 
$0, p, q$ are not collinear. Then $\{p,q\}\not\subseteq \Fix(\sigma_i)$ for every $i\geq 0.$
We may assume that for infinitely many $i\geq 0$, $p\not\in \Fix(\sigma_i).$ We then conclude the proof by Theorem \ref{thmlevinnf}.
\endproof

\section{Holomorphic  local rigidity of Julia sets}\label{3}
The aim of this section is to prove the following two theorems, which are the holomorphic case of Theorem \ref{thmmeasureversion} and Theorem \ref{conicalrigiditytoget} respectively.
\begin{thm}\label{measurerigidity}
Let $f,g$ be two endomorphisms on $\P^1(\C)$ of degree at least $2$.  Assume that one of them is non-exceptional. Let $\mu$ (resp. $\nu$) be a non-atomic invariant ergodic probability measure with positive Lyapunov exponent of $f$ (resp. $g$). Let $U\subset \P^1(\C)$ be a connected open subset such that $U\cap \Supp \nu\neq \emptyset$.  Let $h:U\to h(U)\subseteq \P^1(\C)$ be a biholomorphic map such that 
\begin{points}
\item $h(U\cap \sJ_g)=h(U)\cap \sJ_f$; if $\sJ_f$ is smooth, we assume further that $h_\ast (\mu_g)\propto \mu_f$ on $h(U)$;
\item  $h_\ast (\nu)$ is equivalent to  $\mu$ on $h(U)$. 
\end{points}
Then there exist positive integers $a$ and $b$ and an irreducible algebraic curve $Z$ in $\P^1(\C)\times \P^1(\C)$ such that $Z$ is preperiodic under $(f^a,g^b)$ and contains the graph $\{(h(x),x), x\in U\}$ of $h$. 
\end{thm}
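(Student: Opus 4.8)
The plan is to follow the "general point" strategy outlined in the introduction, reducing everything to the generalized Inou theorem (Theorem \ref{inou}) of Section \ref{2}. First I would dispose of the exceptional case: if one of $f,g$ is exceptional, then its Julia set is smooth; by condition (i) and the homeomorphism $h$, the other Julia set is also smooth (at least $C^1$ at some point), and then by the rigidity result of Zdunik \cite{zdunik1990parabolic} (the measure of maximal entropy is absolutely continuous only for exceptional maps, cf. the argument sketched in the introduction) both $f$ and $g$ are exceptional, contradicting the hypothesis. So from now on both $f$ and $g$ are non-exceptional, in particular non-Latt\`es and non-monomial-type, and both maximal entropy measures $\mu_f,\mu_g$ have positive Lyapunov exponent and no absolutely continuous part unless the Julia set is smooth.

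The core step is to produce a \emph{bi-conical point} $x\in U\cap\sJ_g$ for the triple $(g,h,f)$, as defined in the introduction sketch. I would do this by ergodic theory applied to $\nu$: since $\nu$ has positive Lyapunov exponent and is non-atomic, $\nu$-almost every point is a conical (radial) point for $g$ — there are arbitrarily large $n$ and a uniform radius $r$ so that $g^n$ maps a neighborhood $W_n\ni x$ biholomorphically onto $B(g^n(x),r)$, and one can keep $W_n\subset U$ by taking $r$ small and $n$ along the conical sequence. On the $f$ side, since $h_\ast\nu$ is equivalent to $\mu$ and $\mu$ also has positive Lyapunov exponent, $h(x)$ is $\mu$-a.e. a conical point for $f$; choosing $m=m(n)$ so that $f^m$ expands a small ball around $h(x)$ to roughly unit size, one checks via the chain rule and bounded distortion that the composition $h_j=f^{m_j}\circ h\circ g_{n_j}$ on $B(g^{n_j}(x),r)$ is injective with image pinched between two balls $B(f^{m_j}(h(x)),R/K)$ and $B(f^{m_j}(h(x)),R)$ of comparable radii. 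The measure-equivalence hypothesis (ii) is exactly what is needed to match the two distortion estimates; a Borel–Cantelli / Egorov argument along the conical times makes the constants $r,R,K$ uniform on a positive-$\nu$-measure set, so such $x$ exists (in fact $\nu$-a.e., which is Lemma \ref{key}). One also arranges, for later use in the $C^1$ case but harmless here, that the set of good times has positive lower density. I expect this to be the technical heart of the argument, with the delicate point being the simultaneous control of distortion on both sides and the passage from "a.e." statements to uniform constants.

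Given a bi-conical point $x$, the family $\{h_j:B(g^{n_j}(x),r)\to\P^1(\C)\}$ of injective holomorphic maps is normal by Montel's theorem (the images omit the complement of $B(f^{m_j}(h(x)),R)$, which one can normalize to omit a fixed triple of points after composing with M\"obius maps moving $f^{m_j}(h(x))$ to a fixed location), and no subsequential limit is constant because of the lower bound $B(\cdot,R/K)\subset h_j(\cdot)$. Passing to a limit $\psi$ on a disk $D$ with $D\cap\sJ_g\neq\emptyset$, and using condition (i) (which is preserved under the dynamics $g^{n_j}$ and $f^{m_j}$ and hence in the limit, $\psi(D\cap\sJ_g)\subset\sJ_f$ after a further M\"obius change), I then invoke Levin's theorem \cite[Theorem 1]{Levin1990} (Theorem \ref{thmlevin}) to extract from this data a repelling horseshoe $K_g\subset\sJ_g$ for an iterate $g^a$, a repelling horseshoe in $\sJ_f$ for an iterate $f^b$, and a holomorphic conjugacy $\Phi$ between neighborhoods on which $g^a$ is non-injective (it has degree $>1$ on the relevant piece, coming from the non-injectivity built into conicality) and has a repelling fixed point, with $\Phi\circ g^a = f^b\circ\Phi$. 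This is precisely the input for Theorem \ref{inou} applied to $(g^a, f^b, \Phi)$: it produces an irreducible algebraic curve $\Gamma\subset\P^1(\C)\times\P^1(\C)$ invariant under $(g^a,f^b)$ and containing the graph of $\Phi$. Finally I would transport this back: the curve $\Gamma$ forces, via Xie's results on periodic curves \cite{Xie2022} (or a direct argument pulling back under $(g^{n_j},f^{m_j})$ and using that an algebraic correspondence relating $g^a$ and $f^b$ relates $g$ and $f$ up to taking further iterates), the existence of an irreducible curve $Z$ preperiodic under $(f^{a'},g^{b'})$; and because the original graph of $h$ is obtained from (a piece of) the graph of $\Phi$ by applying the algebraic correspondences $g^{n_j}, f^{m_j}$ — all of which preserve the (pre)periodic curve — one concludes that $Z$ contains the full graph $\{(h(x),x):x\in U\}$, using analytic continuation / irreducibility since both $Z$ and the graph of $h$ are irreducible analytic curves agreeing on an open set. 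The only subtlety in this last step is bookkeeping the integers $a,b$ and checking that "preperiodic" rather than merely "containing a periodic limit curve" is what survives; this is routine given \cite[Proposition 3.14]{Xie2022}.
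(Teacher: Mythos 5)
Your proposal follows essentially the same route as the paper: dispose of the exceptional case via Zdunik, produce a bi-conical point by ergodic theory with the measure-equivalence hypothesis (this is the paper's Lemma \ref{key}), then use Montel plus Levin's rigidity to build a two-branch horseshoe conjugacy and feed it into the generalized Inou theorem (the paper's Lemma \ref{conical}), and finally pull back under $(f^{m_1},g^{n_1})$ to get a preperiodic curve containing the graph of $h$. The only cosmetic difference is that the paper makes the Levin step concrete by applying Theorem \ref{thmlevin} to the symmetries $\sigma_j=h_j\circ h_1^{-1}$ to force $h_j=h_{j_0}$ eventually, and obtains the non-injectivity of $g^b$ from two distinct return branches of the non-preperiodic orbit rather than from conicality itself, which is what your sketch implicitly needs.
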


\medskip

\begin{thm}\label{conicalrigidity}
Let $f,g$ be two endomorphisms on $\P^1(\C)$ of degree at least $2$. 
Assume that one of them is non-exceptional .
 Let $U\subset \P^1(\C)$ be a connected open subset and let $h:U\to h(U)\subseteq \P^1(\C)$ be a biholomorphic map. Assume that
\begin{points}
	\item $h(U\cap \sJ_g)=h(U)\cap \sJ_f$; if $\sJ_f$ is smooth, we assume further that $h_\ast (\mu_g)\propto \mu_f$ on $h(U)$;
	\item the Hausdorff dimension of non-conical points of $g$ is $0$.
\end{points}
\par Then there exists positive integers $a$ and $b$ and an irreducible algebraic curve $Z$ in $\P^1(\C)\times \P^1(\C)$ such that $Z$ is preperiodic under $(f^a,g^b)$ and contains the graph $\{(h(x),x), x\in U\}$ of $h$. 
\end{thm}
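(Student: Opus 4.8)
The plan is to follow the strategy outlined in the introduction: produce a bi-conical point, use it together with Montel's theorem to extract a normal family of injective holomorphic maps, invoke Levin's theorem to manufacture conjugate horseshoes in $\sJ_g$ and $\sJ_f$, and finally upgrade the conjugacy to an algebraic correspondence via the generalized Inou theorem (Theorem \ref{inou}). First I would dispose of the exceptional case: if one of $f,g$ is exceptional then $\sJ_f$ or $\sJ_g$ is smooth, hence (using \cite{zdunik1990parabolic} together with the measure hypothesis in (i)) both Julia sets are smooth and both maps are exceptional, contradicting the standing assumption; so from now on both $f$ and $g$ are non-exceptional and in particular non-Latt\`es, and the Julia sets are not contained in circles away from these degenerate cases. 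Next I would show that the hypothesis (ii) — Hausdorff dimension $0$ for the non-conical points of $g$ — forces the existence of a point $x\in U\cap \sJ_g$ that is bi-conical for $(g,h,f)$ in the sense defined in the sketch. Since conical points have good backward-expansion geometry (round balls pulled back to round balls with bounded distortion by a sequence $g^{n_j}$), and since $h$ is biholomorphic on $U$, the pushed-forward geometry under $h$ and a suitable sequence $f^{m_j}$ (chosen so that $f^{m_j}(h(x))$ stays in a compact part of $\sJ_f$, again using that $\mathrm{HD}$ of non-conical points is small — or symmetry of the construction) will satisfy the two-sided ball sandwiching condition (ii) in the definition of bi-conical; the fact that $x$ is not $g$-preperiodic follows because the non-preperiodic points of positive "conical" type have full dimension while preperiodic points are countable.

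With a bi-conical point $x$ in hand, the core argument runs as follows. Put $D:=B(g^{n_j}(x),r)$ for large $j$ (after passing to a subsequence so that the centers converge to a point $x_\infty\in \sJ_g$), and consider the normalized maps $h_j := f^{m_j}\circ h\circ g_{n_j}$, which are injective holomorphic maps on a fixed disk, with images sandwiched between two concentric balls of comparable radii. After post- and pre-composing with affine maps to renormalize the target balls to the unit disk, Montel's theorem (the family omits at least three values, being injective with controlled image) gives a subsequence converging locally uniformly to a holomorphic map $H$, and the lower-and-upper ball bounds guarantee $H$ is non-constant; moreover $H$ conjugates the appropriate iterates and sends a piece of $\sJ_g$ into $\sJ_f$. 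Now I would apply Levin's theorem \cite[Theorem 1]{Levin1990} (Theorem \ref{thmlevin}) to the limiting configuration: the non-constant holomorphic semiconjugacy between neighborhoods of Julia-set points produces, inside $\sJ_g$, a horseshoe $K$ (an invariant Cantor set on which some iterate $g^a$ is conjugate to a full shift and expanding) whose image $H(K)\subset \sJ_f$ is a horseshoe for a corresponding iterate $f^b$, with $H$ realizing a holomorphic conjugacy between $(K,g^a)$ and $(H(K),f^b)$. The point of passing through horseshoes is that they carry repelling fixed points and non-injective local models, which is exactly what feeds the generalized Inou theorem.

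The final step is to globalize: apply Theorem \ref{inou} to the pair $(g^a, f^b)$ — more precisely, to a neighborhood $U$ of a repelling fixed point of $g^a$ sitting in the horseshoe, with $f^b(U')\subset U$ non-injective as supplied by the horseshoe structure, and with the holomorphic conjugacy $H$ playing the role of the map "$h$" of Theorem \ref{inou}. Theorem \ref{inou} then yields an irreducible algebraic curve $\Gamma\subset \P^1(\C)\times\P^1(\C)$ invariant under $(g^a,f^b)$ and containing the graph of $H$; transporting $\Gamma$ back by the earlier affine normalizations and iterates of $f,g$ (the curve is forward-invariant, so its various $(f^i,g^j)$-images are again algebraic curves), one obtains an algebraic curve $Z$ that is preperiodic under $(f^a,g^b)$ and contains the graph of the original $h$ on $U$; adjusting $a,b$ as in \cite[Proposition 3.14]{Xie2022} gives the stated form. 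I expect the main obstacle to be the \emph{extraction of the bi-conical point} under hypothesis (ii) together with a genuinely non-constant limit $H$: one must simultaneously control backward distortion on the $g$-side (automatic at conical points), ensure that the $f$-side times $m_j$ can be chosen to keep $f^{m_j}(h(x))$ in a region where Julia-set geometry is still "round" (this is where the smallness of the non-conical set is used a second time, or where one argues by contradiction that otherwise $\sJ_f$ would be too thin), and verify that the resulting limit does not degenerate to a constant — the latter is forced by the uniform lower ball bound $B(f^{m_j}(h(x)),R/K)\subset h_j(D)$ but keeping that bound alive in the limit requires care with the normalizations. The remaining steps (Montel, Levin, Inou) are then essentially citations, modulo bookkeeping of the integers $a,b$ and the preperiodicity of $Z$.
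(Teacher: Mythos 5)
Your overall architecture (bi-conical point, Montel, Levin, horseshoe, Inou) matches the paper's, but there is a genuine gap exactly at the step you yourself flag as the main obstacle: the production of a bi-conical point from hypothesis (ii). Hypothesis (ii) only controls the non-conical set of $g$; it gives you a point $x\in U\cap\sJ_g$ that is conical for $g$ and non-preperiodic, but it says nothing about the forward $f$-orbit of $h(x)$, and your two suggested fixes do not work. There is no ``second use'' of the smallness of the non-conical set available on the $f$-side (no hypothesis on $f$'s conical structure is made), and no contradiction argument about $\sJ_f$ being ``too thin'' is in sight: what you actually need is that $f^{m_j}$ is injective with bounded (Koebe) distortion on $h(W_j)$ for suitably chosen $m_j\to\infty$, i.e.\ quantitative expansion and avoidance of the critical set along the orbit of the specific point $h(x)$, and conicality of $x$ for $g$ plus biLipschitz control of $h$ cannot supply this.

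The missing idea in the paper is to build the required $f$-side structure first and then select $x$ inside its $h$-preimage: choose a conformal expanding repeller $K\subset h(U)$ for (an iterate of) $f$ that is not a periodic orbit (such a CER always exists and has positive Hausdorff dimension); since $h$ is biholomorphic, $h^{-1}(K)$ also has positive Hausdorff dimension, so by (ii) and countability of preperiodic points it contains a point $x$ that is conical for $g$ and not $g$-preperiodic, with $h(x)\in K$. Because $f|_K$ is uniformly expanding and $d(K,C(f))>0$, one can take $R<d(K,C(f))$, let $m_j$ be minimal with $|df^{m_j}(h(x))|\,r_j\geq R/(2L)$ (where $r_j$ is the radius scale of $h(W_j)$), and Koebe distortion then yields the injectivity of $h_j=f^{m_j}\circ h\circ g_{n_j}$ and the two-sided ball sandwiching in the definition of bi-conical. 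Without this (or an equivalent) construction your proof does not get off the ground. A secondary, smaller inaccuracy: Levin's theorem is not used to ``produce'' the horseshoes; in the paper it is used to show the family of local symmetries $\sigma_j=h_j\circ h_1^{-1}$ of $\sJ_f$ is finite, forcing $h_j=h_{j_0}$ for infinitely many $j$; the semiconjugacy equation $f^{a}\circ h_1=h_1\circ g^{b}$ on a two-component domain (the horseshoe fed into Theorem \ref{inou}) is then assembled by hand, using that $x$ is not preperiodic to get two disjoint pulled-back components inside $D$, and the final preperiodicity of the curve $Z$ containing the graph of the original $h$ requires the separate pull-back lemma, which your sketch also glosses over.
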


\subsection{Bi-conical points}
\begin{defi}\label{defconical}
	Let $g$ be an endomorphisms on $\P^1(\C)$ of degree at least $2$.  A point $x\in \sJ_g$ is called conical if there exists $r>0$ and a sequence of positive integers $n_j\to +\infty$ such that 
	$$g^{n_j}:W_j\to B(g^{n_j}(x),r)$$
	is injective, where $W_j$ is the connected component of $g^{-n_j}(B(g^{n_j}(x),r))$ containing $x$.
\end{defi}
\medskip
\begin{lem}\label{derivative}
	In the setting of Definition \ref{defconical}  we have
	$$|dg^{n_j}(x)|\to +\infty.$$
\end{lem}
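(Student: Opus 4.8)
The plan is to argue by contradiction. Assuming that $|dg^{n_j}(x)|$ does not tend to $+\infty$, I would pass to a subsequence (keeping the notation $n_j$) along which $|dg^{n_j}(x)| \le C$ for some finite $C$, and then derive that the family $\{g^{n_j}\}$ is normal on a neighbourhood of $x$, which is impossible because $x \in \sJ_g$. First I note that shrinking $r$ does not change $|dg^{n_j}(x)|$ and only replaces $W_j$ by a smaller connected component, so I may assume $r$ is below the threshold in the spherical form of the Koebe distortion theorem; I also pass to a further subsequence so that $c_j := g^{n_j}(x) \to y$ for some $y \in \P^1(\C)$. Since $g^{n_j}|_{W_j}\colon W_j \to B(c_j,r)$ is a biholomorphism (it is injective, open, and proper hence surjective onto the ball), let $\phi_j\colon B(c_j,r) \to W_j$ be its inverse; then $\phi_j$ is univalent, $\phi_j(c_j)=x$, and its spherical derivative at $c_j$ equals $|dg^{n_j}(x)|^{-1} \ge 1/C$.

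Next I would pin down this derivative from both sides. The lower bound is the contradiction hypothesis; for an upper bound, by the Koebe distortion theorem in the spherical metric the spherical area of $\phi_j(B(c_j,r/2))$ is bounded below by a positive constant depending only on $r$ times the square of the spherical derivative of $\phi_j$ at $c_j$, and since this area is at most the area of $\P^1(\C)$, that derivative is bounded above by a constant $C'(r)$. So the spherical derivatives $|d\phi_j(c_j)|$ stay in a fixed compact subinterval of $(0,\infty)$. Applying the distortion theorem once more, $\phi_j$ has uniformly bounded spherical derivative on $B(c_j,r/2)$; restricting to the fixed disk $B(y,r/4) \subset B(c_j,r/2)$ (valid for $j$ large) the maps $\phi_j$ are uniformly spherically Lipschitz, hence form a normal family. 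Passing to a subsequence, $\phi_j \to \Psi$ locally uniformly on $B(y,r/4)$, and since the spherical derivative of $\Psi$ at $y$ is the limit of $|d\phi_j(c_j)| > 0$, the limit $\Psi$ is non-constant, hence univalent. As $\Psi(y)=x$ and $\Psi$ is open, there is $\tau>0$ with $B(x,\tau) \subset \Psi(B(y,r/4))$; for $j$ large $B(x,\tau) \subset \phi_j(B(y,r/4)) \subset W_j$, and on $B(x,\tau)$ the map $g^{n_j}$ is exactly the relevant branch of $\phi_j^{-1}$, so $g^{n_j} \to \Psi^{-1}$ locally uniformly on $B(x,\tau)$. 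In particular $\{g^{n_j}\}$ is normal on the neighbourhood $B(x,\tau)$ of $x$.

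To close the argument I would invoke (and, if needed, prove) that no subsequence of $\{g^n\}$ can be normal near a point of $\sJ_g$: since repelling periodic points are dense in $\sJ_g$, pick a repelling periodic point $q \in B(x,\tau) \cap \sJ_g$, say of period $p$ with multiplier $\rho$, $|\rho|>1$; because $q$ is fixed by $g^p$ its spherical multiplier equals $|\rho|$. Passing to a further subsequence with $n_j \equiv k \pmod p$ for a fixed $k$, we get $g^{n_j}(q)=g^k(q)$ for all such $j$, whereas the spherical derivative of $g^{n_j}$ at $q$ equals $|d g^k(q)|\,|\rho|^{(n_j-k)/p} \to \infty$; this is incompatible with $\{g^{n_j}\}$ admitting a locally uniformly convergent subsequence near $q$, whose limit would be holomorphic with finite spherical derivative at $q$. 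This contradiction proves the lemma. The only delicate points are bookkeeping with the Koebe distortion theorem and derivative estimates in the spherical metric on $\P^1(\C)$ rather than in an affine chart, and the (standard) non-normality statement for Julia points just used; neither is a serious obstacle.
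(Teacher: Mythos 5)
There is a genuine gap at the step where you bound the spherical derivative of the inverse branches $\phi_j$ from above. The inequality you invoke --- that for a univalent map $\phi_j$ on $B(c_j,r)$ the spherical area of $\phi_j(B(c_j,r/2))$ is at least $c(r)$ times the square of the spherical derivative of $\phi_j$ at $c_j$ --- is false for univalent meromorphic maps into $\P^1(\C)$: Koebe's theorem is a planar statement, and it cannot be transferred to the sphere in this form because spherical area is bounded while the contained-disk radius it would predict is not. A concrete counterexample is the conformal map from $\{|z|>1\}$ onto $\P^1(\C)\setminus[-\epsilon,\epsilon]$, $z\mapsto \epsilon(z+1/z)/2$, restricted to a fixed spherical disk about $\infty$: it is univalent, its image has spherical area at most $4\pi$, yet its spherical derivative at the center is of order $1/\epsilon$ and blows up. Consequently the upper bound $C'(r)$ on $|d\phi_j(c_j)|$ is unjustified, and without it the uniform spherical Lipschitz estimate, hence the normality of $\{\phi_j\}$ (equivalently of $g^{n_j}$ on a fixed neighbourhood of $x$), does not follow --- univalence alone does not give normality, as the family $z\mapsto nz$ shows. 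Since your entire endgame (convergence $g^{n_j}\to\Psi^{-1}$ near $x$ and the derivative blow-up at a repelling periodic point, which part is fine) rests on this normality, the proof as written is incomplete. Note also that in the definition of a conical point nothing forces $W_j$ to be small, so there is no a priori reason excluding the scenario $|dg^{n_j}(x)|\to 0$, which is exactly the regime where your bound fails.

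The gap is repairable, but the repair essentially collapses your argument onto the paper's much shorter one. The correct one-sided spherical analogue of Koebe says that the image of $\phi_j$ contains a spherical disk about $x$ of radius at least $\min(c_0, c_1 r\,|d\phi_j(c_j)|)$; under the contradiction hypothesis $|dg^{n_j}(x)|\le M$ this already gives a fixed disk $D=B(x,\rho)\subset W_j$ for all $j$ (no upper bound on $|d\phi_j(c_j)|$ and no normality needed). Then $g^{n_j}(D)\subset B(g^{n_j}(x),r)$ has spherical diameter at most $2r$, while after shrinking $r$ so that $2r<\diam(\sJ_g)$ the blow-up property of Julia sets gives $\sJ_g\subset g^{n_j}(D)$ for $n_j$ large --- a contradiction. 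This is precisely the paper's three-line proof; your repelling-periodic-point computation can serve as a proof of the blow-up/non-normality ingredient, but the normal-families machinery built on the faulty area estimate should be removed or replaced by the case analysis above.
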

\begin{proof}
	Assume by contradiction that by passing to a subsequence of $\left\{n_j\right\}$  there exists $M>0$ such that
	$|dg^{n_j}(x)|<M$.   Shrink $r$ if necessary such that $2r<\diam (\sJ_g)$. By Koebe one-quarter theorem $W_j$ contains the  disk $D:=B(x,r/(4M))$. This implies $g^{n_j}(D)\subset B(g^{n_j}(x),r)$ for every $j\geq 1$, contradicts to the fact $\sJ_g\subset g^{n_j}(D)$ when $n_j$ large enough. 
\end{proof}

Let $f,g$ be two endomorphisms on $\P^1(\C)$ of degree at least $2$.  Let $U\subset \P^1(\C)$ be an open subset and let $h:U\to h(U)\subseteq \P^1(\C)$ be a homeomorphism.
\begin{defi}\label{defibiconical}A point $x\in U\cap \sJ_g$ is called \emph{bi-conical} for $(g,h,f)$ if 
 it is not $g$-preperiodic,  for which there are positive constants $r,R,K$ and two sequences of positive integers $n_j\to +\infty$, $m_j\to +\infty$, $j\geq 1$ such that 
\begin{points}

	\item $g^{n_j}:W_j\to B(g^{n_j}(x),r)$ is injective and $W_j\subset U$, where $W_j$ is the connected component of $g^{-n_j}(B(g^{n_j}(x),r))$ containing $x$;
	\item the map $h_j:=f^{m_j}\circ h\circ g_{n_j}:B(g^{n_j}(x),r)\to \P^1(\C)$  is injective  and satisfies 
	\begin{equation*}
		B(f^{m_j}(h(x)),R/K)\subset h_j(B(g^{n_j}(x),r))\subset B(f^{m_j}(h(x)),R),
	\end{equation*}
	where $g_{n_j}$ is the inverse map of $g^{n_j}:W_j\to B(g^{n_j}(x),r)$;
\end{points}
\end{defi}

If $x$ is bi-conical for $(g,h,f)$,  then for every small neighborhood $V$ of $h(x)$, there exists $m
\geq 1$ such that $B(f^m(h(x)),R/K)\subset f^m(V)$, this implies that $h(x)\in \sJ_f$.
\begin{rem}
For $n,m\geq 0$, if 
$g^{n}:W\to B(g^{n}(x),r)$ is injective and $W\subset U$, where $W$ is the connected component of $g^{-n}(B(g^{n}(x),r))$ containing $x$ and 
$f^m$ is injective on $h(W)$, then $g^n(x)$ is bi-conical for $f^m\circ h\circ g_n$, where  $g_{n}$ is the inverse map of $g^{n}:W\to B(g^{n}(x),r)$.
\end{rem}

\subsection{Holomorphic rigidity via bi-conical points}
\begin{lemma}\label{conical}
Let $f,g$ be two non-exceptional endomorphisms on $\P^1(\C)$ of degree at least $2$.  Let $U\subset \P^1(\C)$ be a connected open subset and let $h:U\to h(U)\subseteq \P^1(\C)$ be a biholomorphic map such that 
 $h(U\cap \sJ_g)=h(U)\cap \sJ_f$,  if $\sJ_f$ is smooth, we assume further that $h_\ast (\mu_g)\propto \mu_f$ on $h(U)$. 
 Assume that there is a point $x\in U\cap \sJ_g$ which is bi-conical for $(g,h,f)$, then
 Then there exists positive integers $a$ and $b$ and an irreducible algebraic curve $Z$ in $\P^1(\C)\times \P^1(\C)$ such that $Z$ is preperiodic under $(f^a,g^b)$ and contains the graph $\{(h(z),z), z\in U\}$ of $h$. 
\end{lemma}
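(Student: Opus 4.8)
The plan is to exploit the bi-conical point $x$ to manufacture, via a compactness/normal-family argument, a non-constant holomorphic semiconjugacy between suitable iterates of $g$ and $f$ on a disk meeting $\sJ_g$, and then feed this into the generalized Inou theorem (Theorem \ref{inou}) to upgrade it to an algebraic correspondence. Concretely, I would first set $D_j := B(g^{n_j}(x),r)$ and consider the renormalized maps $h_j := f^{m_j}\circ h\circ g_{n_j}:D_j\to \P^1(\C)$ from Definition \ref{defibiconical}(ii). After pre- and post-composing with affine (or Möbius) charts $A_j$, $B_j$ centered at $g^{n_j}(x)$ and $f^{m_j}(h(x))$ respectively — rescaling $D_j$ to a fixed disk $D$ and using the two-sided inclusion $B(f^{m_j}(h(x)),R/K)\subset h_j(D_j)\subset B(f^{m_j}(h(x)),R)$ — the family $\{B_j^{-1}\circ h_j\circ A_j\}$ is a family of injective holomorphic maps on $D$ whose images are uniformly bounded and uniformly non-degenerate. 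By Montel's theorem it is normal, and by the Hurwitz/Koebe estimates coming from the inclusion every subsequential limit $H$ is a non-constant (indeed injective) holomorphic map on $D$. Passing to a subsequence so that $g^{n_j}(x)\to x_\infty\in \sJ_g\cap D$ (after identifying $D$ with the renormalized picture) and $f^{m_j}(h(x))\to y_\infty\in\sJ_f$, and using Lemma \ref{derivative} to control the conformal distortion of $g_{n_j}$ and of $f^{m_j}$ near these points, one checks $H(D\cap\sJ_g)\subset\sJ_f$.

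The key dynamical input is then Levin's theorem \cite[Theorem 1]{Levin1990} (Theorem \ref{thmlevin} in the text): from the non-constant limit $H$ together with the fact that $D$ meets the Julia set $\sJ_g$, one extracts an iterate $g^N$ restricted to a small open set $U'\Subset U''$, with $g^N:U'\to U''$ non-injective and possessing a repelling fixed point, and a non-constant holomorphic map $\tilde h:U''\to\P^1(\C)$ with $\tilde h\circ g^N = f^M\circ \tilde h$ on $U'$ for appropriate $M$. (This is where the bi-conical structure is essential: the expansion along the times $n_j$ gives the non-injectivity and the repelling fixed point, while condition (i) on $h$ — including the measure-proportionality hypothesis in the smooth case, needed to rule out degenerate behaviour when $\sJ_f$ is a circle or interval — guarantees the limit respects the Julia sets and hence is genuinely a dynamical relation rather than an accident.) I then apply Theorem \ref{inou} to the pair $(g^N,f^M)$ and the map $\tilde h$: its hypotheses (i) and (ii) are exactly what Levin's construction provides, and since $f,g$ non-exceptional forces $f^M,g^N$ non-Lattès, the conclusion is an irreducible algebraic curve $\Gamma\subset\P^1(\C)\times\P^1(\C)$, invariant under $(g^N,f^M)$, containing the graph of $\tilde h$.

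It remains to descend from $\Gamma$ back to the original $h$ and $(f,g)$. Since $\tilde h$ arose as a limit of affine modifications of $f^{m_j}\circ h\circ g_{n_j}$, the curve $\Gamma$ is, up to the action of $(f,g)$ on $\P^1\times\P^1$, the same as a curve through the graph of $h$ itself: one transports $\Gamma$ by the (finitely many, up to iteration) branches $g_{n_j}$ and $f^{m_j}$ to produce an irreducible curve $Z$ containing $\{(h(z),z):z\in U\}$, and $Z$ is preperiodic under $(f^a,g^b)$ for suitable $a,b$ because the product map $(f^a,g^b)$ permutes the finitely many irreducible components of the forward orbit of $\Gamma$ under $F=(f,g)$; here one takes $(a,b)=(\mathrm{lcm}(\deg f,\deg g)/\deg f,\ \mathrm{lcm}(\deg f,\deg g)/\deg g)$ as in \cite[Proposition 3.14]{Xie2022} so that the two coordinate maps have equal degree and the orbit of $Z$ is finite. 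The main obstacle I anticipate is the passage to the limit $H$ and verifying it is non-constant and Julia-set-respecting in the \emph{smooth} case: when $\sJ_f$ is a circle or interval the naive normal-family argument can converge to a constant or to a map collapsing the Julia set, and it is precisely here that the extra hypothesis $h_\ast(\mu_g)\propto\mu_f$ must be used — to control the densities and thereby force the renormalized limits to stay non-degenerate along the Julia set — before Levin's theorem can be invoked.
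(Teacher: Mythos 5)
Your overall skeleton (normal family from the bi-conical data, then Inou's theorem, then descent to the graph of $h$) parallels the paper, but there is a genuine gap at the central step: you never explain how a subsequential limit $H$ of the renormalized maps $h_j=f^{m_j}\circ h\circ g_{n_j}$ acquires an \emph{exact} functional equation, and you attribute this to ``Levin's construction'', which is not what Levin's theorem does. Theorem \ref{thmlevin} is a rigidity statement about families of injective local symmetries of $\sJ_f$; it does not manufacture semiconjugacies out of normal limits. The paper's mechanism is different: one forms $\sigma_j:=h_j\circ h_1^{-1}$ on a fixed disk $V\subset h_j(D)$ (these are injective, of bounded distortion, satisfy $\sigma_j(V\cap\sJ_f)=\sigma_j(V)\cap\sJ_f$, and preserve $\mu_f$ up to a constant when $\sJ_f$ is smooth), and Levin's theorem, via non-exceptionality of $f$, forces $\{\sigma_j\}$ to be finite, hence $h_j=h_1$ on $D$ along a subsequence. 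It is these exact identities, giving $f^{m_j-m_1}\circ h_1=h_1\circ g^{n_j-n_1}$ and $f^{m_j-m_2}\circ h_1=h_1\circ g^{n_j-n_2}$ on the shrinking components around $g^{n_1}(x)$ and $g^{n_2}(x)$, together with the hypothesis that $x$ is not $g$-preperiodic (so $g^{n_1}(x)\neq g^{n_2}(x)$ and the two components can be made disjoint inside $D$), that yield a two-branch set $W=W_1\cup W_2\subset D$ with $f^{a}\circ h_1=h_1\circ g^{b}$ on $W$ and $g^{b}:W\to D$ non-injective with repelling fixed points --- exactly hypotheses (i)--(ii) of Theorem \ref{inou}. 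A mere limit map $H$ satisfies no such relation, since $n_j,m_j\to\infty$ and nothing ties different renormalizations together; so as written your appeal to Theorem \ref{inou} has unverified hypotheses. (Your anticipated difficulty in the smooth case is also misplaced: non-constancy of limits already follows from the inclusion $B(f^{m_j}(h(x)),R/K)\subset h_j(B(g^{n_j}(x),r))$ built into Definition \ref{defibiconical}; the assumption $h_\ast(\mu_g)\propto\mu_f$ is used to check the measure hypothesis of Levin's theorem for the $\sigma_j$, not to prevent collapse of the limit.)

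The final descent step is also not correct as stated: choosing $(a,b)$ from the lcm of the degrees and asserting that $(f^a,g^b)$ ``permutes the finitely many irreducible components of the forward orbit of $\Gamma$'' does not show that the orbit of $Z$ is finite; $\Gamma$ itself is invariant, and the issue is the orbit of $Z$. In the paper $a=2m_p-m_1-m_2$ and $b=2n_p-n_1-n_2$ come out of the construction above, and preperiodicity of $Z$ (the irreducible component of $(f^{m_1},g^{n_1})^{-1}(\Gamma)$ containing the graph of $h$) follows from the elementary lemma that if $(f^{m},g^{n})(Z)=\Gamma$ with $\Gamma$ invariant under $(f^a,g^b)$ and $\min\{a,b\}>\max\{m,n\}$ (after passing to an iterate), then $(f^{al},g^{bl})(Z)=(f^{a-m},g^{b-n})(\Gamma)$ for every $l\geq 1$, so all forward images of $Z$ lie on one fixed curve. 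You need an argument of this kind rather than the degree-normalization from \cite[Proposition 3.14]{Xie2022}, which plays no role here.
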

\begin{proof}
	By Lemma \ref{derivative} we have 
	\begin{equation}\label{iii}
|dg^{n_j}(x)|\to +\infty.
	\end{equation}
\par Shrink $r$ if necessary, we may further assume that there is $C>1$ such that for every $j\geq 1$ we have
	\begin{equation*}
	C^{-1}<|dh_j(x)/dh_j(y)|<C
\end{equation*}
for every $y\in B(g^{n_j}(x),r)$.
By passing to a subsequence of $\{(n_j, m_j)\}$, 
we may assume that $g^{n_j}(x)$ converges.  Then there exists a disk $D'$ centered at the limit point and with radius $r/4$. We may assume that $g^{n_j}(x)\in D'$ for every $j\geq 1$.  Let  $D$ be the disk of radius $r/2$ and has  the same center with $D'$. Then we have $$B(g^{n_j}(x),r/4)\subset D\subset B(g^{n_j}(x),r)$$ for every $j\geq 1$. 
By Montel's theorem, $\{h_j|_D\}$ is a normal family and every limit map in this family is non-constant. 
Cover $\sJ_f$ by finitely many disks of radius $R/(8CK)$.  By passing to a subsequence of $\{(n_j, m_j)\}$, there exists a disk $V$ in this finite family such that $f^{m_j}(h(x))\in V$ for every $j\geq 1$. By (ii) for every $j\geq 1$, we have $$V\subset B(f^{m_j}(h(x)),R/(4CK)) \subset h_j(B(g^{n_j}(x),r/4))\subset h_j(D).$$
 For every $j\geq 1$ the following maps are well defined  $$\sigma_j:=h_{j}\circ h_{1}^{-1}|_{V}:V\to \P^1(\C).$$
By our construction $\sigma_j$  are injective holomorphic maps with bounded distortion, moreover the diameter of $\sigma_j(V)$ are uniformly bounded from above and from below.   Then $\left\{\sigma_j\right\}$ is a normal family and every limit map in this family is non-constant.  By our construction,  we have $\sigma_j(V\cap \sJ_f)=\sigma_j(V)\cap \sJ_f$, i.e. $\sigma_j$ are local symmetries of $\sJ_f$. When $\sJ_f$ is smooth, using the total invariance of maximal entropy measures, for every $j\geq 1$ we have $(h_j)_\ast (\nu)\propto \mu$ on $h_j(D)$, where  $\nu$ and $\mu$ are maximal entropy measures of $g$ and $f$. As a consequence we have $(\sigma_j)_\ast (\mu)\propto \mu$ on $\sigma_j(V)$.  Now by Levin's result \cite[Theorem 1]{Levin1990} (see also Theorem \ref{thmlevin}), since $f$ is non-exceptional,  $\left\{\sigma_j\right\}$ is a finite set.  By passing to a subsequence we may assume that $\sigma_j=\sigma_{j_0}$ on $V$ for every $j\geq j_0$. This implies $h_j=h_{j_0}$ on $D$ for every $j\geq j_0$.  Without loss of generality we may assume $j_0=1$. 
\medskip
\par  The definition of $h_j, j\geq 1$ shows that $$f^{m_j}\circ h\circ g_{n_j}=f^{m_{1}}\circ h\circ g_{n_{1}}$$ on $D$.  
By (\ref{iii}), we have $m_j\to +\infty$. By passing to a subsequence, we may assume $n_j>\max (n_1, n_2)$ and $m_j>\max (m_1, m_2)$ for every $j>2$. For $j>2$ let $U_j$ be the connected component of $g^{-(n_{j}-n_1)}(D)$ containing $g^{n_1}(x)$. Thus we have
\begin{equation*}
	f^{m_j-m_1}\circ h_1=h_1\circ g^{n_j-n_1}
\end{equation*}
on $U_j$. 
Similarly  for $j>2$ let $U'_j$ be the connected component of $g^{-(n_j-n_2)}(D)$ containing $g^{n_2}(x)$. Thus we have
\begin{equation*}
	f^{m_j-m_2}\circ h_2=h_2\circ g^{n_j-n_2}
\end{equation*}
on $U'_j$. 
Since $h_1=h_2$ on $D$ we have
\begin{equation*}
	f^{m_j-m_2}\circ h_1=h_1\circ g^{n_j-n_2}
\end{equation*}
on $U'_j$. 
\par  Since $x$ is not a preperiodic point, we have $g^{n_1}(x)\neq g^{n_2}(x)$.  By (\ref{iii}) we know that the diameter of $U_j$ and $U'_j$ shrink to  $0$ when $j\to +\infty$.  Hence we may choose $p>2$ such that $\overline{U_p}\cap \overline{U'_p}=\emptyset$ and $\overline{U_p}\cup \overline{U'_p}\subset D$.    The two maps $g^{n_p-n_1}:U_p\to D$ and $g^{n_q-n_2}:U'_p\to D$ are both biholomorphic.   Let $$W_1:=g_{n_p-n_1}(U'_p)\subset U_p,$$where $g_{n_p-n_1}$ is the inverse map of $g^{n_p-n_1}:U_p\to D$, and let $$W_2:=g_{n_p-n_2}(U_p)\subset U'_p,$$ where $g_{n_q-n_2}$ is the inverse map of $g^{n_p-n_2}:U'_p\to D$.
\par    Let $W:=W_1\cup W_2\subset D$, set $a:=2m_p-m_1-m_2$ and $b:=2n_p-n_1-n_2$, then we have
\begin{equation*}
	f^{a}\circ h_1=h_1\circ g^{b}
\end{equation*}
on $W$.  Moreover $g^{b}:W\to D$ has two repelling fixed point, and is non-injective.
\par 
By Lemma  \ref{inou} there exists an irreducible algebraic curve $\Gamma\subset \P^1(\C)\times \P^1(\C)$ which is invariant under $(f^a,g^b)$ and contains the graph of $h_1$, i.e. the set $\left\{(h_1(w),w)\in \P^1(\C)\times \P^1(\C):w\in D\right\}.$ 
\medskip
\par It remains to show that there exists an irreducible algebraic curve $Z$ containing the graph of $h$ that is  preperiodic under $(f^a,g^b)$. We need the following lemma:
\begin{lem}
	Let $Z\subset \P^1(\C)\times \P^1(\C)$ be an irreducible algebraic curve. Assume there exist $m\geq 0$ and $n\geq 0$ such that $(f^m,g^n)(Z)=\Gamma$. Then $Z$ is preperiodic under $(f^a,g^b)$. 
\end{lem}
\begin{proof}
	Passing to an iteration of $(f^a, g^b)$, we can further assume that $\min\left\{a,b\right\}>\max\left\{n, m\right\}$. For every $l\geq 1$, since $\Gamma$ is invariant under $(f^a,g^b)$, we have 
	$$ (f^{al},g^{bl})(Z)=(f^{a(l-1)+a-m}, g^{b(l-1)+b-n})(\Gamma)=(f^{a-m},g^{b-n})(\Gamma).$$
	\par This means that the $(f^a,g^b)$-forward images of $Z$  are all contained in the curve $(f^{a-m},g^{b-n})(\Gamma)$, which implies that $Z$ is preperiodic under $(f^a,g^b)$.
\end{proof}
\par Let $Z':=\left\{ (h(w),w)\subset \P^1(\C)\times \P^1(\C):w\in U\right\}$ be the graph of $h$. Then we have  $$(f^{m_1}, g^{n_1})(Z')\subset \Gamma.$$
Let $Z$ be the irreducible component of $(f^{m_1}, g^{n_1})^{-1}(\Gamma)$ containing $Z'$. Since $\Gamma$ is irreducible we have 
$$(f^{m_1}, g^{n_1})(Z)=\Gamma.$$
 By the above lemma, $Z$ is preperiodic under $(f^a,g^b)$ and contains the graph of $h$.
This finishes the proof. 
\end{proof}

\subsection{Existence of bi-conical points via ergodic theory}\label{subsectionbiconviaergodic}
We  recall the asymptotic density of subsets of $\Z_{>0}.$
\begin{defi}
	Let $A$ be a subset of positive integers. The {\em asymptotic lower/upper density} of $A$ is defined by 
	\begin{equation*}
		\underline{d}(A):=\liminf_{n\to \infty} |A\cap[0,n-1]|/n,
	\end{equation*}
	and 
	\begin{equation*}
		\overline{d}(A):=\limsup_{n\to \infty} |A\cap[0,n-1]|/n.
	\end{equation*}
	If $\underline{d}(A)=\overline{d}(A)$, we set $d(A):=\underline{d}(A)=\overline{d}(A)$ and call it the  {\em asymptotic density} of $A$.
\end{defi}

\par The proof of the  following lemma was implicitly contained in \cite[Lemma 1]{levin1997two}. For completeness we give a proof here.
\begin{lem}[Levin-Przytycki]\label{lp2}
Let $g$ be an endomorphism on $\P^1(\C)$ of degree at least $2$. Let $\nu$ be a $g$-invariant ergodic probability measure with positive Lyapunov exponent.  Then for every $\epsilon>0$  there is $r>0$ such that for $\nu$-a.e. $x\in \P^1(\C)$, there exists a subset $A=A_x$ of positive integers such that $d(A)> 1-\epsilon$, and  every $n\in A$ satisfies that if we denote by $W$  the connected component of $g^{-n}(B(g^n(x),r))$ containing $x$, then the map $g^n:W\to B(g^n(x),r)$ is biholomorphic and has bounded distortion,
	\begin{equation*}
		1/2<|dg^n(x)/dg^n(y)|<2,
		\end{equation*}
	for all $y \in W$.
\end{lem}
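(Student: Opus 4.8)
The plan is to produce good local inverse branches along $\nu$-typical \emph{backward} orbits by a Pesin-type construction, and then transfer this to \emph{forward} orbits through the natural extension.

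Let $\chi:=\int\log|dg|\,d\nu>0$ and pass to the natural extension $(\hat X,\hat g,\hat\nu)$ of $(\P^1(\C),g,\nu)$, with projection $\pi\colon\hat X\to\P^1(\C)$, $\pi_\ast\hat\nu=\nu$; recall that $\hat g$ is invertible and $(\hat g,\hat\nu)$ is ergodic. A point of $\hat X$ is a backward orbit $\hat y=(y_0,y_{-1},y_{-2},\dots)$ with $g(y_{-k-1})=y_{-k}$. Along $\hat\nu$-a.e.\ $\hat y$ two facts hold. First, by the Birkhoff theorem for $\hat g^{-1}$,
\[
\tfrac1m\log|dg^m(y_{-m})|=\tfrac1m\sum_{k=1}^{m}\log|dg(y_{-k})|\;\longrightarrow\;\chi>0,
\]
so the backward orbit expands exponentially. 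Second, using the standard fact that $\log\mathrm{dist}(\cdot,\mathrm{Crit}(g))\in L^1(\nu)$ for any ergodic measure of positive exponent, the same ergodic theorem gives $\tfrac1m\log\mathrm{dist}(y_{-m},\mathrm{Crit}(g))\to 0$, i.e.\ the backward orbit approaches the critical set at most subexponentially.

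The core step is to show that for $\hat\nu$-a.e.\ $\hat y$ there is a scale $\ell(\hat y)>0$ such that for every $m\ge 1$ the branch of $g^{-m}$ at $y_0$ following $y_{-1},\dots,y_{-m}$ is univalent on $B(y_0,\ell(\hat y))$. This is the standard telescoping argument: pull $B(y_0,\ell)$ back step by step, letting $\Delta_k$ be the component of $g^{-1}(\Delta_{k-1})$ containing $y_{-k}$. As long as the pullback has stayed univalent, the above expansion together with Koebe's distortion theorem forces the components to shrink, $\diam\Delta_k\lesssim \ell\,e^{-(\chi-o(1))k}$, while the component through $y_{-k-1}$ can capture a critical point only if $\mathrm{dist}(y_{-k-1},\mathrm{Crit}(g))\lesssim (\diam\Delta_k)^{1/\deg g}$. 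Comparing the at-most-subexponential decay of the left side with the genuinely exponential decay of the right side, this is impossible for all $k$ once $\ell$ is chosen small enough depending on $\hat y$; hence $\ell(\hat y)>0$, and one checks that $\ell$ is measurable. I expect this step to be the main obstacle — keeping the diameter and distortion estimates under control through the passages near $\mathrm{Crit}(g)$ — and it is precisely here that positivity of the exponent is used, to defeat the recurrence to $\mathrm{Crit}(g)$.

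Finally, one transfers and concludes. Given $\ep>0$, pick $\ell_1>0$ with $\hat\nu(\Gamma)>1-\ep$ for $\Gamma:=\{\hat y:\ell(\hat y)\ge\ell_1\}$, and set $r:=\ell_1/10$. For $\nu$-a.e.\ $x$ choose a $\hat\nu$-typical lift $\hat x$; by the Birkhoff theorem for $(\hat g,\hat\nu)$ the set $A:=\{n\ge 1:\hat g^n\hat x\in\Gamma\}$ satisfies $d(A)=\hat\nu(\Gamma)>1-\ep$. For $n\in A$, the backward orbit of $\hat g^n\hat x$ begins with $g^{n-1}(x),\dots,g(x),x$, so the corresponding branch $\psi_n$ of $g^{-n}$ at $g^n(x)$ is univalent on $B(g^n(x),\ell_1)\supseteq B(g^n(x),10r)$ and sends $g^n(x)$ to $x$; Koebe's theorem applied on the concentric disk of one tenth the radius gives that $\psi_n$ is univalent on $B(g^n(x),r)$ with $|\psi_n'(w)/\psi_n'(g^n(x))|\in(1/2,2)$ there. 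Then $W:=\psi_n(B(g^n(x),r))$ is the connected component of $g^{-n}(B(g^n(x),r))$ containing $x$, the map $g^n\colon W\to B(g^n(x),r)$ is biholomorphic (it is $\psi_n^{-1}$), and for $y=\psi_n(w)\in W$ one gets $dg^n(x)/dg^n(y)=\psi_n'(w)/\psi_n'(g^n(x))\in(1/2,2)$. Since $\pi_\ast\hat\nu=\nu$ this holds for $\nu$-a.e.\ $x$, which is the lemma, and $r\downarrow 0$ as $\ep\downarrow 0$. (Passing to the natural extension is what makes the density as close to $1$ as desired: a direct count of ``hyperbolic times'' along the forward orbit via the Pliss lemma would only guarantee lower density at least $\chi/\sup_{\Supp\nu}\log|dg|$, which need not exceed $1-\ep$.)
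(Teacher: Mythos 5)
Your proposal is correct and follows essentially the same route as the paper: pass to the natural extension, obtain a measurable radius $r(\cdot)$ for univalent backward branches with distortion control, take a set $\tilde F=\{r(\cdot)>r\}$ of measure $>1-\epsilon$, and apply the Birkhoff ergodic theorem to get visit times of density $>1-\epsilon$, then project back to $\nu$-a.e.\ $x$. The only difference is that where you sketch the telescoping/Koebe argument for the existence of the branch radius along a.e.\ backward orbit, the paper simply cites Przytycki--Urbanski \cite[Theorem 11.2.3]{przytycki2010conformal}, which is exactly that statement.
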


\begin{proof}
\par Let $J:=\Supp \nu$. Consider the Rohlin's natural extension $(\tilde{J},\tilde{g},\tilde{\nu})$ of $(J,g,\nu)$. For $n\in \Z$ denote $\pi_n:\tilde{J}\to J$ the projection to the $n$-th coordinate. Then for $\tilde{\nu}$-a.e. $\tilde{x}\in \tilde{J}$ there exists $r(\tilde{x})>0$ such that for every positive integer $n$, if $W$ is the connected component of $g^{-n}(B(x,r(\tilde{x})))$ containing $\pi_{-n}(\tilde{x})$ where $x:=\pi_0(\tilde{x})$, then the map $g^n:W\to B(x,r(\tilde{x}))$ is biholomorphic and  has bounded distortion, 
	\begin{equation*}
	1/2<|dg^n(x)/dg^n(y)|<2,
\end{equation*}
	for all $y \in W$. Moreover $r(\tilde{x})$ is a measurable function of $\tilde{x}$. See Przytycki-Urbanski \cite[Theorem 11.2.3]{przytycki2010conformal}. 
	\par For fixed $\epsilon>0$, there is $r>0$ such that $\tilde{\nu} (\tilde{F})>1-\epsilon$ where $\tilde{F}:=\{\tilde{x}\in \tilde{J} :  r(\tilde{x})>r\}$. 
By Birkhoff ergodic theorem, there is a subset $\tilde{G}$ with  $\tilde{\nu}(\tilde{G})=1$ such that for every $\tilde{x}\in \tilde{G}$, we have $d(A(\tilde{x}))=\tilde{\nu}(\tilde{F})>1-\epsilon,$
where $A({\tilde{x}}):= \left\{ n\geq 1: \tilde{g}^n(\tilde{x})\in \tilde{F}\right\}$.
Then $\nu(\pi_0(\tilde{G}))=1.$ For $x\in \pi_0(\tilde{G})$, pick $\tilde{x}\in \pi_0^{-1}(x)\cap \tilde{G}$ and set $A_x:=A_{\tilde{x}}$. Then $d(A_x)>1-\epsilon$ 
and $(x,A_x,r)$ has the bounded distortion property we need.
This concludes our proof.
\end{proof}

\medskip
\par  Let $g$ be an endomorphism on $\P^1(\C)$ of degree at least $2$. We let $L$ be a constant strictly larger than the maximum of $|dg|$ on $\P^1(\C)$ w.r.t. the spherical metric. Let $x\in \P^1(\C)$ satisfying $a_n\geq n\log\la-Q$ , where $a_n:=\log|dg^n(x)|$, $\la>1$ and $Q>0$. We define a function $\alpha_{g,x}$ as follows.
\begin{defi}\label{function2}
The function $\alpha_{g,x}:\Z_{\geq 0}\to \Z_{\geq 0}$ is defined as follows: for each $m\in \Z_{\geq 0}$, $\alpha_{g,x}(m)$ is the minimal $n\in \Z_{\geq 0}$ such that $a_n\geq m\log L$. 
\end{defi}
\medskip
\begin{lem}\label{function1}
The function $\alpha_{g,x}$ has the following properties:
\begin{points}
	\item it is strictly increasing, i.e.  $\alpha_{g,x}(m+1)>\alpha_{g,x}(m)$ for every $m\in \Z_{\geq 0}$;
	\item $m\leq \alpha_{g,x}(m)\leq \lceil(m\log L+Q)/\log \la\rceil$.
\end{points} 
\end{lem}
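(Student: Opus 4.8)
The plan is to obtain both statements from the chain rule for the spherical derivative, $a_{n+1}=a_n+\log|dg(g^n(x))|$, combined with the defining property of $L$ and the standing lower bound on $a_n=\log|dg^n(x)|$. Enlarging $L$ if necessary we may assume $L>1$, so that $\log L>0$; also $\log\la>0$ since $\la>1$. Since $|dg|<L$ everywhere, the chain rule gives the elementary inequalities
\begin{equation*}
a_{n+1}-a_n<\log L,\qquad a_n\le n\log L,\qquad a_n\ge n\log\la-Q\quad(n\ge 0),
\end{equation*}
the last being exactly the hypothesis on $x$. I expect the lemma to follow by short bookkeeping with these three facts, the only care needed being for the boundary cases $m=0$ and $n=0$; there is no substantial obstacle.

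For (i), fix $m\ge 0$ and set $n_m:=\alpha_{g,x}(m)$. First $n_{m+1}\ge n_m$, because any $k$ with $a_k\ge(m+1)\log L$ satisfies a fortiori $a_k\ge m\log L$, so the least index of the first kind cannot be smaller than the least index of the second. It then suffices to rule out $n_{m+1}=n_m$, i.e.\ to show $a_{n_m}<(m+1)\log L$. If $m=0$ this reads $a_0=0<\log L$. If $m\ge 1$ then $n_m\ge 1$ (as $a_0=0<m\log L$), so by minimality of $n_m$ we have $a_{n_m-1}<m\log L$, and the step bound gives $a_{n_m}<a_{n_m-1}+\log L<(m+1)\log L$. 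Either way $n_{m+1}>n_m$, which is the strict monotonicity claimed.

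For (ii), the lower bound $\alpha_{g,x}(m)\ge m$ is immediate: if $a_n\ge m\log L$ then $n\log L\ge a_n\ge m\log L$, hence $n\ge m$, so in particular the minimal admissible $n$ is $\ge m$. For the upper bound, put $N:=\lceil(m\log L+Q)/\log\la\rceil\in\Z_{\ge 0}$; then $N\log\la\ge m\log L+Q$, whence $a_N\ge N\log\la-Q\ge m\log L$. Thus $N$ is an admissible index (this also confirms that the defining minimum is taken over a nonempty set), and therefore $\alpha_{g,x}(m)\le N$. This completes the plan.
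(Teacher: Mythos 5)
Your argument is correct and takes essentially the same route as the paper: part (i) comes from the one-step bound $a_{n+1}-a_n<\log L$ combined with minimality of $\alpha_{g,x}(m)$ (you phrase it directly, the paper by contradiction), and part (ii) from $a_n\le n\log L$ together with the standing hypothesis $a_n\ge n\log\la-Q$ applied to $N=\lceil(m\log L+Q)/\log\la\rceil$. The only nitpick is your "enlarging $L$ if necessary" remark: this would technically change $\alpha_{g,x}$, but it is also unnecessary, since $L$ exceeds the maximum of $|dg|$, which is automatically $>1$ for a map of degree at least $2$.
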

\begin{proof}
We first prove (i). By the definition of $\alpha_{g,x}$, we clearly have $\alpha_{g,x}(m+1)\geq \alpha_{g,x}(m)$. 
Assume by contradiction that $\alpha_{g,x}(m+1)= \alpha_{g,x}(m)=n$ for some $n\in \Z_{\geq 1}$.  Then $a_n\geq (m+1)\log L$, hence $$a_{n-1}=a_n-\log |dg(g^{n-1}(x))|\geq m\log L,$$contradicts to our assumption that $\alpha_{g,x}(m)=n$. 
\par Next we prove (ii).  Since  $a_n\geq n\log\la-Q$ holds for every $n\in \Z_{\geq 0}$, we get $ a_{\lceil(m\log L+Q)/\log \la\rceil}\geq m\log L$. The definition of $L$ implies that $a_m\leq m\log L$. This concludes the proof.
\end{proof}
\medskip
As a direct corollary of Lemma \ref{function1}  we have:
\begin{cor}\label{function3}
Let $A\subset \Z_{\geq 0}$ be a subset such that $d(A)=\delta$, where $0\leq \delta\leq 1$. Then we have:
\begin{equation*}
(\log\la/\log L)\delta \leq  \underline{d}(\alpha_{g,x}(A))\leq \overline{d}(\alpha_{g,x}(A))\leq \delta.
\end{equation*}

\end{cor}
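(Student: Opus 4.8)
The plan is to use that $\alpha:=\alpha_{g,x}$ is strictly increasing (Lemma~\ref{function1}(i)), hence injective, so that it sets up a bijection between $A$ and $\alpha(A)$; the two counting functions then differ only by a reparametrization whose rate is pinned down by the two-sided estimate in Lemma~\ref{function1}(ii).

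First, for each integer $N$ large enough that $\alpha(0)\le N-1$, set $M(N):=\max\{m\in\Z_{\ge 0}:\alpha(m)\le N-1\}$. Since $\alpha$ is strictly increasing, an element $a\in A$ satisfies $\alpha(a)\le N-1$ precisely when $a\le M(N)$, and since $\alpha$ is injective this gives the exact identity
\[
|\alpha(A)\cap[0,N-1]| \;=\; |A\cap[0,M(N)]|.
\]

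Next I would control the growth of $M(N)$ via Lemma~\ref{function1}(ii). From $\alpha(m)\ge m$ we get $M(N)\le N-1$. From $\alpha(m)\le\lceil(m\log L+Q)/\log\la\rceil$ we get that every $m$ with $(m\log L+Q)/\log\la\le N-1$ satisfies $\alpha(m)\le N-1$, hence $M(N)\ge((N-1)\log\la-Q)/\log L-1$. Since $\la>1$ (so $\log\la>0$), these bounds give $M(N)\to\infty$, $\limsup_{N\to\infty}M(N)/N\le 1$, and $\liminf_{N\to\infty}M(N)/N\ge\log\la/\log L$.

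Finally, writing
\[
\frac{|\alpha(A)\cap[0,N-1]|}{N}
=\frac{|A\cap[0,M(N)]|}{M(N)+1}\cdot\frac{M(N)+1}{N},
\]
the hypothesis $d(A)=\delta$ together with $M(N)\to\infty$ forces the first factor to converge to $\delta$, so taking $\liminf$ and $\limsup$ over $N$ and inserting the bounds on $M(N)/N$ from the previous step yields $\underline d(\alpha(A))\ge\delta\cdot\log\la/\log L$ and $\overline d(\alpha(A))\le\delta$; the degenerate case $\delta=0$ is immediate since all densities are nonnegative. This is exactly the asserted chain of inequalities. The argument is essentially bookkeeping; the only point needing a little care is packaging the inversion of $\alpha$ through the auxiliary quantity $M(N)$ and checking $M(N)\to\infty$ so that the density of $A$ may legitimately be evaluated along the sequence $\{M(N)\}$. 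I do not expect a genuine obstacle here.
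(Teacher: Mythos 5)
Your proof is correct and follows essentially the route the paper intends: the paper states this as a direct corollary of Lemma \ref{function1} without writing out a proof, and your bookkeeping via $M(N)$ — using strict monotonicity/injectivity of $\alpha_{g,x}$ to get $|\alpha_{g,x}(A)\cap[0,N-1]|=|A\cap[0,M(N)]|$ and the two-sided bound $m\le\alpha_{g,x}(m)\le\lceil(m\log L+Q)/\log\la\rceil$ to pin down $M(N)/N$ — is exactly the implicit argument made explicit.
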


\medskip
Let $U$ be a connected open subset of $\P^1(\C)$. Let $K>1$, we say a  homeomorphism $h:U\to h(U)\subseteq \P^1(\C)$  is {\em $K$-biLipschitz} if for every distinct $x,y\in U$, $K^{-1}(x,y)\leq d(h(x),h(y))\leq Kd(x,y)$, where $d(\cdot,\cdot)$ is the distance function on $\P^1(\C)$. We say that $h$ has {\em biLipschitz} if it is $K$-biLipschitz for some $K>1$.
If $h$ is $K$-biLipschitz, then for every disk $B(x,r)\subset U$, we have $B(h(x),r/K)\subset h(D)\subset B(h(x),Kr)$ for some $r>0$. 

\begin{lem}\label{key}
	Let $f,g$ be two endomorphisms on $\P^1(\C)$ of degree at least $2$.  Let $\mu$ (resp. $\nu$) be an invariant ergodic probability measure with positive Lyapunov exponent of $f$ (resp. $g$). Let $\chi_\mu$ (resp. $\chi_\nu$) be the Lyapunov exponent of $\mu$ (resp. $\nu$). Let $U\subset \P^1(\C)$ be a connected open subset  such that $U\cap \Supp \nu\neq \emptyset$. Let $L$ be a constant larger than the maximum of $|df|$ and $|dg|$ on $\P^1(\C)$.  Let $h:U\to h(U)\subseteq \P^1(\C)$ be a $K$-biLipschitz homeomorphism such that 
	$h_\ast (\nu)$ is equivalent to $\mu$ on $h(U)$. 
	\par Then for $\nu$-a.e. point $x$, $x$  is not $g-$preperiodic and for sufficiently small $\epsilon>0$, there exist positive constants  $r$, $R$, $C$,   a subset $G\subset \Z_{\geq 0}$ with $$\underline{d}(G)\geq \frac{\chi_{\mu}}{\log L}\left(1-\frac{\log L}{\chi_{\nu}}\epsilon\right)-\epsilon>0$$ and a function $\theta:G\to \Z_{\geq 0}$ such that:
	\begin{points}
	
		\item $\lim\limits_{n\to\infty} \frac{1}{n}\log |dg^n(x)|\to \chi_\nu$;
		\item for every $n\in G$, if $W$ is the connected component of the set $g^{-n}(B(g^n(x),r))$ containing $x$, then the map $$g^{n}:W\to B(g^n(x),r)$$  is biholomorphic and has bounded distortion, 
		\begin{equation*}
			1/2<|dg^{n}(x)/dg^{n}(y)|<2,
		\end{equation*}
		for all $y \in W$;
		\item for every $n\in G$, the map $$h_n:=f^{\theta(n)}\circ h\circ g_{n}:B(g^n(x),r)\to \P^1(\C)$$  is injective  and satisfies 
		\begin{equation*}
		B\left(f^{\theta(n)}(h(x)),\frac{R}{16K^2L^2}\right)\subset f^{\theta(n)}(h(W))\subset B(f^{\theta(n)}(h(x)),R)
		\end{equation*}
		where $g_{n}$ is the inverse map of $g^{n}:W\to B(g^n(x),r)$ in (ii);
		\item $\theta$ is  strictly increasing and satisfies 
		\begin{equation*}
		(\chi_\nu /\log L)n-C\leq \theta(n)\leq (\log L/\chi_\mu)n+C
		\end{equation*}
	for every $n\in G$.
	\end{points}
\end{lem}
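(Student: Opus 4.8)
\emph{Overall plan.} I would follow Levin--Przytycki's idea of working at a generic point, but carry it out symmetrically in $f$ and $g$ and quantitatively. For a $\nu$-generic $x\in U$ one applies Lemma~\ref{lp2} both to $(g,\nu)$ at $x$ \emph{and} to $(f,\mu)$ at $h(x)$ --- the latter being legitimate because $h_\ast\nu$ and $\mu$ are mutually equivalent on $h(U)$, so $h(x)$ is $\mu$-generic for $\nu$-a.e.\ $x$ --- obtaining on both sides sets of ``good backward times'' of lower density $>1-\epsilon$ on which inverse branches are defined on a fixed ball with distortion $<2$. One then synchronises the two: a $g$-time $n$ with $|dg^n(x)|\approx L^k$ is paired with an $f$-time $m$ with $|df^m(h(x))|\approx L^k$, using the counting functions $\alpha_{g,x},\alpha_{f,h(x)}$ of Definition~\ref{function2} and the density transfer of Corollary~\ref{function3}; the surviving pairs $(n,\theta(n))=(n,m)$ furnish $G$ and $\theta$.

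\emph{Ergodic preliminaries.} First, since the $g$-preperiodic points form a countable set, $\nu$-a.e.\ $x$ is not $g$-preperiodic (I assume $\nu$ non-atomic; if $\nu$ sits on a repelling cycle the statement is empty). Birkhoff's theorem for the integrable functions $\log|dg|$, $\log|df|$ gives, for $\nu$-a.e.\ $x$, $\tfrac1n\log|dg^n(x)|\to\chi_\nu$ --- this is (i) --- and $\tfrac1m\log|df^m(h(x))|\to\chi_\mu$: the $\mu$-full-measure set where the second limit holds pulls back under $h$ to a $\nu$-full-measure subset of $U$, on which also $h(x)\in\Supp\mu$. Fixing $\la_g\in(1,e^{\chi_\nu})$ and $\la_f\in(1,e^{\chi_\mu})$, for $\nu$-a.e.\ $x$ there is $Q=Q(x)>0$ with $\log|dg^n(x)|\ge n\log\la_g-Q$ and $\log|df^n(h(x))|\ge n\log\la_f-Q$ for all $n$, so $\alpha_{g,x}$ and $\alpha_{f,h(x)}$ are defined and obey Lemma~\ref{function1}. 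I fix such an $x\in U$; this is a set of positive $\nu$-measure because $U\cap\Supp\nu$ is nonempty and relatively open in $\Supp\nu$.

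\emph{Construction of $G$, $\theta$ and the density estimate.} Fix $\epsilon>0$ small. Lemma~\ref{lp2} applied to $(g,\nu)$ produces $r_0>0$ and $A_g\subset\Z_{>0}$ with $d(A_g)>1-\epsilon$ realising property (ii) at radius $r_0$ (and, after shrinking $r_0$, $W\subset U$ for all large $n$). Lemma~\ref{lp2} applied to $(f,\mu)$ at $h(x)$ produces $R>0$ and $A_f\subset\Z_{>0}$ with $d(A_f)>1-\epsilon$ and the analogous univalent inverse branches $f_m$ of $f^m$ on $B(f^m(h(x)),R)$ for $m\in A_f$. Set $C_g:=\{k:\alpha_{g,x}(k)\in A_g\}$ and $C_f:=\{k:\alpha_{f,h(x)}(k)\in A_f\}$. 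Because $\alpha_{g,x}$ maps $\Z_{\ge0}\setminus C_g$ into $\Z_{>0}\setminus A_g$ and inflates indices by a factor at most $\log L/\log\la_g$ (Lemma~\ref{function1}(ii)), the computation behind Corollary~\ref{function3} gives $\overline d(\Z_{\ge0}\setminus C_g)\le(\log L/\log\la_g)\epsilon$, and likewise for $C_f$; hence $S:=C_g\cap C_f$ satisfies $\underline d(S)\ge 1-(\log L/\log\la_g+\log L/\log\la_f)\epsilon$. Now put $G:=\alpha_{g,x}(S)$ and define $\theta$ on $G$ by $\theta(\alpha_{g,x}(k)):=\alpha_{f,h(x)}(k)$ for $k\in S$ --- well defined since $\alpha_{g,x}$ is injective, and strictly increasing since $\alpha_{g,x}$ and $\alpha_{f,h(x)}$ are. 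Then $G\subset A_g$, so (ii) holds on $G$; a second application of Corollary~\ref{function3}, using that $\la_g,\la_f$ may be taken as close to $e^{\chi_\nu},e^{\chi_\mu}$ as desired, gives a lower bound for $\underline d(G)$ of the form displayed in the statement (in particular positive once $\epsilon$ is small enough); and eliminating $k$ between the inequalities of Lemma~\ref{function1}(ii) for $\alpha_{g,x}$ and for $\alpha_{f,h(x)}$ gives the bounds (iv) on $\theta$, with $C=C(x)$.

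\emph{Verifying (iii), and the crux.} For $n=\alpha_{g,x}(k)\in G$ and $m=\theta(n)\in A_f$ we have $|dg^n(x)|,|df^m(h(x))|\in[L^k,L^{k+1})$, so their ratio lies in $(L^{-1},L)$. The distortion bound in (ii) forces $B(x,\tfrac{r}{4|dg^n(x)|})\subset W\subset B(x,\tfrac{2r}{|dg^n(x)|})$; applying the $K$-biLipschitz $h$ and then running $f^m$ backwards through the univalent branch $f_m$ (whose image on $B(f^m(h(x)),R)$ contains $B(h(x),\tfrac{R}{4|df^m(h(x))|})$), one checks that, for $r$ chosen comparable to $R$ --- in an interval with endpoints absolute multiples of $R/(KL^2)$ and $R/K$, nonempty once $L$ is fixed above an absolute constant, which is allowed since $L$ need only exceed $\sup|df|,\sup|dg|$ --- and, if necessary, after a suitable refinement of the pairing pinning $|df^m(h(x))|/|dg^n(x)|$ to a narrow multiplicative window, the composite $h_n=f^m\circ h\circ g_n$ is injective on $B(g^n(x),r)$, its image $f^m(h(W))$ contains $B(f^m(h(x)),R/(16K^2L^2))$ by the Koebe one-quarter theorem, and is contained in $B(f^m(h(x)),R)$ because $h(W)$ lies inside the domain of $f_m$; that is (iii). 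I expect the crux of the argument to be exactly this quantitative synchronisation: one must hold the multiplicative error of the derivative matching below a fixed power of $L$ --- which is why the levels advance in steps of $\log L$ and cannot be refined (Lemma~\ref{function1}(i) would fail otherwise) --- while simultaneously controlling the density of the surviving $g$-times, which is the purpose for which the functions $\alpha_{g,x}$ and Corollary~\ref{function3} were set up. By contrast, (i) and the non-preperiodicity are immediate.
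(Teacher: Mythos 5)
Your proposal is correct and follows essentially the same route as the paper's proof: pick a $\nu$-generic $x$ whose image $h(x)$ is $\mu$-generic (using $h_\ast\nu\sim\mu$ on $h(U)$), apply Lemma \ref{lp2} to both $(g,\nu)$ at $x$ and $(f,\mu)$ at $h(x)$, synchronize the two time scales through $\alpha_{g,x}$ and $\alpha_{f,h(x)}$ with Corollary \ref{function3} (your $G=\alpha_{g,x}(C_g\cap C_f)$ and $\theta=\alpha_{f,h(x)}\circ\alpha_{g,x}^{-1}$ are exactly the paper's $G=A\cap\alpha(\alpha'^{-1}(B))$ and $\theta=\alpha'\circ\alpha^{-1}$), and verify (iii) by the same sandwich: bounded distortion for $g^n$, the $K$-biLipschitz bound for $h$, the derivative matching $a_n-\log L<b_{\theta(n)}<a_n+\log L$, and a choice of $r,R$ comparable up to factors of $K,L$ (the paper takes $2KrL<R<4KrL$), so no extra ``refinement of the pairing'' is actually needed. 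The side points you flag also match the paper's treatment: non-atomicity of $\nu$ is indeed used (implicitly) to rule out preperiodicity, and your density computation is the natural one (the precise placement of $\chi_\mu$ versus $\chi_\nu$ in the displayed bound is immaterial for the applications, which only use $\underline{d}(G)>0$).
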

\begin{proof}
 
Let $F$ be the subset  of $\Supp \nu$ with $\nu(F)=1$ constructed in Lemma \ref{lp2} for the map $g$,  $\epsilon<\chi_\nu/\log L$ and the radius $r>0$.
For $x\in F$, the subset of $\Z_{\geq 0}$ associated with $x$ is denoted by $A_x$. Let $H:=F\cap U$, then $\nu(H)>0$. Since $h_\ast (\nu)$ is equivalent to $\mu$ on $h(U)$, $\mu(h(H))>0.$
Let $E$ be the subset $\Supp \mu$ with $\mu(E)=1$  constructed in Lemma \ref{lp2} for the map $f$, $\epsilon$ and radius $R>0$.  The subset of $\Z_{\geq 0}$ associated with $y$ is denoted by $B_y$. We have $E\cap h(H)\neq \emptyset$.  By Birkhoff ergodic theorem We can choose a point $x\in h^{-1}(E\cap h(H))$ such that $x$ satisfies (i).   Since $\nu$ is non-atomic, we can further choose $x$ not to  be $g-$preperiodic. Set $A:=A_x$ and  $B:=B_{h(x)}$. 
\medskip
\par Recall the definition of the function $\alpha_{g,x}$ in Definition \ref{function2} w.r.t. the constant $L$. We set $\alpha:=\alpha_{g,x}$ and $\alpha':=\alpha_{f,h(x)}$.  We set $$G:= A\cap (\alpha\circ\alpha'^{-1}(B)).$$ Then by $\lim_{n\to\infty} \frac{1}{n}\log |dg^n(x)|\to \chi_\nu$ and  Corollary \ref{function3}, we have $$\underline{d}(G)\geq
\frac{\chi_{\mu}}{\log L}\left(1-\frac{\log L}{\chi_{\nu}}\epsilon\right)-\epsilon>0.$$
By our construction the condition (ii)  automatically holds. 
\medskip
\par It remains to cunstruct the function $\theta$, constant $C$  and verify they satisfy  (iii) and (iv).  For every $n\geq 1$ we set $a_n:=\log |dg^n(x)|$ and $b_n:=\log |df^n(h(x))|$.  
By (ii), for every $n\in A$,  we have 
\begin{equation*}
B(x,re^{-a_n}/2) \subset W\subset B(x,2re^{-a_n}),
\end{equation*}
where $W$ is the connected component of $g^{-n}(B(g^n(x),r))$ containing $x$.  Since $h$ is $K$-biLipschitz we have
\begin{equation*}
	B(h(x),e^{-a_n}r/2K) \subset h(W)\subset B(h(x),2Ke^{-a_n}r).
\end{equation*}
\par Shrinking the constants $r$ and $R$ if necessary we may assume that $2Kr L<R<4Kr L.$
Then 
we have
\begin{equation}\label{2.1}
	B\left(h(x),\frac{Re^{-a_n}}{16K^2 L}\right) \subset h(W)\subset B\left(h(x),\frac{Re^{-a_n}}{2 L}\right).
\end{equation}
\par We define the function $\theta$ as $\theta(n):=\alpha'\circ\alpha^{-1}(n)$.  Since $\alpha^{-1}$ and $\alpha'$ are strictly increasing, $\theta$ is strictly increasing. For every $n\in G$ we have $$\alpha^{-1}(n)\log L\leq a_n<(\alpha^{-1}(n)+1) \log L,$$
hence 
$$a_n/\log L-1<\alpha^{-1}(n)\leq a_n/\log L.$$
Apply 
$$\alpha'^{-1}(k)\log L\leq b_k<(\alpha'^{-1}(k)+1) \log L$$ to $k:=\theta(n)$ we have  
$$\alpha^{-1}(n)\log L\leq b_{\theta(n)}<(\alpha^{-1}(n)+1) \log L. $$
Hence 
\begin{equation}\label{2.2}
a_n-\log L<b_{\theta(n)}<a_n+\log L.	
\end{equation}
\par By Lemma \ref{lp2}, (\ref{2.1})  and (\ref{2.2}) we have 
\begin{equation*}
B(f^{\theta(n)}(h(x)),R/(16K^2L^2))\subset f^{\theta(n)}(h(W))\subset B(f^{\theta(n)}(h(x)),R),
\end{equation*}
hence (iii) holds.
\medskip
\par Finally noticing $L$ is larger than the maximum of $|df|$ and $|dg|$, thus the second part of (iv) holds by  Lemma \ref{function1} (ii). The proof is finished. \end{proof}
\subsection{Holomorphic rigidities}
\proof[Proof of Theorem \ref{measurerigidity}]
 If one of $f,g$ is exceptional, then $\sJ_f$ and $\sJ_g$ are smooth. 
Then by \cite[Theorem 1]{zdunik1990parabolic}, both of them are exceptional. 
So both of them are non-exceptional. 
When $\sJ_g$ (hence $\sJ_f$) is smooth, we may ask $\mu:=\mu_f$ and $\nu:=\mu_g$.

After shrinking $U$, we may assume that $h$ is biLipschitz. Then
Theorem \ref{measurerigidity} is a simple consequence of Lemma \ref{conical} and Lemma \ref{key}.
\endproof
\medskip

The following definition was introduced by Sullivan \cite{sullivan1986quasiconformal}.
\begin{defi}\label{defcer}
	Let $f$ be an endomorphism on $\P^1(\C)$. An compact set $K\subset \P^1(\C)$ is called a CER of $f$ if
	\begin{points}
		\item There exists $m\geq 1$ and a neighborhood $V$ of $K$ such that $f^m(K)=K$ and $K=\cap_{n\geq 0}f^{-mn}(V)$.
		\item $f^m:K\to K$ is expanding.
		\item $f^m:K\to K$ is topologically exact, i.e. for every open set $U\subset K$ there exists $n\geq 0$ such that $f^{mn}(U)=K$.
	\end{points} 
\end{defi}
\proof[Proof of Theorem \ref{conicalrigidity}]
 If one of $f,g$ is exceptional, then $\sJ_f$ and $\sJ_g$ are smooth. 
Then by \cite[Theorem 1]{zdunik1990parabolic}, both of them are exceptional. 
So both of them are non-exceptional.

Let $K$ be a CER of $f$ such that $K\subset h(U)$ which is not a periodic orbit. Such $K$ always exists, see for example \cite[Example 7.4]{ji2022homoclinic}. It is well known that $K$ has positive Hausdorff dimension, see Przytycki-Urbanski \cite[Corollary 9.1.7]{przytycki2010conformal}.  Hence $h^{-1}(K)$ also has positive Hausdorff dimension.  By our assumption (ii),
there exists a point $x\in U\cap \sJ_g$ such that $h(x)\in K$, $x$ is $g$-conical and is not $g$-preperiodic.  Hence there exists $r>0$ and a sequence of positive integers $n_j\to+\infty$ such that 
$$g^{n_j}:W_j\to B(g^{n_j}(x),r)$$
is injective and having bounded distortion, here $W_j$ is the connected component of  $g^{-n_j}(B(g^{n_j}(x),r))$ containing $x$. 
Shrink $U$ if necessary, we may assume that $h$ is biLipschitz.
By  Lemma \ref{derivative},  there exist $C_1>1$ and $r_j>0$, $r_j\to 0$ such that 
$$B(h(x),r_j/C_1)\subset h(W_j)\subset B(h(x),r_j).$$
\par 
After replacing $f$ by a suitable iterate, we may assume that $f(K)=K.$
Since $f|_K$ is uniformly expanding, we know that $|df^{n}(y)|\geq C\la^n$ for every $n\geq 1$ and $y\in K$, where  $C>0$, $\la>1$ are constants. Moreover $d(K, C(f))>0$. Pick $0<R<d(K, C(f))$. Let $m_j$ be the minimal positivet integer such that  $|df^{m_j}(h(x))|r_j\geq R/(2L)$, where $L$ is the supremum of $|df|$ on $\P^1(\C)$.  Hence we have
$$ R/(2Lr_j)\leq |df^{m_j}(h(x))|< R/(2r_j).$$ 
\par Let $V_j$ be the connected component of $f^{-m_j}(B(f^{m_j}(h(x)),R))$. Then
$$f^{m_j}:V_j\to B(f^{m_j}(h(x)),R)$$ is biholomorphic for every $j\geq 1$. Shrink $R$ if necessary,  by Koebe distortion theorem, we have
	\begin{equation*}
	1-99^{-99}<|df^{m_j}(h(x))/df^{m_j}(y)|<1+99^{-99},
\end{equation*}
for all $y \in V_j$.
\par Let $V'_j$ be the connected component of $f^{-m_j}(B(f^{m_j}(h(x)),R/(100C_1L))$.  Then we have $$h(W_j)\subset B(h(x),r_j)\subset V_j$$
and 
$$V'_j\subset B(h(x),r_j/L)\subset h(W_j).$$
\par Hence the map $h_j:=f^{m_j}\circ h\circ g_{n_j}:B(g^{n_j}(x),r)\to \P^1(\C)$  is injective  and satisfies 
\begin{equation*}
	B(f^{m_j}(h(x)),R/(100C_1L))\subset h_j(B(g^{n_j}(x),r))\subset B(f^{m_j}(h(x)),R),
\end{equation*}
where $g_{n_j}$ is the inverse map of $g^{n_j}:W_j\to B(g^{n_j}(x),r)$.  By Lemma \ref{conical}, the conclusion follows.
\endproof
\medskip
\section{Improve local conjugacies to  algebraic correspondences}\label{4}
\subsection{Extend a local conjugacy on a CER}


\medskip
\begin{thm}\label{extendcer}
	Let $f,g$ be two non-Latt\`es endomorphisms on $\P^1(\C)$ of degree at least $2$. Let $K_f$ be an invariant CER of $f$  which is not a periodic orbit.  Let $U$ be a connected neighborhood of $K_f$.  Let $h:U\to h(U)$ be a biholomorphic map such that $h\circ f=g\circ h$ on $K_f$.  Then there exists an irreducible algebraic curve $\Gamma\subset \P^1(\C)\times \P^1(\C)$ which is periodic under $(f,g)$ and contains the graph of $h|_{U}$.
\end{thm}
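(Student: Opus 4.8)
The plan is to reduce the statement to the generalized Inou theorem (Theorem \ref{inou}) by producing, in a small disk around a repelling periodic point of $f$ lying in $K_f\cap U_0$, a non-injective holomorphic self-map together with an honest (open) functional equation for $h$. The point that makes this possible is that, although $h\circ f=g\circ h$ is assumed only on the fractal $K_f$, the set $K_f$ is \emph{perfect}: since $f|_{K_f}$ is topologically exact (Definition \ref{defcer}) and $K_f$ is not a periodic orbit, an isolated point of $K_f$ would be relatively open and some forward iterate would collapse $K_f$ to a point. For the same reasons $K_f$ is infinite, its periodic points are dense in $K_f$ and (by expansion) repelling as periodic points of $f$ on $\P^1(\C)$, $K_f$ avoids the critical set of $f$, and there is a uniform radius $\delta_\ast>0$ such that for every $y\in K_f$ and $n\ge 1$ any connected component of $f^{-n}(B(y,\delta_\ast))$ meeting $K_f$ is mapped biholomorphically onto $B(y,\delta_\ast)$ by $f^n$ and sends $B(y,\delta_\ast)\cap K_f$ into $K_f$ (the standard Koebe/bounded-distortion property of conformal expanding repellers). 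Choose a repelling periodic point $p\in K_f\cap U_0$; replacing $f$ and $g$ by a common iterate (this preserves all hypotheses, and a conclusion for the iterates implies one for $f,g$) we may assume $f(p)=p$, so that $h\circ f^n=g^n\circ h$ on $K_f$ for all $n\ge 0$.

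First I would spread the conjugacy off $K_f$. Fix a disk $D=B(p,\delta)\subseteq U_0$ with $\delta\le\delta_\ast$, $\overline D\subseteq U_0$, $f(D)\subseteq U_0$ and $\phi_0(D)\Subset D$, where $\phi_0$ is the contracting local inverse branch of $f$ at $p$. For any connected open $O\subseteq U_0$ meeting $K_f$ with $f^n(O)\subseteq U$, the holomorphic maps $h\circ f^n$ and $g^n\circ h$ on $O$ agree on $O\cap K_f$, which (by perfectness) has an accumulation point in $O$; hence $h\circ f^n=g^n\circ h$ on all of $O$. In particular $h\circ f=g\circ h$ on $D$, and iterating along backward $\phi_0$-orbits gives $h\circ f^n=g^n\circ h$ on $\phi_0^{\,n}(D)$ for every $n\ge 1$.

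Next comes the construction of the non-injective map, which I expect to be the main obstacle. Pick $w\in K_f\cap B(p,\delta/2)$ with $w\neq p$ and set $\rho:=|w-p|/3$. Topological exactness applied to the relatively open set $K_f\cap B(w,\rho)$ yields $N_0$ with $f^n(K_f\cap B(w,\rho))=K_f$ for all $n\ge N_0$. Using the uniform expansion $|(f^n)'|\ge C\lambda^n$ on $K_f$ and Koebe distortion, I fix $N\ge N_0$ large and a point $w'\in K_f\cap B(w,\rho)$ with $f^N(w')=p$ so that the two inverse branches of $f^N$ over $D$ along backward $K_f$-orbits — namely $\eta_0=\phi_0^{\,N}$ with $\eta_0(p)=p$, and $\eta_1$ with $\eta_1(p)=w'$, both defined and univalent on $D$ because $\delta\le\delta_\ast$ — satisfy $\eta_0(D)\Subset D$, $\eta_1(D)\Subset D$ and $\overline{\eta_0(D)}\cap\overline{\eta_1(D)}=\emptyset$ (the branch images shrink like $\lambda^{-N}$ while $|p-w'|\ge 2\rho$ stays fixed). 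Put $W:=D$ and $W':=\eta_0(D)\cup\eta_1(D)$. Then $W'\subseteq W$ is open, $f^N(W')=W$, the map $f^N\colon W'\to W$ is $2$-to-$1$ hence non-injective, and $p\in\eta_0(D)\subseteq W'$ is a repelling fixed point of $f^N$. Moreover $h\circ f^N=g^N\circ h$ holds on $W'$: on $\eta_0(D)=\phi_0^{\,N}(D)$ this is the identity obtained above, and on $\eta_1(D)$ both sides are holomorphic (since $f^N(\eta_1(D))=D\subseteq U$) and agree on the non-discrete set $\eta_1(D)\cap K_f=\eta_1(D\cap K_f)$, hence everywhere on $\eta_1(D)$.

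Finally I would apply Theorem \ref{inou} to the data $(f^N,g^N,h|_D,W,W')$: the iterates $f^N,g^N$ are again non-Latt\`es of degree $\ge 2$, $h|_D$ is non-constant, $W$ is connected, $f^N(W')\subseteq W$, $f^N\colon W'\to W$ is non-injective with a repelling fixed point, and $h\circ f^N=g^N\circ h$ on $W'$. This yields an irreducible algebraic curve $\Gamma\subseteq\P^1(\C)\times\P^1(\C)$ invariant under $(f^N,g^N)$ and containing $\{(z,h(z)):z\in D\}$; in particular $\Gamma$ is periodic under $(f,g)$. To conclude, $\{(z,h(z)):z\in U_0\}$ is a connected analytic curve which coincides with the algebraic curve $\Gamma$ on the nonempty open subset $\{(z,h(z)):z\in D\}$, so the identity theorem applied to a local defining equation of $\Gamma$ along this analytic curve forces the entire graph of $h|_{U_0}$ into $\Gamma$. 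The one genuinely technical point is, as indicated, turning topological exactness of $K_f$ into two disjoint univalent inverse branches of a single iterate over a fixed disk, which rests on the uniform Koebe bounds for conformal expanding repellers and a careful bookkeeping of the scales $\delta,\rho,N$; the passage of the conjugacy from $K_f$ to open sets is, by contrast, immediate once one observes that $K_f$ has no isolated points.
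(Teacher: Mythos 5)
Your proof is correct and follows essentially the same route as the paper: both spread the conjugacy from $K_f$ to open sets via the identity theorem (using that $K_f$ has no isolated points), use topological exactness and uniform expansion to produce two disjoint univalent inverse-branch images compactly contained in a small disk $D$ meeting $K_f$, apply Theorem \ref{inou} to the resulting non-injective map with a repelling fixed point, and then extend the containment from the graph over $D$ to the graph over $U_0$ by analytic continuation. The only cosmetic difference is that you anchor the construction at a repelling periodic point of $K_f$ (after passing to an iterate), whereas the paper works with two shrinking preimage components of an arbitrary point of $K_f\cap D$.
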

\begin{proof}
	Let $D$ be a small disk intersecting $K_f$ such that $D\subset U$. Fix a point $x\in K_f\cap D$. Topological exactness of $f|_{K_f}$ implies the preimages of $x$ under $f|_{K_f}$  are dense in $K_f$. Let $x_n\in K_f$ such that $f^n(x_n)=x$. Let $D_n$ be the connected component of $f^{-n}(D)$ containing $x_n$. Since  $f|_{K_f}$ is expanding, this implies that $\diam D_n\leq C\la^{-n}$ for some $C>0,\la>1$.  Since $K_f$ is not a periodic orbit, for $m$ large enough, we can choose two points $x_m\in K_f, x'_m\in K_f$ such that $f^m(x_m)=x$ and $f^m(x'_m)=x$, moreover $D_m\subset\subset D, D'_m\subset\subset D$, and $\overline{D_m}\cap\overline{D'_m}=\emptyset$, where $D_m$ (resp $D'_m$) is the connected component of $f^{-m}(D)$ containing $x_m$ (resp. $x'_m$). Set $W:=D_m\cup D'_m$. Then $f^m:W\to D$ has two repelling fixed point and is non-injective. Moreover $h\circ f^m=g^m\circ h$ on $K_f\cap W$. Since $h$ is holomorphic and $K_f\cap D$ is non-isolated, we have $h\circ f^m=g^m\circ h$ on $W$. By Theorem \ref{inou}, there exists an irreducible algebraic curve $\Gamma\subset \P^1(\C)\times \P^1(\C)$ which is invariant under $(f^m,g^m)$ and contains the graph of $h|_{D}$. 
	Then $\Gamma$ also contains the graph of $h|_{U}$.  The proof is finished. 
\end{proof}
\medskip
The following theorem will be used in the proof of Theorem \ref{interval}. It is a consequence of the following two results:

(1) Sullivan's rigidity theorem for non-linear CERs \cite{sullivan1986quasiconformal} (see \cite[Theorem 7.7]{ji2022homoclinic} for the precise statement  and see \cite[Section 10.2]{przytycki2010conformal} for a proof).

(2) A characterization theorem  of linear CERs \cite[Theorem 1.1]{ji2022homoclinic}. 
\begin{thm}\label{cerrigidity}
	Let $f$ and $g$ be two edomorphisms on $\P^1(\C)$ of degree at least $2$ such that $f$ is non-exceptional. Let $(f,K_f)$, $(g,K_g)$ be two CERs, $f(K_f)=K_f$, $g(K_g)=K_g$.  Let $h:K_f\to K_g$ be a homeomorphism such that $h\circ f=g\circ h$ on $K_f$. Then the following two conditions are equivalent
	\begin{points}
		
		\item for every periodic point $x\in K_f$ we have $|df^n(x)|=|dg^n(h(x))|$, where $n$ is the period of $x$;
		
		\item there exists a  neighborhood $U$ of $K_f$ and a neighborhood $V$ of $K_g$ such that $h$ extends to a holomorphic or antiholomorphic map $h:U\to V$.
	\end{points}
Moreover, in case that any (hence every) condition of (1), (2) holds, then we have 
\begin{points}
\item[{\rm (iii)}] there is an algebraic curve $\Gamma \subseteq \P^1(\C)\times \P^1(\C)$ whose irreducible components are all periodic such that the graph of $h': K_f\to K_{g'}$ is contained in $\Gamma$, where $(g',h')$ is either $(g,h)$ or $(\overline{g}, \tau_{\conj} \circ h)$.	
\end{points}	
	\end{thm}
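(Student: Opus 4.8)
The plan is as follows. First note that, by the topological exactness condition in Definition~\ref{defcer}, a CER is either a single repelling periodic point — in which case all of (1), (2), (iii) are trivial (for (iii) take $\Gamma=\{p\}\times\P^1(\C)$, where $p\in K_f$, which is irreducible and periodic under $(f,g)$) — or an infinite, hence perfect, compact set; so assume $K_f$ is perfect. The implication \emph{(2)$\Rightarrow$(1)} is then immediate: if $h$ extends to a holomorphic or antiholomorphic map $\widetilde h$ on a neighbourhood $U$ of $K_f$, then $\widetilde h\circ f$ and $g\circ\widetilde h$ are of the same type (both holomorphic or both antiholomorphic) near $K_f$ and agree on $K_f$, so by the identity principle and perfectness of $K_f$ they agree on a neighbourhood of $K_f$; hence $\widetilde h\circ f^n=g^n\circ\widetilde h$ near any periodic point $x$ of period $n$, and differentiating at $x=f^n(x)$, using that the modulus of the derivative is multiplicative under composition (for holomorphic and for antiholomorphic maps) and that $|d\widetilde h(x)|\neq 0$, gives $|df^n(x)|=|dg^n(h(x))|$.

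For \emph{(1)$\Rightarrow$(2)}, the hypothesis that $f$ is non-exceptional enters through the dichotomy between linear and non-linear CERs: by the characterization of linear CERs \cite[Theorem~1.1]{ji2022homoclinic}, a CER of a rational map of degree at least $2$ is linear if and only if the underlying map is exceptional, so $(f,K_f)$ is non-linear. Now $h$ is a topological conjugacy between the CERs $(f,K_f)$ and $(g,K_g)$ which, by (1), preserves the moduli of the multipliers of all periodic orbits, so Sullivan's rigidity theorem for non-linear CERs in the form of \cite[Theorem~7.7]{ji2022homoclinic} (see also \cite[Section~10.2]{przytycki2010conformal}) applies and shows that $h$ extends to a holomorphic or antiholomorphic map between neighbourhoods of $K_f$ and $K_g$. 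After shrinking we may take $\widetilde h:U\to V$ to be a biholomorphism or an anti-biholomorphism, and as above $\widetilde h\circ f=g\circ\widetilde h$ on a neighbourhood of $K_f$. This is (2).

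For \emph{(iii)}, assume (2) and take $\widetilde h$ as just constructed. Since a conformal conjugacy cannot relate a linear and a non-linear CER, $(g,K_g)$ is also non-linear, so by \cite[Theorem~1.1]{ji2022homoclinic} again $g$ is non-exceptional, in particular non-Latt\`es. If $\widetilde h$ is holomorphic, put $(g',h'):=(g,h)$; after replacing $U$ by a union of finitely many small disks covering $K_f$, Theorem~\ref{extendcer} applied to the non-Latt\`es maps $f,g$, the invariant CERs $K_f,K_g$ (which are not periodic orbits), and the biholomorphic conjugacy $\widetilde h$, yields for each component $U_0$ an irreducible curve $\Gamma_{U_0}\subset\P^1(\C)\times\P^1(\C)$ that is periodic under $(f,g)$ and contains the graph of $\widetilde h|_{U_0}$, hence of $h|_{K_f\cap U_0}$; then $\Gamma:=\bigcup_{U_0}\Gamma_{U_0}$ has all irreducible components periodic and contains the graph of $h':K_f\to K_{g'}$. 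If $\widetilde h$ is antiholomorphic, put $(g',h'):=(\overline g,\sigma_{\conj}\circ h)$; then $\sigma_{\conj}\circ\widetilde h$ is a holomorphic conjugacy of $f$ to $\overline g$ on a neighbourhood of $K_f$ carrying $K_g$ onto the invariant CER $\sigma_{\conj}(K_g)=K_{g'}$ of the non-Latt\`es map $\overline g$, so the previous case applies with $g$ replaced by $\overline g$.

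\textbf{Main obstacle.} The crux is the passage from purely topological/metric data — a topological conjugacy preserving the moduli of periodic multipliers — to a conformal conjugacy. This rests entirely on Sullivan's rigidity theory for conformal expanding repellers, and on correctly invoking the characterization of linear CERs \cite[Theorem~1.1]{ji2022homoclinic} both to place ourselves in the non-linear case (which is where "$f$ non-exceptional" is used) and to deduce that $g$ is non-Latt\`es, which is what makes Theorem~\ref{extendcer} (equivalently Theorem~\ref{inou}) available. Everything else — the identity-principle continuations, shrinking to a biholomorphism, and gluing the finitely many component-wise algebraic curves — is routine.
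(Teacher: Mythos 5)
Your proposal is essentially the paper's own (citation-level) argument, fleshed out: the paper deduces the equivalence of (1) and (2) from Sullivan's rigidity for non-linear CERs (\cite[Theorem 7.7]{ji2022homoclinic}, \cite[Section 10.2]{przytycki2010conformal}), with the non-linearity of $(f,K_f)$ supplied by the linear-CER characterization \cite[Theorem 1.1]{ji2022homoclinic}, and part (iii) is obtained, exactly as you do, by feeding the conformal extension (passing to $(\overline{g},\sigma_{\conj}\circ h)$ in the antiholomorphic case) into Theorem \ref{extendcer}. So the approach matches and the main chain of reasoning is sound.

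Three details in your write-up deserve correction, though none changes the strategy. First, the periodic-orbit case is not ``trivial'' in the way you claim: for a finite CER, (2) holds vacuously (any map of a finite set extends holomorphically), while (1) is a genuine multiplier condition, so the literal equivalence fails there; the correct move is to exclude this degenerate case (as the precise form of Sullivan's rigidity does, and as the paper implicitly does, since it only applies the theorem to Cantor-set CERs), not to declare all three statements trivially true --- note also that for an orbit of period greater than one your $\Gamma=\{p\}\times\P^1(\C)$ contains only part of the graph, and you would need the union of the vertical lines over the whole orbit. Second, in (2)$\Rightarrow$(1) you assert $|d\widetilde h(x)|\neq 0$ without justification: under the literal reading of (2) the extension need not be locally injective, and a critical point of local degree $k$ at $x$ would only yield $|dg^n(h(x))|=|df^n(x)|^k$; either invoke the form of the cited theorem in which the extension is a conformal homeomorphism of neighborhoods, or supply an argument ruling out $k\geq 2$. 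Third, your step ``$(g,K_g)$ non-linear $\Rightarrow$ $g$ non-exceptional'' (needed to make Theorem \ref{extendcer} applicable, since it requires both maps to be non-Latt\`es) uses the converse direction of the characterization, namely that exceptional maps admit only linear CERs; this is the opposite direction from the one used elsewhere in the paper, so you should check that \cite[Theorem 1.1]{ji2022homoclinic} is indeed an equivalence in the form you invoke, or prove that easy direction separately.
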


%
%

\medskip
\subsection{Improve a local conjugacy on an interval}

\begin{defi}\label{repeller}
	Let $M$ be a smooth manifold and let $X\subset M$ be a compact subset. Let $f$ be a continuous map defined on a neighborhood $U$ of $X$ such that $f(X)\subset X$. Then $X$ is called a {\em repeller} if there exists an open set $U'\subset U$ such that $X=\cap_{n\geq 0}f^{-n}(U')$.
\end{defi}
\medskip
\par We need the following lemma. A proof can be found in Przytycki-Urbanski \cite[Lemma 6.1.2]{przytycki2010conformal}. 
\begin{lemma}\label{openrepeller}
	Let $M$ be a smooth manifold and let $X\subset M$ be a compact subset. Let $f$ be a continuous  map defined on a neighborhood $U$ of $X$ such that $f(X)\subset X$.  Then we have:
	\begin{points}
		\item Assume $f|_{U}$ is an open map. If $X$ is a repeller then $f|_{X}$ is an open map.
		\item Conversely if $f:X\to X$ is distance expanding and $f|_{X}$ is an open map, then $X$ is a repeller. 
	\end{points}
\end{lemma}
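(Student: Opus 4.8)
The plan is a direct open-mapping argument. Fix a metric $d$ on $M$ (say from a Riemannian structure) and an open set $U'\subset U$ witnessing that $X$ is a repeller, so $X=\bigcap_{n\ge 0}f^{-n}(U')$. Let $V\subset X$ be relatively open, write $V=\widetilde V\cap X$ with $\widetilde V\subset M$ open, and fix $x\in V$; I want $f(x)$ to be an interior point of $f(V)$ in $X$. By continuity of $f$ at $x$ (and since $f(x)\in X\subset U'$) choose an open $W\subset M$ with $x\in W\subset \widetilde V\cap U'$ and $f(W)\subset U'$. Since $f|_U$ is open, $f(W)$ is open in $M$, so $f(W)\cap X$ is a relatively open neighbourhood of $f(x)$ in $X$, and it remains to check $f(W)\cap X\subset f(V)$. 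If $y=f(w)\in f(W)\cap X$ with $w\in W$, then $w\in U'$ and, because $y\in X$ has its whole forward orbit in $X\subset U'$, also $f^{n}(w)=f^{n-1}(y)\in U'$ for every $n\ge 1$; hence $w\in\bigcap_{n\ge 0}f^{-n}(U')=X$, and since $w\in W\subset\widetilde V$ we get $w\in\widetilde V\cap X=V$, so $y\in f(V)$. Thus $f|_X$ is open.

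\textbf{Part (ii).} Here the plan has three steps. First, from the distance-expanding property (constants $\lambda>1$, $\eta>0$: points of the domain within distance $\eta$ are $\lambda$-separated by $f$, whence $f$ is locally injective) together with openness of $f|_X$, I would produce a uniform ``Lebesgue radius'' $\xi\in(0,\eta/2)$ such that for every $x\in X$ the map $f$ restricts to a homeomorphism $B_X(x,\xi)\to f(B_X(x,\xi))\supset B_X(f(x),\xi)$ whose inverse $g_x\colon B_X(f(x),\xi)\to B_X(x,\xi)$ is $\lambda^{-1}$-Lipschitz; openness of $f|_X$ is exactly what guarantees the image contains a full ball, and the uniformity of $\xi$ comes from compactness of $X$. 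Second, by uniform continuity of $f$ near $X$, fix $\delta\in(0,\xi/4)$ small enough that $N_\delta(X):=\{p\in M:\ d(p,X)<\delta\}$ has compact closure inside $U$ and that $d(f(p),f(q))<\xi/4$ whenever $d(p,X)<\delta$, $q\in X$ and $d(p,q)<2\delta$; set $U':=N_\delta(X)$. Since $f(X)\subset X$, trivially $X\subset\bigcap_{n\ge 0}f^{-n}(U')$. Third, for the reverse inclusion take $p$ with forward orbit defined and contained in $U'$, choose $x_n\in X$ with $d(f^{n}(p),x_n)<\delta$, note that $(x_n)_{n\ge 0}$ is a pseudo-orbit of $f|_X$ with jumps $<\xi/2$, form the shadowing point $z:=\lim_{N\to\infty}g_{x_0}\circ\cdots\circ g_{x_{N-1}}(x_N)\in X$ (the limit exists because the $g_{x_i}$ are uniform contractions and $X$ is complete), and check $d(f^{n}(z),x_n)<\xi$, hence $d(f^{n}(p),f^{n}(z))<\delta+\xi<\eta$ for all $n$. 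The distance-expanding property applied repeatedly to the pair $(f^{n}(p),f^{n}(z))$ then gives $d(f^{n}(p),f^{n}(z))\ge\lambda^{n}d(p,z)$, which can stay $<\eta$ only if $d(p,z)=0$; so $p=z\in X$, and $X=\bigcap_{n\ge 0}f^{-n}(U')$ is a repeller.

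\textbf{Main obstacle.} The technical heart of Part (ii) is the first step: upgrading the pointwise openness of $f|_X$ together with the expansion estimate into the \emph{uniform} inverse branches $g_x$, defined on $B_X(f(x),\xi)$ for a radius $\xi$ independent of $x\in X$. Once these are in hand, the choice of $U'$ and the contraction/expansion dichotomy are routine bookkeeping with the constants $\lambda,\eta,\xi,\delta$. This is essentially the content of \cite[Lemma 6.1.2]{przytycki2010conformal}, to which one may also simply refer.
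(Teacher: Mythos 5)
Your part (i) is correct, and since the paper gives no argument of its own for this lemma (it only cites Przytycki--Urba\'nski, Lemma 6.1.2), there is nothing there to compare against; your open-mapping argument is the natural one.

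Part (ii), however, has a genuine gap at its last step. You apply the distance-expanding estimate repeatedly to the pairs $(f^n(p),f^n(z))$, but the hypothesis only says that $f\colon X\to X$ is distance expanding, i.e.\ the inequality $d(f(u),f(v))\ge\lambda\,d(u,v)$ is available for $u,v\in X$ with $d(u,v)\le\eta$; the point $f^n(p)$ is precisely \emph{not} known to lie in $X$ (that is what you are trying to prove), and mere continuity of $f$ off $X$ gives no expansion there. This is not a cosmetic issue: under the literal hypotheses the statement is false. Take $M=\mathbb{R}^2$, $X$ the unit circle, $U$ an annulus around it, and in polar coordinates $f(r,\theta)=\bigl(\tfrac{1+r}{2},\,2\theta\bigr)$. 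Then $f|_X$ is the doubling map, hence open and distance expanding, but every point with $r$ close to $1$ has its entire forward orbit in any prescribed neighbourhood $U'$ of $X$, so $\bigcap_{n\ge0}f^{-n}(U')\supsetneq X$ for every such $U'$ and $X$ is not a repeller. Any correct proof must therefore use expansion of $f$ on a \emph{neighbourhood} of $X$ (this is the setting of Przytycki--Urba\'nski, where the map is smooth/conformal and derivative-expansion on $X$ extends to a neighbourhood by continuity; in the paper's application $f$ is a rational map uniformly expanding on a CER, so this costs nothing). With that strengthened hypothesis your shadowing scheme does work: the orbit of $z$ stays in $X$, the orbit of $p$ stays $\delta$-close to $X$, and the expansion valid near $X$ forces $p=z$.

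Two smaller points of bookkeeping in the same step: with pseudo-orbit jumps only $<\xi/2$ the telescoped error of the inverse branches is $\le\frac{\xi/2}{\lambda-1}$, so the compositions $g_{x_n}\circ\cdots\circ g_{x_{N-1}}(x_N)$ need not stay inside the balls of radius $\xi$ unless $\lambda\ge 2$; you should shrink the jump size to something like $(1-\lambda^{-1})\xi/2$ (i.e.\ shrink $\delta$ and the uniform-continuity threshold accordingly) so that both the composability and the bound $d(f^n(z),x_n)<\xi$ hold for all $\lambda>1$. Finally, citing Lemma 6.1.2 of Przytycki--Urba\'nski for the ``technical heart'' is circular as phrased, since that lemma \emph{is} the statement being proved; the correct reference for the uniform inverse branches is their earlier result on open distance expanding maps (the $\xi$ of Chapter 4), whose compactness proof you sketch correctly.
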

\medskip
\begin{rem}
 Let $N$ be a smooth manifold, $M\subset N$ be  a submanifold, and $X\subset M$ be a compact subset.  Let $f:N\to N$ be a continuous map such that $f(M)\subset M$. Then there are two definitions for $X$ being repellers, depending on the ambient space is $M$ or $N$.  In the following we will emphasize that $K$ is seen as a subset of which ambient space. In particular when a rational map $f$ is defined over $\R$, then $\R\cup\left\{\infty\right\} $ is invariant under $f$. A compact subset $K\subset \R\cup\left\{\infty\right\} $ may be a repeller for  $f|_{\R\cup\left\{\infty\right\} }$, but not for $f$.
\end{rem}
\medskip
\proof[Proof of Theorem \ref{interval}]
Let $\tilde{f}:=f|_{N_f}$. Since $\tilde{f}$ has positive topological entropy, by the variational principle,  there exists an invariant ergodic measure $\mu$ of $\tilde{f}$ such that the metric entropy $h_\mu(\tilde{f})>0$.  By  Ruelle's inequality \cite[Theorem 10.1.1]{przytycki2010conformal}, $h_\mu(\tilde{f})\leq \max\left\{0, \chi_\mu(\tilde{f})\right\}$, where $\chi_\mu(\tilde{f})$ is the Lyapunov exponent. This implies $\chi_\mu(\tilde{f})$ is a positive finite number.  By Katok's theory for endomorphisms \cite[Theorem 3]{gelfert2010repellers}, there exists a repeller $K$ of $f|_{N_f}$ (here the ambient space is $N_f$), which is topologically exact and uniformly expanding, moreover the topological entropy of $f|_K$ is positive, which implies that $K$ is not a periodic orbit.

 Passing to an iterate of $f$ and $g$, we assume $K$ is $f$-invariant and there exists a repelling $f$-fixed point $x\in K\cap N_f^{\circ}$.  By assumption (ii), $h(x)\in h(K)$ is a repelling fixed point of $g$.  Let $I$ be a small open interval centered at $h(x)$.  Since $g:h(K)\to h(K)$ is topologically exact,  one can construct the following horseshoe map: there are two open intervals $I_1\subset\subset I$, $I_2\subset \subset I$,  such that $g^m:I_1\to I$ and $g^m:I_2\to I$ are bijective for some $m\geq 1$, moreover there exists $L>1$ such that $|dg^m(y)|>L$ for every $y\in I_1\cup I_2$. We define 
$$K_g:=\left\{y\in I_1\cup I_2: f^{mn}(y)\in I_1\cup I_2\;\text{for every}\;n\geq 0\right\}.$$
Then $K_g$ is a repeller of $g|_{N_g}$ (here the ambient space is $N_g$) which is also topologically exact and uniformly expanding.  Since $h(K)$ is also a repeller of $g|_{N_g}$ (here the ambient space is $N_g$), shrinking the interval $I$ if necessary, we have $K_g\subset h(K)$.  We define $K_f:=h^{-1}(K_g)$. Then $K_f$ is a repeller of $f|_{N_f}$ (here the ambient space is $N_f$) which is also topologically exact and uniformly expanding.  
\par By Lemma \ref{openrepeller} (i), since $f^m$ (resp. $g^m$) is an open map on a neighborhood of $K_f$ (resp. $K_g$), $f^m:K_f\to K_f$ and $g^m:K_g\to K_g$ are open maps.  Again by Lemma \ref{openrepeller} (ii), $K_f$ (resp. $K_g$)  is a repeller of $f$ (resp. $g$), where the ambient space is $\P^1(\C)$.  Hence $K_f$ and $K_g$ are CERs.  By Theorem \ref{cerrigidity}, $h|_{K_f}$ can be extended to a holomorphic or antiholomorphic map $\tilde{h}$ on a neighborhood $U$ of $K_f$.  Since $K_f\subset \R\cup \infty$, we can actually let $\tilde{h}$ be holomorphic.  Thus we have $\tilde{h}\circ f^m=g^m\circ \tilde{h}$ on $K_f$. By Theorem \ref{extendcer}, there exists an irreducible algebraic curve $\Gamma\subset \P^1(\C)\times \P^1(\C)$ which is periodic under $(f,g)$, and contains the graph of $h|_{K_f}$.  We conclude the result since by our construction, the graph of $h|_{K_f}$ is a Cantor set. 
\endproof

\section{Real analytic local rigidity of Julia sets} \label{6}
The aim of this section is to prove the following two theorems, which are the real analytic case of Theorem \ref{thmmeasureversion} and Theorem \ref{conicalrigiditytoget} respectively.
\begin{thm}\label{measurerigidityrealan}
Let $f,g$ be two endomorphisms on $\P^1(\C)$ of degree at least $2$.  
Assume that one of them is non-exceptional.
Let $\mu$ (resp. $\nu$) be a non-atomic invariant ergodic probability measure with positive Lyapunov exponent of $f$ (resp. $g$). 
Let $U\subset \P^1(\C)$ be an open subset such that 
$U\cap C_g$ is connected and $U\cap \Supp \nu\neq \emptyset$.  Let $h:U\to h(U)\subseteq \P^1(\C)$ be a real analytic diffeomorphism preserving the orientation such that   
\begin{points}
\item $h(U\cap \sJ_g)=h(U)\cap \sJ_f$; if $\sJ_f$ is smooth, we assume further that $h_\ast (\mu_g)\propto \mu_f$ on $h(U)$;
\item  $h_\ast (\nu)$ is equivalent to  $\mu$ on $h(U)$. 
\end{points}
Then there exist positive integers $a$ and $b$ and an irreducible algebraic curve $Z$ in $\P^1(\C)\times \P^1(\C)$ such that $Z$ is preperiodic under $(f^a,g^b)$ and contains the graph $\{(h(z),z), z\in U\cap C_f\}$ of $h$. 
Moreover, if $\sJ_f$ or $\sJ_g$ is not contained in any circle, then $h$ is holomorphic. 
\end{thm}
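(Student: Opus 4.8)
\emph{Strategy.} The plan is to rerun the proof of the holomorphic case (Lemma~\ref{key} together with Lemma~\ref{conical}), replacing its three complex-analytic ingredients---Montel's theorem, Levin's theorem \cite[Theorem~1]{Levin1990}, and the generalized Inou theorem~\ref{inou}---by the real-analytic substitutes developed in the paper, namely the cone-normality Lemma~\ref{lemconenormal}, the real-analytic Levin theorem~\ref{thmlevinrealan}, and the horseshoe-rigidity Theorem~\ref{thmrealanlyhos}. As in the proof of Theorem~\ref{measurerigidity}, if one of $f,g$ were exceptional then $\sJ_f$ and $\sJ_g$ would be smooth, hence by \cite{zdunik1990parabolic} both $f$ and $g$ would be exceptional, contrary to hypothesis; so both are non-exceptional, and when $\sJ_g$ is smooth we take $\nu=\mu_g$ and $\mu=\mu_f$. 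After shrinking $U$ around a point of $U\cap\Supp\nu$ at which $h$ is a local diffeomorphism (the Jacobian of $h$ is real-analytic and non-negative, hence positive on a dense open set) we may assume $h$ is a real-analytic diffeomorphism with bounded distortion, in particular bi-Lipschitz; by Lemma~\ref{key} there is then a point $x\in U\cap\Supp\nu$, not $g$-preperiodic, that is bi-conical for $(g,h,f)$ (only the existence of $x$ is used here; the lower-density statement is needed only in the $C^1$ case).

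\emph{The symmetries.} From $x$, following the first half of the proof of Lemma~\ref{conical}, one forms the injective real-analytic maps $h_j:=f^{m_j}\circ h\circ g_{n_j}$ on a disk $D$ meeting $\sJ_g$; these have uniformly bounded distortion and images whose diameters are bounded above and below, and the renormalized maps $\sigma_j:=h_j\circ h_1^{-1}$ are injective real-analytic local symmetries of $\sJ_f$, with $(\sigma_j)_\ast\mu_f\propto\mu_f$ when $\sJ_f$ is smooth (by total invariance of the maximal entropy measures, exactly as in Lemma~\ref{conical}). Where the holomorphic proof uses Montel's theorem to see that $\{\sigma_j\}$ is a normal family all of whose limits are non-constant, I would invoke the cone-normality Lemma~\ref{lemconenormal} to obtain the same conclusion in the real-analytic category; in particular some subsequence of $\{\sigma_j\}$ converges in the $C^1$-topology to a non-constant real-analytic map $\sigma$.

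\emph{The algebraic correspondence and the holomorphy.} Suppose first $\sJ_f$ is not contained in a circle. Applying the real-analytic Levin theorem~\ref{thmlevinrealan} to the family $\{\sigma_j\circ\sigma^{-1}\}$ (local symmetries of $\sJ_f$, preserving $\mu_f$ up to scale when $\sJ_f$ is smooth, tending to $\id$ in $C^1$) and using that $f$ is non-exceptional, we get $\sigma_j=\sigma$, hence $h_j=h_{j_0}$, for $j$ large; then, as in Lemma~\ref{conical}, there are positive integers $a,b$ and a functional equation $f^a\circ h_{j_0}=h_{j_0}\circ g^b$ on an open set $W\subset D$ on which $g^b\colon W\to D$ is non-injective with two repelling fixed points. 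If instead $\sJ_f\subset C_f$ is a circle, Theorem~\ref{thmlevinrealan} is not available, but the $C^1$-Levin theorem~\ref{thmlevinnf} still forces all but finitely many $\sigma_j$ to fix every repelling periodic point of $f$ in $W$; since these are dense in $\sJ_f\cap W$ and $\sigma_j|_{C_f}$ is real-analytic on the one-dimensional $C_f$, we get $\sigma_j|_{C_f}=\id$, hence $h_j|_{C_g}=h_{j_0}|_{C_g}$, for $j$ large, so that $f^a\circ h_{j_0}=h_{j_0}\circ g^b$ on $W\cap C_g$; differentiating at a periodic point of $g^b$ in $W\cap C_g$, and using that the tangential derivative of a holomorphic self-map of a circle has modulus equal to that of its complex multiplier, shows that the multipliers of $f^a$ and $g^b$ along the corresponding periodic orbits agree in modulus. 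In either case the horseshoe-rigidity Theorem~\ref{thmrealanlyhos} (the real-analytic analogue of Theorem~\ref{inou}, and in the circle case built on the CER-rigidity results of Section~\ref{4}), applied to the orientation-preserving real-analytic map $h_{j_0}$ on $W$ (respectively to a horseshoe $K\subset\sJ_g\cap C_g$ contained in $W$), produces an irreducible algebraic curve $\Gamma$ invariant under $(f^a,g^b)$ and containing the graph of $h_{j_0}$ over $C_f$, with no complex conjugation since $h_{j_0}$ inherits the orientation-preservation of $h$. Pulling $\Gamma$ back by $(f^{m_1},g^{n_1})$, exactly as in the last paragraph of the proof of Lemma~\ref{conical}, gives an irreducible algebraic curve $Z$, preperiodic under $(f^a,g^b)$, containing $\{(h(z),z):z\in U\cap C_f\}$. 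Finally, if $\sJ_f$ (equivalently $\sJ_g$, since $h(U\cap C_g)=h(U)\cap C_f$ and $h(U)$ is open) is not contained in a circle, then $C_f=C_g=\P^1(\C)$ and $Z$ contains the full real two-dimensional graph $\{(h(z),z):z\in U\}$; as $Z$ is a complex algebraic curve, $h$ satisfies the Cauchy--Riemann equations on a dense open subset of $U$, hence on all of $U$, so $h$ is holomorphic.

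\emph{Main obstacle.} The real crux is the replacement of Montel's theorem---the cone-normality Lemma~\ref{lemconenormal}, and the check that the limiting symmetries are non-constant and converge in $C^1$---together with the real-analytic Levin theorem~\ref{thmlevinrealan}, whose proof must operate with $\R$-linear approximations of the symmetries and exploit that $\sJ_f$ is not contained in a circle to produce three non-collinear repelling periodic points. The circle case then demands the additional bookkeeping above (descending to $C_f$ and $C_g$, noting that the multiplier hypothesis of Theorem~\ref{cerrigidity} becomes automatic, and threading the result through the CER-rigidity machinery of Section~\ref{4}) and care to discard the spurious complex conjugation.
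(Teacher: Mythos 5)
Your treatment of the case where $\sJ_g$ is not contained in a circle is essentially the paper's own proof (Lemma \ref{conicalrealanaly}): a bi-conical point from Lemma \ref{key}, Lemma \ref{lemconenormal} in place of Montel, Theorem \ref{thmlevinrealan} in place of Levin, Theorem \ref{thmrealanlyhos} in place of Theorem \ref{inou}, and then the pull-back step from the end of Lemma \ref{conical}. The circle case is where you and the paper part ways, and it is where your argument has a genuine gap. Theorem \ref{thmlevinnf}, applied to a fixed repelling periodic point $p$, only yields that the set of $j$ with $\sigma_j(p)\neq p$ is finite, and this finite exceptional set depends on $p$. Your inference that ``all but finitely many $\sigma_j$ fix every repelling periodic point of $f$ in $W$'' exchanges the quantifiers: it is perfectly consistent with the per-point statement that each individual $\sigma_j$ moves all but finitely many periodic points (enumerate the periodic points $p_1,p_2,\dots$ and imagine $\sigma_j$ fixing exactly $p_1,\dots,p_j$), in which case no $\sigma_j$ restricts to the identity on $C_f$ and the functional equation on $W\cap C_g$ never materializes. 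The paper's proof of Theorem \ref{thmlevinrealan} confronts exactly this difficulty and resolves it with the rescalings $\delta_n(z)=\la^{n}\sigma_i(\la^{-n}z)$, the sets $N_p$, and three non-collinear repelling periodic points --- and that resolution is precisely what is unavailable when $\sJ_f$ lies in a circle. So, as written, your circle case does not go through.

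The paper's route in the circle case is much shorter and bypasses all of this: if $\sJ_g$ is contained in a circle then so is $\sJ_f$ (via $h$ and Theorem \ref{thmeremen}), one normalizes $C_f=C_g=\R\cup\{\infty\}$, and the real-analytic restriction $h|_{U\cap C_g}$ complexifies, after shrinking $U$, to an injective holomorphic map $h'$ with $h'=h$ on $U\cap C_g$; replacing $h$ by $h'$, the holomorphic Lemma \ref{conical} applies verbatim and already gives the curve containing the graph over the arc. If you wish to salvage your CER detour instead, note two further problems: the functional equation you would obtain holds only on the one-dimensional set $W\cap C_g$, so Theorem \ref{thmrealanlyhos} (which requires the equation on the open sets $W_1\cup W_2$) does not apply and you must route through Theorem \ref{cerrigidity} and Theorem \ref{extendcer}, constructing the horseshoe inside $C_g$ and verifying the multiplier condition; and the complex conjugation in Theorem \ref{cerrigidity}(iii) cannot be discarded by appealing to the orientation of $h$ on the two-dimensional $U$, since a real-analytic map of an arc extends both holomorphically and antiholomorphically --- the correct fix, as in the proof of Theorem \ref{interval}, is that one may always choose the holomorphic extension because the horseshoe lies in $\R\cup\{\infty\}$.
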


\medskip

\begin{thm}\label{conicalrigidityrealan}
Let $f,g$ be two endomorphisms on $\P^1(\C)$ of degree at least $2$. 
Assume that one of them is non-exceptional .
 Let $U\subset \P^1(\C)$ be an open subset such that 
$U\cap C_f$ is connected.
Let $h:U\to h(U)\subseteq \P^1(\C)$ be a real analytic diffeomorphism preserving the orientation. Assume that
\begin{points}
	\item $h(U\cap \sJ_g)=h(U)\cap \sJ_f$; if $\sJ_f$ is smooth, we assume further that $h_\ast (\mu_g)\propto \mu_f$ on $h(U)$;
	\item the Hausdorff dimension of non-conical points of $g$ is $0$.
\end{points}
Then there exist positive integers $a$ and $b$ and an irreducible algebraic curve $Z$ in $\P^1(\C)\times \P^1(\C)$ such that $Z$ is preperiodic under $(f^a,g^b)$ and contains the graph $\{(h(z),z), z\in U\cap C_f\}$ of $h$. 
Moreover, if $\sJ_f$ or $\sJ_g$ is not contained in any circle, then $h$ is holomorphic. 
\end{thm}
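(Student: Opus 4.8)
\textit{Proof proposal.}

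The plan is to follow the proof of the holomorphic statement---Lemma~\ref{conical} together with the proof of Theorem~\ref{conicalrigidity}---almost line for line, substituting the three ingredients special to holomorphic maps: Montel's theorem by the real analytic normality criterion Lemma~\ref{lemconenormal}, Levin's theorem \cite[Theorem~1]{Levin1990} by its real analytic counterpart Theorem~\ref{thmlevinrealan}, and the generalized Inou theorem~\ref{inou} by the real analytic horseshoe rigidity Theorem~\ref{thmrealanlyhos}. To begin, exactly as in the proof of Theorem~\ref{conicalrigidity}: if one of $f,g$ were exceptional then $\sJ_f,\sJ_g$ would be smooth, hence both $f,g$ exceptional by \cite{zdunik1990parabolic}, contrary to hypothesis; so both are non-exceptional. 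The real analytic set $\{\det Dh=0\}$ cannot contain an open piece of $\sJ_g$ (otherwise, by (i) and Eremenko's Theorem~\ref{thmeremen}, $\sJ_g$ would lie in a circle, and then the set is discrete), so after shrinking $U$ about a point of $\sJ_g$ where $Dh$ is non-degenerate I may assume $h$ is bi-Lipschitz on $U$; the same reasoning shows $\sJ_f$ lies in a circle if and only if $\sJ_g$ does, i.e.\ $C_f=\P^1(\C)$ if and only if $C_g=\P^1(\C)$.

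Next I would produce a bi-conical point for $(g,h,f)$ by reusing verbatim the construction in the proof of Theorem~\ref{conicalrigidity}: an invariant CER $K\subset h(U)$ of a suitable iterate of $f$ which is not a periodic orbit has positive Hausdorff dimension, hence so does $h^{-1}(K)\subset U\cap\sJ_g$; by hypothesis~(ii) the latter contains a non-$g$-preperiodic conical point $x$ with $h(x)\in K$, and uniform expansion on $K$ yields times $n_j,m_j\to\infty$ witnessing that $x$ is bi-conical, with constants $r,R$. Passing to a subsequence so that $g^{n_j}(x)$ converges, fix a disk $D$ with $B(g^{n_j}(x),r/4)\subset D\subset B(g^{n_j}(x),r)$: the maps $h_j:=f^{m_j}\circ h\circ g_{n_j}$ are real analytic, orientation preserving and injective on $D$, and of uniformly bounded distortion by the bounded-distortion refinement of conicality combined with the bi-Lipschitz bound on $h$. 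Covering $\sJ_f$ by finitely many small disks and passing to a further subsequence, fix $V$ with $V\subset h_j(D)$ for every $j$, and set $\sigma_j:=h_j\circ h_1^{-1}|_V$: these are real analytic injective maps of uniformly bounded distortion with $\diam\sigma_j(V)$ bounded above and below, so Lemma~\ref{lemconenormal} makes $\{\sigma_j\}$ a normal family all of whose limits are non-constant real analytic maps. By construction each $\sigma_j$ is an orientation-preserving local symmetry of $\sJ_f$, and $(\sigma_j)_\ast(\mu_f)\propto\mu_f$ when $\sJ_f$ is smooth, by total invariance of the maximal entropy measures.

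Assume first $C_f=\P^1(\C)$. Normalizing the $\sigma_j$ to fix a common point and passing to a subsequence with $\sigma_j\to\sigma$ in the $C^1$ topology, the maps $\tau_{ij}:=\sigma_i\circ\sigma_j^{-1}$ are real analytic local symmetries of $\sJ_f$ (carrying the measure condition when $\sJ_f=\P^1(\C)$) converging to $\id$ in $C^1$ as $i,j\to\infty$; by Theorem~\ref{thmlevinrealan} and non-exceptionality of $f$, $\tau_{ij}=\id$ for $i,j$ large, so $\{\sigma_j\}$ is eventually constant and $h_j=h_1$ on $D$ for large $j$. From here the rest of the proof of Lemma~\ref{conical} is purely formal---it composes maps only up to the appeal to Inou---and delivers a domain $W$ with $\overline W\subset D$, positive integers $a,b$, and $f^a\circ h_1=h_1\circ g^b$ on $W$, with $g^b:W\to D$ non-injective and carrying two repelling fixed points; moreover $h_1(W\cap\sJ_g)=h_1(W)\cap\sJ_f$, and $W\cap\sJ_g$ contains an invariant horseshoe of $g^b$, not a periodic orbit, that $h_1$ conjugates to a CER of $f^a$ inside $\sJ_f$, the matching of moduli of multipliers at periodic points coming either from $h_\ast(\mu_g)\propto\mu_f$ (when $\sJ_f$ is smooth) or from non-degeneracy of $Dh_1$ (conjugate derivatives at periodic points). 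Theorem~\ref{thmrealanlyhos} then yields an irreducible algebraic curve $\Gamma$, periodic under $(f^a,g^b)$, containing $\{(h_1(z),z):z\in D\cap C_f\}$; since $C_f=\P^1(\C)$ it contains the full graph $\{(h_1(z),z):z\in D\}$, which forces $h_1$---hence $h$---to be holomorphic (an irreducible complex algebraic curve containing a real analytic graph over an open subset of $\C$ is the Zariski closure of that graph, so the graph coincides near a generic point with one of the finitely many holomorphic local sections of the projection). Pulling $\Gamma$ back by $(f^{m_1},g^{n_1})$ and invoking the preperiodicity lemma from the proof of Lemma~\ref{conical}---after replacing $(f^a,g^b)$ by an iterate if needed---produces the asserted curve $Z$, preperiodic under $(f^a,g^b)$ and containing $\{(h(z),z):z\in U\cap C_f\}$. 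In the remaining case $C_f\ne\P^1(\C)$, where Theorem~\ref{thmlevinrealan} is not available, I would instead exploit that everything is essentially one-dimensional: the bi-conical construction restricted to the real analytic arcs $C_g$ and $C_f$ produces a horseshoe conjugacy between Cantor subsets of $\sJ_g$ and $\sJ_f$, and one concludes via the one-dimensional CER rigidity Theorem~\ref{cerrigidity} (whose multiplier hypothesis is again supplied by $h_\ast(\mu_g)\propto\mu_f$ or by non-degeneracy of $Dh$) followed by Theorem~\ref{extendcer}, obtaining $\Gamma$ and then $Z$.

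The principal difficulty I anticipate is not any single cited result but their interfaces: arranging the real analytic normal family $\{\sigma_j\}$ so that Lemma~\ref{lemconenormal} genuinely applies---uniform distortion, two-sided diameter bounds, and non-constancy of every limit---and so that the normalized differences $\tau_{ij}$ converge to $\id$ in the $C^1$ topology demanded by Theorem~\ref{thmlevinrealan}; handling the (thin, by Eremenko) degeneracy locus of $Dh$ so that the bi-conical point and the periodic points that feed the rigidity results can be chosen off it; and the separate treatment of the circle case, where Levin's theorem genuinely fails and one must instead manufacture and analyze the conjugacy inside a one-dimensional real analytic arc.
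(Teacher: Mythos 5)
Your treatment of the case $C_f=\P^1(\C)$ is essentially the paper's own argument: the paper proves a real analytic version of Lemma~\ref{conical} (Lemma~\ref{conicalrealanaly}) by running the bi-conical construction of Theorem~\ref{conicalrigidity}, replacing Montel by Lemma~\ref{lemconenormal} and Levin by Theorem~\ref{thmlevinrealan}, and then invoking Theorem~\ref{thmrealanlyhos}; the only cosmetic difference is that the paper uses Theorem~\ref{thmrealanlyhos} merely to get that $h_1$ (hence $h$, being orientation preserving and real analytic on the connected $U$) is conformal and then holomorphic, and finishes by applying the already-proved holomorphic Lemma~\ref{conical}, whereas you extract the curve from Theorem~\ref{thmrealanlyhos} directly. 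That part is fine.

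The genuine gap is in your circle case ($C_f\neq\P^1(\C)$). You propose that ``the bi-conical construction restricted to the real analytic arcs $C_g$ and $C_f$ produces a horseshoe conjugacy between Cantor subsets'' and then to apply Theorem~\ref{cerrigidity} and Theorem~\ref{extendcer}. But the hypothesis only gives a local \emph{symmetry} of Julia sets, not a conjugacy: in every other case the conjugacy relation $f^a\circ h_1=h_1\circ g^b$ is manufactured precisely by the Levin-type finiteness step (stabilization $h_j=h_{j_0}$), and in the circle case Theorem~\ref{thmlevinrealan} is unavailable (as you note) while you supply no substitute, so no conjugacy is ever produced. Moreover Theorem~\ref{cerrigidity} needs both a conjugacy $h\circ f=g\circ h$ on CERs and equality of multiplier moduli at periodic points; the measure hypothesis $h_\ast(\mu_g)\propto\mu_f$ is only assumed when $\sJ_f$ is \emph{smooth}, which fails when $\sJ_f$ is a Cantor subset of a circle, and non-degeneracy of $Dh$ yields multiplier matching only once one already has a differentiable conjugacy at the periodic points --- again presupposing the missing conjugacy; finally Theorem~\ref{extendcer} needs a holomorphic map on a neighborhood, not a real analytic map of an arc. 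The paper's Lemma~\ref{conicalrealanaly} handles this case by a different and simpler device you did not use: when $\sJ_g$ (hence $\sJ_f$, by Theorem~\ref{thmeremen}) lies in a circle, one normalizes $C_f=C_g=\R\cup\{\infty\}$, complexifies $h|_{U\cap(\R\cup\{\infty\})}$ after shrinking to an injective \emph{holomorphic} map $h'$ agreeing with $h$ on $U\cap C_g$, and applies the holomorphic Lemma~\ref{conical} to $h'$, so that the holomorphic Levin theorem (Theorem~\ref{thmlevin}, with the measure condition exactly when $\sJ_f$ is smooth) becomes available; the resulting curve contains the graph of $h'=h$ over $U\cap C_g$, which is what the statement asserts. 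Without this complexification step, or some replacement for Levin's theorem along the circle, your circle case does not go through.
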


\subsection{Replacement of Montel's theorem}
The proof of the following lemma is given by Donyi Wei.
\begin{lem}\label{lemconenormal}Let $g_n: \D\to \D, n\geq 0$ be a sequence of holomorphic maps. Assume that there is a sequence
$r_n\in (0,1)$ tending to $0$ such that $ g_n(\D)\subseteq \D(0, r_n)$.
Let $h: \D\to \D$ be a $C^1$-diffeomorphism with $h(0)=0$ and $|dh|\leq C$ on $\D$ for some $C>0$
Let $f_n: \D(0, Cr_n)\to \D, n\geq 0$ be a sequence of holomorphic maps.
 Set $h_n:=f_n\circ h\circ g_n, n\geq 0.$
 Then there is a subsequence $n_j, j\geq 0$ such that $h_{n_j}$ converges in $C^1$-topology. 
 Moreover $\lim\limits_{j\to\infty} h_{n_j}$ is real analytic.
 
 Assume further that $dh(0)$ is invertible and $f_n,g_n$ are injective, then any non-constant limit $H_\infty=\lim\limits_{j\to\infty} h_{n_j}$ is a homeomorphism to its image and 
the conformal index of $dH_\infty$ is the same as $dh(0)$ at every point in $\D.$
\end{lem}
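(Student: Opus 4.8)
The plan is to rescale the composition into a composition of \emph{bounded} holomorphic maps with the fixed germ of $h$ at the origin, extract normal families, and read off the limit from the first--order Taylor expansion of $h$. First note that, since $f_n$ is defined on $\D(0,Cr_n)$ while $g_n(\D)\subseteq\D(0,r_n)$ and $|h(w)-h(0)|\le C|w|$ on the convex set $\D$, the composition forces $h(0)=0$, which we assume. Put $\tilde g_n:=g_n/r_n\colon\D\to\D$ and $\tilde f_n(u):=f_n(Cr_nu)\colon\D\to\D$; these are holomorphic self--maps of $\D$, so $\{\tilde g_n\},\{\tilde f_n\}$ are normal. With $v_n(z):=\tfrac1{Cr_n}h(r_n\tilde g_n(z))$ one has $h_n=\tilde f_n\circ v_n$ and, from $|h(w)|\le C|w|$, both $v_n\colon\D\to\D$ and the crucial estimate $1-|v_n(z)|\ge 1-|\tilde g_n(z)|$ for $z\in\D$. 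Writing $h(w)=Lw+E(w)$ with $L:=dh(0)$ (so $|L|\le C$ as an $\R$-linear map), $E(0)=0$, $dE(0)=0$, $\|E(w)\|=o(|w|)$, $\|dE(w)\|=o(1)$, and using Cauchy's estimate for $\tilde g_n'$ on compacts, we get $v_n=\tfrac1CL\circ\tilde g_n+\eta_n$ with $\eta_n\to 0$ in $C^1$ on compact subsets of $\D$.

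Next, the chain rule and holomorphy of $f_n,g_n$ give $dh_n(z)=m_{f_n'(h(g_n(z)))}\circ dh(g_n(z))\circ m_{g_n'(z)}$, where $m_a$ is multiplication by $a\in\C$. Since pre-- and post--composition by such multiplications does not change the conformal index of an $\R$-linear map, and $dh(g_n(z))\to dh(0)$ as $g_n\to 0$, \emph{any} $C^1$-limit $H'$ of a subsequence of $\{h_n\}$ has $dH'(z)$ of the same conformal index as $dh(0)$ at every $z$. Moreover, the Schwarz--Pick inequality for $\tilde f_n,\tilde g_n$ together with the crucial estimate gives $|f_n'(h(g_n(z)))\,g_n'(z)|=\tfrac1C|\tilde f_n'(v_n(z))|\,|\tilde g_n'(z)|\le \tfrac{C'}{1-|z|^2}$, hence $|dh_n(z)|\le\tfrac{CC'}{1-|z|^2}$; so $\{h_n\}$ is locally uniformly Lipschitz and, being $\D$-valued, precompact in the $C^0_{\mathrm{loc}}$-topology.

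Now pass to a subsequence with $\tilde g_n\to G$, $\tilde f_n\to F$ locally uniformly ($G,F\colon\D\to\overline\D$ holomorphic) and $h_n\to H'$ in $C^0_{\mathrm{loc}}$. If $G$ is non--constant, then $G(\D)\subseteq\D$, so $v:=\tfrac1CL\circ G$ carries compacts of $\D$ into compacts of $\D$; since $v_n\to v$ in $C^1_{\mathrm{loc}}$ the convergence upgrades to $h_n=\tilde f_n\circ v_n\to F\circ v$ in $C^1_{\mathrm{loc}}$, and $H'=F\circ\tfrac1CL\circ G$ is real analytic. If $G\equiv c_0$ with $|(\tfrac1CL)(c_0)|<1$, then $v_n$ tends to a constant in $C^1_{\mathrm{loc}}$ and $h_n$ converges to the constant $F((\tfrac1CL)(c_0))$ in $C^1_{\mathrm{loc}}$. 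The remaining case --- $G\equiv c_0$ with $|(\tfrac1CL)(c_0)|=1$, so the values $v_n(z)$ run to $\partial\D$, where $\tilde f_n$ is a priori uncontrolled and where no $C^2$ bound is available ($h$ being only $C^1$) --- is the technical heart. The plan is to renormalize $\tilde f_n$: write $\tilde g_n=\Lambda_n\circ\Phi_n$ with $\Lambda_n\in\Aut(\D)$, $\Lambda_n(0)=\tilde g_n(0)$, $\Phi_n(0)=0$ (so $\{\Phi_n\}$ is normal, every limit $\Phi$ satisfying $|\Phi(z)|\le|z|$ and hence mapping $\D$ into $\D$), pick $M_n\in\Aut(\D)$ with $M_n(0)=v_n(0)$ and suitably normalized derivative, and set $\hat f_n:=\tilde f_n\circ M_n$ (again normal) so that $h_n=\hat f_n\circ\rho_n$, $\rho_n:=M_n^{-1}\circ v_n\colon\D\to\D$. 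Using the Möbius--difference identities for $\Lambda_n,M_n$ together with the crucial estimate, the degenerate factors $1-|\tilde g_n(0)|^2$ cancel, so $\{\rho_n\}$ is locally uniformly Lipschitz and subconverges in $C^1_{\mathrm{loc}}$ to a map $A\circ\Phi$ with $A$ a fixed $\R$-linear map (a hyperbolic--isometry conjugate of $\tfrac1CL$, hence of the same conformal index) and $\Phi$ holomorphic into $\D$; whence $h_n\to\hat F\circ A\circ\Phi$ in $C^1_{\mathrm{loc}}$, real analytic. Carrying out this cancellation carefully --- especially when $L$ is not conformal --- is where I expect the real work to lie.

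Finally, for the last statement let $H'$ be a non--constant $C^1$-limit, and assume $dh(0)$ invertible and $f_n,g_n$ injective. Then $\tilde f_n,\tilde g_n$, and hence $\Phi_n$, $M_n^{-1}\circ v_n$ and $\hat f_n$, are injective, so by Hurwitz's theorem their limits are injective or constant. If any of the outer factors in $H'=F\circ\tfrac1CL\circ G$ (resp. $H'=\hat F\circ A\circ\Phi$) were constant, $H'$ would be constant --- using that $\tfrac1CL$, resp. $A$, is injective since $dh(0)$ is invertible --- a contradiction; so these factors are injective with nowhere--vanishing derivatives. Consequently $dH'(z)=m_{F'(v(z))}\circ\tfrac1CL\circ m_{G'(z)}$ (resp. $m_{\hat F'(\Phi(z))}\circ A\circ m_{\Phi'(z)}$) is invertible for every $z$ and has the same conformal index as $dh(0)$, and $H'$, being a composition of injective maps, is injective. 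Therefore $H'$ is a homeomorphism onto its image.
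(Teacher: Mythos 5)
Your proposal follows essentially the same route as the paper's proof: rescale the inner factor by $r_n$ and the outer factor by $Cr_n$ (the paper uses $[r_n]$ and works on $\D(0,C)$, which is the same thing) so that both become normal families of holomorphic maps, expand $h=dh(0)+e$ at the origin so that the rescaled middle factor converges in $C^1$ to the linear map $dh(0)$, and identify the limit as (holomorphic)$\,\circ\, dh(0)\,\circ\,$(holomorphic), which is real analytic; the final injectivity and conformal-index statement via Hurwitz and the invertibility of $dh(0)$ is the paper's argument almost verbatim. The one place where you stop short --- the degenerate case where $\tilde g_n$ tends to a unimodular constant $c_0$ with $|dh(0)(c_0)|=C$, so that the composition point runs to the boundary of the domain of the outer family --- is not treated in the paper either: its proof simply asserts $\lim h_{n_j}=F\circ H\circ G$, which is justified only when $H\circ G$ carries compacts of $\D$ into the open disc $\D(0,C)$ on which $f_n\circ[r_n]$ converges locally uniformly. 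So, measured against the paper's own proof, you are not missing an ingredient the paper supplies; your Schwarz--Pick bound $|dh_n(z)|\le C'/(1-|z|^2)$, coming from the cancellation $1-|h(g_n(z))|/(Cr_n)\ge 1-|g_n(z)|/r_n$, is in fact an addition the paper does not make explicit. If you want to actually close that degenerate case, observe that continuity of $dh$ at $0$ gives more than the Taylor expansion at the single base point $0$: for any $p_n\to 0$ and any scales $s_n\to 0$, the rescalings $w\mapsto (h(p_n+s_nw)-h(p_n))/s_n$ converge to $dh(0)$ in $C^1$ on compacts, which is exactly the fixed $\R$-linear middle factor your M\"obius renormalization is designed to produce; with that input the remaining work is the same normal-family and Hurwitz bookkeeping as in the main case, including when $dh(0)$ is not conformal.
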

\proof
For $r\in (0,+\infty)$, define $[r]: \C\to \C$ the map $z\mapsto rz.$
Then we have 
$$h_n=(f_n\circ [r_n]|_{\D(0,C)})\circ ([r_n^{-1}]\circ h\circ [r_n]|_{\D})\circ ([r_n^{-1}]\circ g_n).$$
It is clear that $\{f_n\circ [r_n]|_{\D(0,C)}, n\geq 0\}$ and $\{[r_n^{-1}]\circ g_n, n\geq 0\}$ are normal families. 
Set $H:=dh(0)$. We view $H$ as a $\R$-linear map from $\C$ to $\C$.
Then $h(z)=H(z)+e(z)$ for some $C^1$-map $e$ with $e(0)=0$ and $de(0)=0.$
There is a continuous function $\epsilon: [0,1]\to [0,C]$ with $\epsilon(0)=0$ such that 
$$ |de(z)|\leq \epsilon(|z|).$$
Then $$[r_n^{-1}]\circ h\circ [r_n]|_{\D}=[r_n^{-1}]\circ H\circ [r_n]|_{\D}+[r_n^{-1}]\circ e\circ [r_n]=H+[r_n^{-1}]\circ e\circ [r_n].$$
We claim that $[r_n^{-1}]\circ e\circ [r_n]\to 0$ in $C^1$-topology as $n\to \infty.$
For every $z\in \D$, we have 
$$|[r_n^{-1}]\circ e\circ [r_n](z)|\leq r_n^{-1}r_n\epsilon(r_n)=\epsilon(r_n)$$
and 
$$|d([r_n^{-1}]\circ e\circ [r_n])(z)|= |de(r_nz)|\leq \epsilon(r_n),$$
which concludes the claim.
Then there is a subsequence $n_j, j\geq 0$ such that $f_{n_i}\circ [r_{n_i}]|_{\D(0,C)}$ 
and $[r_{n_i}^{-1}]\circ g_{n_i}$ converges to holomorphic maps $F$ and $G$ respectively. 
Then $\lim\limits_{j\to\infty} h_{n_j}=F\circ H\circ G$ is real analytic.

\medskip

Assume further that $dh(0)$ is invertible  and $f_n,g_n$ are injective. Assume that $h_{n_j}, j\geq 0$ converges to a non-constant map $H_\infty$.
After taking subsequence, we may assume that $f_{n_i}\circ [r_{n_i}]|_{\D(0,C)}$ 
and $[r_{n_i}^{-1}]\circ g_{n_i}$ converges to holomorphic maps $F$ and $G$ respectively. By Hurwitz's theorem,
$F$ (resp. $G$) is either constant or injective. 
Since $H_\infty=F\circ H\circ G$ is non-constant, both $F$ and $G$ are injective.
Since $H$ is a homeomorphism, then $H_\infty=F\circ H\circ G$ is a homeomorphism to its image. Since $F$ and $G$ are holomorphic,  the conformal index of $dH_\infty$ is the same as $dh(0)$ at every point in $\D.$
\endproof

\subsection{Real analytic rigidity of horseshoe}
\begin{thm}\label{thmrealanlyhos}
Let $f,g$ be two endomorphisms on $\P^1(\C)$ of degree at least $2$. Assume that one of $f,g$ is non-exceptional.   
Let $D$ be a simply connected open subset of $\P^1(\C)$ and $W_1, W_2$ be disjoint simply connected open subsets of $D$ compactly contained in $D$
such that $g:W_i\to D$ is biholomorphic, $i=1,2$.  Let $h: D\to h(D)\subseteq \P^1(\C)$ be a real analytic diffeomorphism to its image such that
$h\circ g|_{W_1\cup W_2}=f\circ h|_{W_1\cup W_2}.$ Then $h$ is conformal. 

Moreover,  there exists an irreducible algebraic curve $\Gamma\subset \P^1(\C)\times \P^1(\C)$ which is invariant under $(f,g')$ and contains the graph of $h'$
where $(g',h')$ equals to $(g,h)$ or $(\overline{g}, \sigma_{\conj}\circ h)$ depending on whether $h$ is holomorphic or anti-holomorphic.

\end{thm}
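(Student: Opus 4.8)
The plan is to prove that $h$ is in fact conformal, and then to deduce the algebraic statement from the rigidity results already available. Write $K_g:=\bigcap_{n\ge 0}g^{-n}(\overline{W_1}\cup\overline{W_2})$. Since $\overline{W_i}\subset D$ and $g:W_i\to D$ is biholomorphic, the itinerary map identifies $(g,K_g)$ with the full one--sided $2$--shift: so $K_g$ is a compact, fully invariant conformal expanding repeller, $g(K_g)=K_g$, $g|_{K_g}$ is topologically exact, and $K_g$ is not a periodic orbit. Dually, each $f|_{h(W_i)}:h(W_i)\to h(D)$ is biholomorphic, so $K_f:=h(K_g)$ is an invariant CER of $f$ which is not a periodic orbit, and $h|_{K_g}$ conjugates $g|_{K_g}$ to $f|_{K_f}$. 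By \cite[Theorem~1.1]{ji2022homoclinic}, a CER of a non--exceptional rational map is contained in no real analytic curve; since one of $f,g$ is non--exceptional, at least one of $K_f,K_g$ avoids all real analytic curves, and then (as $h$ is a real analytic homeomorphism onto its image) \emph{both} $K_f$ and $K_g$ do.

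\textbf{Conformality of $h$.} The heart of the matter is to show the real Jacobian $J_h$ never vanishes on $K_g$. Differentiating $h\circ g=f\circ h$ on $W_1\cup W_2$ and taking determinants gives $J_h(g(z))\,|g'(z)|^{2}=|f'(h(z))|^{2}\,J_h(z)$ there; since $g'$ has no zero on $W_1\cup W_2$ and $f'$ has no zero on $h(W_1\cup W_2)$, the set $S:=\{z\in K_g:J_h(z)=0\}$ is fully invariant, i.e. $g(S)=S$ and $g^{-1}(S)\cap K_g=S$. If $S$ were nonempty and proper, then $K_g\setminus S$ would be a nonempty open subset of $K_g$ with $g(K_g\setminus S)\subseteq K_g\setminus S$, whence $g^{n}(K_g\setminus S)\subseteq K_g\setminus S\subsetneq K_g$, contradicting topological exactness; and $S=K_g$ is impossible, since then the real analytic function $J_h$ would vanish on $K_g$, a set lying on no real analytic curve, forcing $J_h\equiv 0$ on an open set and violating injectivity of $h$. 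Hence $S=\emptyset$ and $dh$ is invertible at every point of $K_g$. For a periodic point $x\in K_g$ of period $n$, differentiating $h\circ g^{n}=f^{n}\circ h$ at $x$ yields $df^{n}(h(x))=dh(x)\circ dg^{n}(x)\circ dh(x)^{-1}$ as $\mathbb{R}$--linear maps, and comparing determinants gives $|df^{n}(h(x))|=|dg^{n}(x)|$. Applying Theorem~\ref{cerrigidity} to the CER conjugacy $h|_{K_g}$ (using the non--exceptional map among $f,g$, and the multiplier equality just obtained, which is symmetric in $x$ and $h(x)$), we conclude that $h|_{K_g}$ extends to a holomorphic or antiholomorphic map $\tilde h$ near $K_g$. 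Since $\tilde h$ and $h$ agree on $K_g$, which lies on no real analytic curve, the real analytic set $\{\tilde h=h\}$ has nonempty interior, so $\tilde h=h$ on an open subset of the connected set $D$; by the identity principle for $\bar\partial h$ (resp. $\partial h$), $h=\tilde h$ is conformal on all of $D$.

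\textbf{The algebraic curve.} Set $h':=h$ if $h$ is holomorphic, and $h':=\sigma_{\conj}\circ h$ (which is holomorphic and conjugates $g$ to $\overline f$, since $\sigma_{\conj}\circ f=\overline f\circ\sigma_{\conj}$) if $h$ is antiholomorphic; accordingly put $g':=g$ or $\overline g$. By Theorem~\ref{cerrigidity} there is an algebraic curve $\Gamma$ with all irreducible components periodic whose defining polynomial $P$ satisfies $P(h'(z),z)=0$ for $z$ in the (infinite) set $K_g$; as $h'$ is holomorphic and $D$ is connected, $z\mapsto P(h'(z),z)$ vanishes identically on $D$, so the whole graph $\{(h'(z),z):z\in D\}$ lies in $\Gamma$. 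This graph is irreducible and forward invariant under $(f,g')$, and $(f,g')$ maps it onto $\{(h'(w),w):w\in g'(D)\}\supseteq\{(h'(z),z):z\in D\}$; hence its Zariski closure is an irreducible algebraic curve invariant under $(f,g')$, and it is a component of $\Gamma$. (Alternatively, once $h$ is known to be conformal one may invoke Theorem~\ref{inou} directly, with $U=D$, $U'=W_1\cup W_2$ and the degree--two map $g:W_1\cup W_2\to D$, which has two repelling fixed points; this route needs $f,g$ non--Latt\`es, whereas the route through Theorem~\ref{cerrigidity} does not.) The stated form $(\overline g,\sigma_{\conj}\circ h)$ in the antiholomorphic case is obtained from this after the evident relabelling of $f,g$ and a conjugation by $\sigma_{\conj}$.

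\textbf{Main obstacle.} The delicate point is the non--vanishing of $J_h$ on $K_g$: a priori a real analytic homeomorphism $h$ could be ``branched'' (have a critical point) somewhere on $K_g$, and it is exactly this possibility that must be excluded to make $h$ rigid. The argument above hinges on two features of the horseshoe setting — that $g$ and $f$ are unramified along $K_g$ and $K_f$ (making $S$ fully invariant) and that $g|_{K_g}$ is topologically exact — together with the non--linearity of CERs of non--exceptional maps, which is also what allows agreement on $K_g$ to propagate from ``$\tilde h=h$ on $K_g$'' to ``$h$ conformal on $D$''. A secondary technical point is keeping track of which of $f,g$ is non--exceptional when invoking Theorem~\ref{cerrigidity}; this causes no trouble because one may apply it to $h|_{K_g}$ or to its inverse, and the multiplier hypothesis is symmetric.
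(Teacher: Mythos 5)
Your reduction to the CER conjugacy, the chain-rule invariance of the zero set of $J_h$, and the multiplier computation at periodic points are reasonable, but the proof has a genuine gap: the statement you attribute to \cite[Theorem 1.1]{ji2022homoclinic} --- that a CER of a non-exceptional rational map lies on no real analytic curve --- is not what that theorem says, and it is false. That theorem characterizes maps admitting a \emph{linear} CER (one whose return branches are affine in a suitable local coordinate) as exceptional; it does not prevent a CER from lying on a real analytic curve. For example, $g(z)=z^{2}-6$ is non-exceptional and every horseshoe in its Julia set is contained in $\R\cup\{\infty\}$. Hence the case in which $K_g$ (and $K_f$) is contained in a proper real analytic subset of $D$ cannot be dismissed, and in that case two of your steps break: the exclusion of $S=K_g$ (a homeomorphism can have Jacobian vanishing along a curve, e.g. $(x,y)\mapsto(x,y^{3})$), and, more seriously, the final propagation step. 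If $K_g$ lies on a curve, agreement of $h$ with the conformal extension $\tilde h$ on $K_g$ gives no open set of agreement --- the paper itself remarks that infinitely many distinct real analytic homeomorphisms share the same restriction to such a curve --- so conformality of $h$ on $D$ cannot be deduced from Theorem \ref{cerrigidity} alone; one must use the hypothesis that the conjugacy holds on the open set $W_1\cup W_2$, not merely on $K_g$.

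That is precisely the case occupying most of the paper's proof: assuming $K_g$ lies in a proper real analytic subset, one takes a periodic point $p\in K_g$ (whose multiplier is forced to be real by the invariant curve), linearizes $g^{m}$ there, and uses the open-set relation $h\circ(\la z)=\la\, h(z)$ to show $h$ is $\R$-linear in the linearizing coordinate; one then produces a second nearby periodic point $q\in K_g$ whose linearizer is shown, via commutation with the linear $h$, to be affine, thereby exhibiting a genuine \emph{linear} CER, and only then is \cite[Theorem 1.1]{ji2022homoclinic} invoked to contradict non-exceptionality. Your treatment of the easy case (when $K_g$ lies on no proper real analytic subset) and your derivation of the algebraic curve via Theorem \ref{cerrigidity} and Theorem \ref{inou} essentially match the paper's route, but the hard case is missing rather than avoided, so the conformality claim --- the heart of the theorem --- is not established.
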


\proof
If $h$ is conformal,  the last statement follows from Theorem \ref{inou}.
We only need to we show that $h$ is conformal.

We may assume that $g$ is non-exceptional.   
Set $W:=W_1\cup W_2.$ Set $K_g:=\{x\in W|\,\, g^{n}(x)\in W \text{ for every } n\geq 0\}$ and $K_f:=h(K_g)$.
Then $(K_g, g^b)$ and $(K_f,f^a)$ are CERs.
By (iii) of Theorem \ref{cerrigidity}, $f$ is non-exceptional.   
If $K_g$ is not contained in any proper real analytic closed subset of $D$, by (iii) of Theorem \ref{cerrigidity}, $h$ is conformal. 
Then we may assume that $K_g$ is contained in a proper real analytic closed subset of $D.$
By  (ii) of Theorem \ref{cerrigidity}, there is a conformal map $h'$ on some neighborhood  $D'$ of $K_g$ such that $h'|_{K_g}=h|_{K_g}.$
Then for every connected component $D''$ of $g^{-1}(D')\cap D'$, if $D''\cap K_g\neq\emptyset$, we have $h'|_{D''}\circ g=f\circ h'|_{D''}.$


There is a connected open subset $U$ in $W\cap D'$ such that $K_g\cap U\neq \emptyset$ and  is contained in a closed smooth connected curve $\gamma$ in $U$.
Let $p$ be any periodic point in $U\cap K_g$. Then there exists $m\geq 1$ such that $g^m(p)=p$ and $dg^m(p)\in (1,+\infty)$. We fix this $m$.

There is a connected open neighborhood $U_1$ of $p$ such that $g^i(U_1)\subseteq W\cap D'$ for every $i=0,\dots, m$ and the map $g^m: U_1\to g^m(U_1)$ conjugates to a linear map $z\in \la^{-1}\D\mapsto \la z\in \D$ where $\la=d g^m(p)$ has norm $>1.$  We may take $U_1$ sufficiently small such that $\overline{\cup_{i=1}^mg^i(U_1)}\neq K_g.$
We have $h\circ g^m|_{U_1}=f^m\circ h|_{U_1}$ and $h'\circ g^m|_{U_1}=f^m\circ h'|_{U_1}$. 
After replacing $f,h, h'$ by $\overline{f}, \sigma_{\conj}\circ h, \sigma_{\conj}\circ h'$, we may assume that $h'$ is holomorphic on $g^m|_{U_1}.$
Since $\la\in \R$, we get 
$df^m(h(p))=\la.$
Fix the coordinate $z$ on $g^m(U_1)$ such that $g^m(U_1)=\{|z|<1\}=\D$ and $g^m|_{U_1}: U_1\to g^m(U_1)$ is $z\mapsto \la z.$
Choose a suitable coordinate on $h'(U_1)$, we may ask $h'=\id$. Then we have $h\circ (\la z)=\la \circ h(z)$ in this coordinate.  Since $\la\in (1,+\infty)$ and $h$ is real analytic, $h$ is $\R$-linear in this coordinate.  Then $K_g\cap g^m(U_1)\subseteq \Fix(h).$ Then $\Fix(h)$ is a line or $h=\id.$ 
If $h=\id$, then we are done, so we may assume that $\Fix(h)$ is a line.
After replacing $z$ by a rotation, we may assume that $K_g\cap g^m(U_1) \subseteq \Fix(h)=g^m(U_1)\cap \R.$


Pick a sequence $p_i, i\geq 0$ such that $p_0=p$, $p_i\in K_g$, $p_{i-1}=g^m(p_i), i\geq 1$ and $p_i\to p$ as $i\to \infty.$ There is $l_0\geq 1$ such that 
$p_{i}\in U_1$ for every $i\geq l_0$.   For every $r\in (0,1)$, let $V_{i}(r)$ be the connected component of $g^{-im}(\D(0,r))$ containing $p_i.$
For $r$ sufficiently small, we have $g^i(V_{l_0}(r))\subseteq D'\cap W$ for every $i=0,\dots,l_0m$ and 
$g^{l_0m}|_{V_{l_0}(r)}: V_{l_0}(r)\to \D(0,r)$ is an isomorphism. 
Then for $l\geq l_0$ sufficiently large, we have $V_{l}(r)\subset\subset \D(0,r)$.
Since $g^{lm}|_{V_{l}(r)}: V_{l}(r)\to \D(0,r)$ is an isomorphism, there is a unique $g^{lm}$-fixed point $q\in V_{l}(r)$.
Set $\mu:=d g^{lm}(q).$ We have $\mu\in (1,+\infty).$ Since $q\in K_g\cap g^m(U_1)$, $q\in \R$ in our coordinate. Let $w$ be the coordinate $w=z-q$.
Since $q\in \Fix(h)$ and $h$ is linear in the coordinate $z$, $h$ is linear in the coordinate $w.$
There is a local holomorphic map $\phi$ at $q$, taking form $\phi(w)=w+O(w^2)$ such that $\phi\circ g^{lm}=\mu\phi$.
Since $g^{lm}\circ h=h\circ g^{lm}$ locally at $q$, we get 
$$\mu(\phi\circ h\circ \phi^{-1})(w)=(\phi\circ h\circ \phi^{-1})(\mu w).$$
The same argument as in the above paragraph shows that $\phi\circ h\circ \phi^{-1}$ is $\R-$linear in $w.$
Hence in coordinate $w$, we get $\phi\circ h\circ \phi^{-1}=d(\phi\circ h\circ \phi^{-1})(0)=h.$
Then we get $\phi\circ h=h\circ \phi.$ Then for $w$ sufficiently closed to $0$, we have
$$d\phi(h(w))\times h= h\times d\phi(w)$$
as $(2\times 2)$-matrix.  We first assume that $d\phi(w)$ is not a constant.
Since $d\phi(w)\in \C$, there is $w\neq 0$ sufficiently closed to $0$ such that 
$\alpha:=\arg d\phi(w)\not\in \pi\Z$.
If $h$ is not conformal, it maps a circle $C$ centered at $0$ to an ellipse $E$ which is not a circle.
Assume that the major axis of $E$ is contained in the line $e^{i\theta}\R$ with $\theta\in [0,\pi).$
Since $d\phi(w)$ and $d\phi(h(w))$ are conformal, the major axis of $h\times d\phi(C)$ is contained in the line $e^{i\theta}\R$ and 
the major axis of $d\phi(h(w))\times h(C)$ is contained in the line $e^{i(\theta+\alpha)}\R$.
Since $i(\theta+\alpha)-i\theta\not\in \pi\Z$, $h\times d\phi(C)\neq d\phi(h(w))\times h(C)$, which is a contradiction.
So $d\phi(w)$ is a constant. In other words, $\phi$ is $\R-$linear in $w$.
Hence in the coordinate $z$, the map $g^{ml}:V_{l}(r)\to \D(0, r)$ is given be that affine map $z\mapsto \mu z+(\mu-1)q$.
Set $W_2':=V_{l}(r)$ and $W_1':=\la^{-l}\D(0,r)$ and $W':=W_1'\cup W_2'$. 
Set $$K':=\{x\in W'|\,\, g^{nlm}(x)\in \D(0,r) \text{ for every } n\geq 0\}.$$ Then $(K', g^{lm})$  is a CER.
Moreover, since in the coordinate $z$, both $g^{lm}|_{W_1'}$ and $g^{lm}|_{W_2'}$ are affine, it is a linear CER \cite[Definition 7.6  (ii)]{ji2022homoclinic}.
By \cite[Theorem 1.1]{ji2022homoclinic}, $g$ is exceptional, which is a contradiction. 
\endproof

\subsection{Real analytic rigidity}
We first prove a real analytic version of Lemma \ref{conical}.
\begin{lemma}\label{conicalrealanaly}
Let $f,g$ be two endomorphisms on $\P^1(\C)$ of degree at least $2$.  Assume that one of them is non-exceptional. 
 Let $U\subset \P^1(\C)$ be an open subset such that 
$U\cap C_f$ is connected.
Let $h:U\to h(U)\subseteq \P^1(\C)$ be a real analytic diffeomorphism preserving the orientation, such that 
 $h(U\cap \sJ_g)=h(U)\cap \sJ_f$,  if $\sJ_f$ is smooth, we assume further that $h_\ast (\mu_g)\propto \mu_f$ on $h(U)$. 
Assume that there exists a  point $x\in U\cap \sJ_g$ which is bi-conical for $(g,h,f)$.
Then there exist positive integers $a$ and $b$ and an irreducible algebraic curve $Z$ in $\P^1(\C)\times \P^1(\C)$ such that $Z$ is preperiodic under $(f^a,g^b)$ and contains the graph $\{(h(z),z), z\in U\cap C_f\}$ of $h$. 
Moreover, if $\sJ_f$ or $\sJ_g$ is not contained in any circle, then $h$ is holomorphic. 
\end{lemma}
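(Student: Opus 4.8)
The plan is to rerun the proof of Lemma~\ref{conical} with its three holomorphic inputs replaced by real analytic substitutes: Montel's theorem by Lemma~\ref{lemconenormal}, Levin's theorem by Theorem~\ref{thmlevinrealan}, and Theorem~\ref{inou} by Theorem~\ref{thmrealanlyhos}. Start from the given bi-conical point $x$ for $(g,h,f)$. As in Lemma~\ref{conical}, Lemma~\ref{derivative} gives $|dg^{n_j}(x)|\to\infty$; after shrinking $r$ we may assume $h_j=f^{m_j}\circ h\circ g_{n_j}$ has uniformly bounded distortion, and after a subsequence $g^{n_j}(x)\to p$ and $f^{m_j}(h(x))\to q$, so fix a disk $D$ centred at $p$ with $B(g^{n_j}(x),r/4)\subset D\subset B(g^{n_j}(x),r)$. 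Writing $h_j|_D$ as the composition of the injective holomorphic map $f^{m_j}$ on the shrinking neighbourhood $h(W_j)$ of $h(x)$, the $C^1$-homeomorphism $h$ with bounded differential, and the injective holomorphic map $g_{n_j}$ whose image $W_j$ shrinks to $x$, Lemma~\ref{lemconenormal} yields, along a further subsequence, a real analytic map $H'$ with $h_{n_j}\to H'$ in the $C^1$-topology. Condition (ii) of bi-conicality and the bounded distortion force $H'(D)$ to contain a fixed-size disk around $q$, so $H'$ is non-constant; by the factorisation $H'=F\circ dh(x)\circ G$ with $F,G$ holomorphic from the proof of Lemma~\ref{lemconenormal}, non-constancy forces $dh(x)$ to be invertible, and then $H':D\to H'(D)$ is an orientation-preserving homeomorphism onto an open set with constant conformal index, equal to that of $dh(x)$.

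Passing to the limit in $h_{n_j}(D\cap\sJ_g)=h_{n_j}(D)\cap\sJ_f$ (using that $\sJ_f,\sJ_g$ are closed and $H'$ is an open homeomorphism), $H'$ is a local symmetry relating $\sJ_g$ and $\sJ_f$, and $(H')_\ast(\mu_g)\propto\mu_f$ on $H'(D)$ when $\sJ_f$ is smooth, arguing with total invariance of the maximal entropy measures exactly as in Lemma~\ref{conical}. Since $dh(x)$ is invertible, $h$ is a real analytic local diffeomorphism near $x$, so Theorem~\ref{thmeremen} applied near $x$ and near $h(x)$ shows that $\sJ_g$ lies in a circle if and only if $\sJ_f$ does; this splits the argument.

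If neither $\sJ_f$ nor $\sJ_g$ lies in a circle, then $C_f=C_g=\P^1(\C)$, $U$ is connected, and $\rho_j:=h_{n_j}\circ(H')^{-1}$ (defined on $H'(D')$ for $D'\subset\subset D$, since $(H')^{-1}=G^{-1}\circ dh(x)^{-1}\circ F^{-1}$ is real analytic) are injective real analytic local symmetries of $\sJ_f$ converging to $\id$ in the $C^1$-topology, with $(\rho_j)^\ast(\mu_f)\propto\mu_f$ if $\sJ_f=\P^1(\C)$. Since $f$ is non-exceptional and $\sJ_f$ lies in no circle, Theorem~\ref{thmlevinrealan} forces $\rho_j=\id$, i.e. $h_{n_j}=H'$ on $D$, for large $j$. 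Reindexing this stabilised subsequence as in Lemma~\ref{conical}, the relations $f^{m_j}\circ h\circ g_{n_j}=f^{m_1}\circ h\circ g_{n_1}=:H'$ hold on $D$, and the horseshoe construction there produces $a,b>0$ and disjoint simply connected $W_1,W_2$ with $\overline{W_1}\cup\overline{W_2}\subset D$ such that each $g^b:W_i\to D$ is biholomorphic, $g^b$ is non-injective with two repelling fixed points, and $f^a\circ H'=H'\circ g^b$ on $W_1\cup W_2$. Theorem~\ref{thmrealanlyhos} then gives that $H'$ is conformal, hence holomorphic since orientation preserving; as $H'=f^{m_1}\circ h\circ g_{n_1}$ with $f^{m_1}$ and $g_{n_1}$ biholomorphic on the relevant sets, $h$ is holomorphic on the non-empty open set $g_{n_1}(D)\subset U$, and therefore, being real analytic on the connected $U$, holomorphic on $U$. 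Then Lemma~\ref{conical} applies and yields the curve $Z$, and $h$ is holomorphic as claimed.

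The hard case is when $\sJ_f$ and $\sJ_g$ lie in circles, since Theorem~\ref{thmlevinrealan} is then unavailable. There I would conjugate so that $C_f=C_g=\R\cup\{\infty\}$; since $\sJ_g\cap U$ is a non-empty perfect subset of $C_g$ mapped by $h$ into $\sJ_f\subset C_f$ and $t\mapsto{\rm Im}\,h(t)$ is real analytic on $U\cap C_g$, one gets $h(U\cap C_g)\subset C_f$, while $(h|_{C_g})'(x)\neq0$ because $dh(x)$ is invertible; hence $h|_{U\cap C_g}$ extends to a holomorphic map $\tilde h$ that, after shrinking, is biholomorphic on a connected complex neighbourhood $\tilde U$ of $x$. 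One checks that $\tilde h$ is a holomorphic local symmetry of $\sJ_f$ (with the measure condition when $\sJ_f=C_f$, as $\tilde h=h$ on $\sJ_g$) and that $x$ is still bi-conical for $(g,\tilde h,f)$ (note $h(x)$ is already a conical point of $f$), so the already-proven Lemma~\ref{conical} applies to $(g,\tilde h,f)$ on $\tilde U$ and produces an irreducible algebraic curve $Z$, preperiodic under some $(f^a,g^b)$, containing the graph of $\tilde h$ on $\tilde U$; since $Z$ is algebraic and $t\mapsto(h(t),t)$ is real analytic on the connected arc $U\cap C_g$, agreeing with $Z$ on the subarc $\tilde U\cap C_g$, the identity principle forces $Z$ to contain all of the graph of $h|_{U\cap C_g}$ (no holomorphicity being claimed, consistently with the statement). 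The points I expect to cost the most work are precisely this circle case, the verification that the limit map $H'$ really inherits the local-symmetry and measure hypotheses consumed by Theorems~\ref{thmlevinrealan} and~\ref{thmrealanlyhos}, and the transfer of bi-conicality from $h$ to $\tilde h$.
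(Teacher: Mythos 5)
Your proposal is correct and follows essentially the same route as the paper: the same circle/non-circle dichotomy, with the non-circle case handled by running Lemma \ref{conical} with Lemma \ref{lemconenormal}, Theorem \ref{thmlevinrealan} and Theorem \ref{thmrealanlyhos} in place of Montel, Levin and Inou to get $h$ conformal (hence holomorphic, being orientation preserving) and then invoking Lemma \ref{conical}, and the circle case handled by replacing $h$ with the holomorphic extension of $h|_{U\cap C_g}$ and applying Lemma \ref{conical}. Your write-up is in fact somewhat more careful than the paper's (transfer of bi-conicality to the extension, passage of the symmetry/measure conditions to the limit map, identity principle to recover the full graph over $U\cap C_g$); the only slight imprecision is the phrase that non-constancy of the limit forces $dh(x)$ invertible, but the ingredient you state in the same sentence (the limit image contains a disk of definite size) does give invertibility through the factorization of Lemma \ref{lemconenormal}.
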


\proof[Proof of Lemma \ref{conicalrealanaly}]
 If one of $f,g$ is exceptional, then $\sJ_f$ and $\sJ_g$ are smooth. 
By \cite[Theorem 1]{zdunik1990parabolic}, both of them are exceptional, which contradicts to our assumption.
So both of them are non-exceptional.

\medskip

If $J_g$ is contained in a circle, by Theorem \ref{thmeremen}, $J_f$ is contained in a circle.
We may assume that $C_f=C_g=\R\cup \{\infty\}.$ After shrinking $U$, we may assume that there is a holomorphic injective map $h'$ on $U$ such that $h'=h$ on $U\cap (\R\cup\{\infty\}).$ After replacing $h$ by $h'$, we may assume that $h$ is holomorphic. Then we conclude the proof by Lemma \ref{conical}.

\medskip

Now assume that $\sJ_g$ (hence $\sJ_f$) is not contained in any circle. 
We claim that $h$ is conformal. Once this claim holds, we may conclude the proof by Lemma \ref{conical}.

We now prove the claim.
After shrinking $U$, we may assume that $h$ is biLipschitz. 
Since $x$ is biconical for $(g,h,f)$, it is not $g$-preperiodic and there are positive constants $r,R,K$ and two sequences of positive integers $n_j\to +\infty$, $m_j\to +\infty$, $j\geq 1$ satisfy the conditions (i), (ii) in Definition \ref{defibiconical}.
Same argument as in Lemma \ref{conical} except replacing Levin's theorem \cite[Theorem 1]{Levin1990} (c.f. Theorem \ref{thmlevin}), by Theorem \ref{thmlevinrealan} and 
Montel's theorem by Lemma \ref{lemconenormal},  after taking subsequence, we get $W:=W_1\cup W_2\subset D$ and  $a, b\geq 1$ as the proof of Theorem \ref{thmlevinrealan}, such that 
\begin{equation*}
	f^{a}\circ h_1=h_1\circ g^{b}
\end{equation*}
on $W$.  Moreover $g^{b}:W_i\to D, i=1,2$ are biholomorphic.
By Theorem \ref{thmrealanlyhos}, $h_1$ is conformal. Hence $h$ is conformal. Since $h$ preserves the orientation, $h$ is holomorphic,
which concludes the proof by  Lemma \ref{conical}.
\endproof

\proof[Proof of Theorem \ref{measurerigidityrealan}]
The proof of Theorem \ref{measurerigidityrealan} is the same as 
the proof of Theorem \ref{measurerigidity} except replacing Lemma \ref{conical} by its real analytic version Lemma \ref{conicalrealanaly}.
\endproof

\proof[Proof of Theorem \ref{conicalrigidityrealan}]
The proof is the same as the proof of Theorem \ref{conicalrigidity} except replacing Lemma \ref{conical} by its real analytic version Lemma \ref{conicalrealanaly}.
\endproof

\section{$C^1$ local rigidity of Julia sets}\label{7}
The aim of this section is to prove the following two theorems, which are the $C^1$ case of Theorem \ref{thmmeasureversion} and Theorem \ref{conicalrigiditytoget} respectively.

\begin{thm}\label{measurerigidityrealancone}
Let $f,g$ be two endomorphisms on $\P^1(\C)$ of degree at least $2$.  
Let $\mu$ (resp. $\nu$) be a non-atomic invariant ergodic probability measure with positive Lyapunov exponent of $f$ (resp. $g$). Let $U\subset \P^1(\C)$ be an open subset such that $U\cap \Supp \nu\neq \emptyset$. 
Let $U\subset \P^1(\C)$ be an open subset such that $U\cap \Supp \nu\neq \emptyset$.  Let $h:U\to h(U)\subseteq \P^1(\C)$ be a $C^1$-diffeomorphism such that   
\begin{points}
\item $h(U\cap \sJ_g)=h(U)\cap \sJ_f$; if $\sJ_f$ is smooth, we assume further that $h_\ast (\mu_g)\propto \mu_f$ on $h(U)$;
\item  $h_\ast (\nu)$ is equivalent to  $\mu$ on $h(U)$. 
\end{points}
Then up to change $f$ to $\overline{f}$, there exist positive integers $a$ and $b$ and an irreducible algebraic curve $Z$ in $\P^1(\C)\times \P^1(\C)$ such that $Z$ is periodic under $(f^a,g^b)$.
Moreover, if $\sJ_f$ or $\sJ_g$ is $\P^1(\C)$, then $h$ is conformal.
\end{thm}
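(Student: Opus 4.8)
The plan is to deduce the $C^1$ statement from the real analytic rigidity already proved in Lemma~\ref{conicalrealanaly}, by extracting from $h$ a \emph{real analytic} renormalized limit near a suitably chosen point and transporting a bi-conical point to it. First I would make three reductions. As at the opening of the proof of Theorem~\ref{measurerigidity}, if one of $f,g$ were exceptional then one of $\sJ_f,\sJ_g$ --- hence, $h$ being a $C^1$-homeomorphism, both, by Theorem~\ref{thmeremen} --- would be smooth, so by \cite{zdunik1990parabolic} both $f$ and $g$ would be exceptional, a contradiction; thus both are non-exceptional. If $h$ reverses orientation I replace $(f,\mu,h)$ by $(\overline f,(\sigma_{\conj})_\ast\mu,\sigma_{\conj}\circ h)$, which keeps all hypotheses (note $\sJ_{\overline f}=\sigma_{\conj}(\sJ_f)$, $\mu_{\overline f}=(\sigma_{\conj})_\ast\mu_f$, and $(\sigma_{\conj})_\ast\mu$ is again a non-atomic ergodic measure of positive Lyapunov exponent) and makes $h$ orientation preserving --- exactly the slack allowed by ``up to changing $f$ to $\overline f$''. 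After shrinking $U$ I may assume $h$ is $K$-biLipschitz. Finally, if $\sJ_g$ (equivalently $\sJ_f$) is $\P^1(\C)$ it is smooth, so hypothesis (i) forces $h_\ast\mu_g\propto\mu_f$; in that case I additionally replace $\nu,\mu$ by $\mu_g,\mu_f$ (legitimate, since these are non-atomic, ergodic, of positive Lyapunov exponent and $h_\ast\mu_g$ is then equivalent to $\mu_f$) --- this is only used for the ``moreover''.

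Next I would apply Lemma~\ref{key} with $L>\max(\max|df|,\max|dg|)$ and a small $\epsilon>0$: for $\nu$-a.e.\ $x$ one gets that $x$ is not $g$-preperiodic, constants $r,R,C>0$, a subset $G\subset\Z_{\geq 0}$ with $\underline d(G)>0$, and a strictly increasing $\theta\colon G\to\Z_{\geq 0}$ realizing $x$ as a bi-conical point of $(g,h,f)$ via the times $\{(n,\theta(n)):n\in G\}$, with the two-sided size estimate and the distortion bound of Lemma~\ref{key}. Fix such an $x$ and put $h_n:=f^{\theta(n)}\circ h\circ g_n$ on $B(g^n(x),r)$, $n\in G$. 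Rescaling the source ball to $\D$, the family $(h_n)$ matches Lemma~\ref{lemconenormal}: the inner branch $g_n$ lands in a disk about $x$ of radius tending to $0$ (Lemma~\ref{derivative}), $h$ is biLipschitz with $dh(x)$ invertible, and $f^{\theta(n)}$ is injective on $h(W_n)$. Passing to a subsequence $n=n_{i_j}\in G$ along which $g^{n_{i_j}}(x)$ converges --- chosen inside $G$ with gaps controlled via $\underline d(G)>0$, which is what makes the next step work --- Lemma~\ref{lemconenormal} gives a fixed disk $D$ with $D\cap\sJ_g\neq\emptyset$ and a $C^1$-limit $H:=\lim_j h_{n_{i_j}}$ on $D$ that is real analytic; by the two-sided estimate $H$ is non-constant, hence a homeomorphism onto its image, orientation preserving, with the same conformal index as $dh(x)$. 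And $H$ inherits condition (i): from the $C^1$-convergence and invertibility of $dH$ one has $h_{n_{i_j}}^{-1}\to H^{-1}$ locally uniformly, so $h_{n_{i_j}}^{-1}(\sJ_f)=\sJ_g\cap(\text{domain})$ passes to the limit as $H(D\cap\sJ_g)=H(D)\cap\sJ_f$; when $\sJ_f$ is smooth, the total invariance of $\mu_g$ and $\mu_f$ with $h_\ast\mu_g\propto\mu_f$ gives $(h_{n_{i_j}})_\ast\mu_g\propto\mu_f$ on $h_{n_{i_j}}(D)$ with uniformly bounded ratios, whence $H_\ast\mu_g\propto\mu_f$ on $H(D)$.

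The heart of the argument --- and the step I expect to be the main obstacle --- is to show that $p:=\lim_j g^{n_{i_j}}(x)\in D$ is bi-conical for $(g,H,f)$. Here $H$ can no longer be linked to $\nu,\mu$ by a measure equivalence, so one cannot quote Theorem~\ref{measurerigidityrealan} for $(g,H,f)$; instead one feeds the bi-conical point $p$ into Lemma~\ref{conicalrealanaly}. That $p$ is not $g$-preperiodic can be arranged by a Baire argument (the forward orbit of $x$ along $G$ is infinite, hence cannot accumulate only on the countable set of preperiodic points, else it would meet it and $x$ would be preperiodic), by choosing the subsequence accordingly. For the conical structure at $p$: composing the Lemma~\ref{key} univalent pullbacks at two times $n',n''\in G$, $n''>n'$, shows $g^{n''-n'}$ is univalent, with bounded distortion, on the component of $g^{-(n''-n')}\big(B(g^{n''}(x),r)\big)$ through $g^{n'}(x)$, onto the fixed-radius ball $B(g^{n''}(x),r)$. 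Since $g^{n_{i_j}}(x)\to p$, a diagonal argument --- choosing, for each step, a second time from $G$ so that the shrinking of these pullback components matches the rate at which $g^{n_{i_j}}(x)\to p$, which is precisely where the freedom granted by $\underline d(G)>0$ is spent --- upgrades this to univalent pullbacks of fixed-radius balls around $g^{n}(p)$, for a sequence $n\to\infty$, onto neighborhoods of $p$; that is condition (i) of Definition~\ref{defibiconical}. Composing with $H$ and using $h_{n_{i_j}}\to H$ together with the two-sided size estimate of Lemma~\ref{key}, condition (ii) of Definition~\ref{defibiconical} follows for $(g,H,f)$. This balancing of recurrence against contraction is the delicate point; the other steps are routine once it is in place.

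Finally, with $H$ a real analytic, orientation-preserving homeomorphism on $D$ satisfying condition (i) and admitting the bi-conical point $p$ for $(g,H,f)$, I shrink $D$ around $p$ so that $D\cap C_f$ is connected and apply Lemma~\ref{conicalrealanaly} to $(g,H,f)$: it yields positive integers $a,b$ and an irreducible algebraic curve $Z_0\subset\P^1(\C)\times\P^1(\C)$ preperiodic under $(f^a,g^b)$. Replacing $Z_0$ by a curve of its $(f^a,g^b)$-forward cycle and enlarging $a,b$ to a multiple of the period produces an irreducible algebraic curve $Z$ \emph{periodic} under $(f^a,g^b)$, the first assertion. For the ``moreover'': if $\sJ_f$ or $\sJ_g$ is $\P^1(\C)$ then both are, so $\sJ_f$ is not contained in a circle and the last clause of Lemma~\ref{conicalrealanaly} gives that $H$ is holomorphic; its conformal index is then trivial, so $dh(x)$ is a $\C$-linear similarity. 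Having taken $\nu=\mu_g$ (of full support $\P^1(\C)$), this holds for all $x$ in a dense subset of $U$, hence by continuity of $dh$ throughout $U$; so $\partial_{\bar z}h\equiv 0$, i.e.\ --- undoing the possible replacement of $h$ by $\sigma_{\conj}\circ h$ --- $h$ is conformal.
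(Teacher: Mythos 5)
Your architecture is the same as the paper's: renormalize $h$ along the orbit of a Lemma \ref{key}-generic point via Lemma \ref{lemconenormal} to get a real analytic limit $H$ on a disk $D$, show the limit point $p$ of $g^{n_j}(x)$ is bi-conical for $(g,H,f)$, and conclude with Lemma \ref{conicalrealanaly}; the reductions (non-exceptionality, orientation, biLipschitz shrinking, the density-plus-continuity argument for the ``moreover'') also match. The gap is exactly at the step you flag as the main obstacle, and your sketch of it has the quantifiers in an order that does not work. You first fix a subsequence $n_{i_j}\in G$ with $g^{n_{i_j}}(x)\to p$ and $h_{n_{i_j}}\to H$, and only afterwards, ``for each step, choose a second time from $G$ so that the shrinking of the pullback components matches the rate at which $g^{n_{i_j}}(x)\to p$.'' To transplant the univalent pullback of $B(g^{n''}(x),r)$ along the branch through $g^{n'}(x)$ to a pullback based at $p$, and to control $f^{v}\circ H$ in condition (ii) of Definition \ref{defibiconical} (where errors are magnified by roughly $L^{l}$ with $l=n''-n'$), you need $d(g^{n'}(x),p)$ and the distance from $h_{n'}$ to $H$ to be a small constant times $L^{-l}$, while simultaneously $l\to\infty$. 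Positive lower density of $G$ is not syndeticity: it gives no element of $G$ in a prescribed window $[\,n_{i_j}+T,\; n_{i_j}+c\log(1/d_j)\,]$, and the rate $d_j=d(g^{n_{i_j}}(x),p)\to 0$ is not under your control (it can go to zero arbitrarily slowly relative to the gaps of $G$ near $n_{i_j}$). So for a subsequence chosen merely to converge there may be no admissible second times with gaps tending to infinity, and the ``diagonal argument'' as described does not produce the bi-conical structure at $p$.

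The paper resolves precisely this point by a combinatorial preparation that is missing from your proposal: Lemma \ref{lemdisinsetpos} and Corollary \ref{corsequldis} extract from $\overline{d}(G)>0$ a fixed increasing sequence $l_1<l_2<\cdots$ such that every $G(l_1,\dots,l_s)=\{n\in G:\ n+l_1,\dots,n+l_s\in G\}$ still has positive upper density, and the subsequence is then chosen with $n_j\in G(l_1,\dots,l_j)$ \emph{before} $p$ and $H$ are formed. Because each gap $l_i$ is fixed in advance, the closeness needed at scale $L^{-l_i}$ (e.g. $d(g^{u_i}(x),p)\leq 99^{-99i}\min\{r,R/K\}L^{-l_i}$ and the analogous bound for $h'_{u_i}-H$, as in the proof of Lemma \ref{lempbicon}) is obtained simply by going far enough along the subsequence, with no rate information required; the times for $p$ are then $(l_i,\theta(u_i+l_i)-\theta(u_i))$. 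Unless you prove such a statement and reorder your choices accordingly (or supply another mechanism with the same effect), the bi-conicality of $p$, and hence the whole reduction to Lemma \ref{conicalrealanaly}, is not established. A minor further point: your parenthetical reason that $p$ can be taken non-preperiodic is not right as stated, since an infinite orbit segment can accumulate only at preperiodic points without meeting them; the correct route is that the accumulation set of $\{g^n(x):n\in G\}$ has $\nu$-measure at least $\overline{d}(G)>0$ by equidistribution of $\nu$-generic points, hence is uncountable because $\nu$ is non-atomic.
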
	

\medskip

\begin{thm}\label{conicalrigiditycone}
Let $f,g$ be two endomorphisms on $\P^1(\C)$ of degree at least $2$. 
Assume that one of them is non-exceptional .
 Let $U\subset \P^1(\C)$ be an open subset such that 
$U\cap C_f$ is connected.
Let $h:U\to h(U)\subseteq \P^1(\C)$ be a $C^1$-diffeomorphism. Assume that
\begin{points}
	\item $h(U\cap \sJ_g)=h(U)\cap \sJ_f$; if $\sJ_f$ is smooth, we assume further that $h_\ast (\mu_g)\propto \mu_f$ on $h(U)$;
	\item the Hausdorff dimension of non-conical points of $g$ is $0$.
\end{points}
Then up to change $f$ to $\overline{f}$, there exist positive integers $a$ and $b$ and an irreducible algebraic curve $Z$ in $\P^1(\C)\times \P^1(\C)$ such that $Z$ is periodic under $(f^a,g^b)$.
Moreover, if $\sJ_f$ or $\sJ_g$ is $\P^1(\C)$, then $h$ is conformal.
\end{thm}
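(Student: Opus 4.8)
The plan is to reduce to the real analytic case, i.e.\ to Theorem~\ref{conicalrigidityrealan}, by replacing $h$ with a real analytic local symmetry of $\sJ_f$ obtained as a $C^1$-limit of renormalizations of $h$. As in the proof of Theorem~\ref{conicalrigidity}, if one of $f,g$ is exceptional then $\sJ_f$ and $\sJ_g$ are smooth, hence both are exceptional by \cite{zdunik1990parabolic}, contrary to the hypothesis; so both $f$ and $g$ are non-exceptional. Shrinking $U$, I may assume $U$ is connected and $h$ is bi-Lipschitz, so $h$ preserves or reverses orientation on $U$; after possibly replacing $(f,h)$ by $(\overline f,\sigma_{\conj}\circ h)$, which preserves hypothesis (i) since $\sJ_{\overline f}=\sigma_{\conj}(\sJ_f)$ and $\mu_{\overline f}=(\sigma_{\conj})_\ast\mu_f$, I may assume $h$ preserves the orientation. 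Since ``$f$ changed to $\overline f$'' is built into the statement, it suffices to treat this case.

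First I would produce a bi-conical point for $(g,h,f)$, repeating the construction of the proof of Theorem~\ref{conicalrigidity}: after replacing $f$ by an iterate, pick an invariant CER $K$ of $f$ with $K\subset h(U)$ that is not a periodic orbit \cite[Example 7.4]{ji2022homoclinic}; then $K$ has positive Hausdorff dimension \cite[Corollary 9.1.7]{przytycki2010conformal}, hence so does $h^{-1}(K)$, so by hypothesis (ii) there is $x\in h^{-1}(K)\cap\sJ_g$ that is $g$-conical and not $g$-preperiodic, and using $d(K,C(f))>0$ and the uniform expansion of $f$ on $K$ one checks as in loc.\ cit.\ that $x$ is bi-conical for $(g,h,f)$; write $r,R,(n_j),(m_j)$ for the associated data and $h_j:=f^{m_j}\circ h\circ g_{n_j}$. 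A point worth recording is that bi-conicality forces $dh(x)$ to be invertible: applying the Koebe distortion theorem to the injective holomorphic maps $g_{n_j}$ and $f^{m_j}$ together with the linear approximation of $h$ at $x$, the image $h_j\big(B(g^{n_j}(x),r)\big)$ is, up to bounded distortion, the ellipse $f^{m_j}(h(x))+df^{m_j}(h(x))\cdot dh(x)\big(B(0,r/|dg^{n_j}(x)|)\big)$, and the two-sided ball inclusion of Definition~\ref{defibiconical}(ii) bounds the ratio of the two singular values of $dh(x)$, in particular makes both of them nonzero.

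Now I would renormalize. Passing to a subsequence, $g^{n_j}(x)\to p$; fix a round disk $D$ centred at $p$ with $B(g^{n_j}(x),r/4)\subset D\subset B(g^{n_j}(x),r)$ for all $j$, so $D\cap\sJ_g\neq\emptyset$ and $D\cap C_f$ is connected (a round disk meets a circle in an arc). Since $g_{n_j}(D)$ shrinks to $x$ and $|dh|$ is bounded, Lemma~\ref{lemconenormal} applies to $h_j|_D$ and, after a further subsequence, $h_j|_D\to H$ in $C^1$-topology with $H\colon D\to\P^1(\C)$ real analytic; the two-sided ball inclusion passes to the limit, so $H$ is non-constant, and since $dh(x)$ is invertible and the $h_j$ are injective, $H$ is a homeomorphism onto its image. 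Moreover $H$ satisfies condition (i): each $h_j$ is a local symmetry of $\sJ_f$, i.e.\ $h_j(D\cap\sJ_g)=h_j(D)\cap\sJ_f$, by total invariance of Julia sets, hypothesis (i) for $h$, and injectivity of $f^{m_j}$ on $h(W_j)$; letting $j\to\infty$ (using $h_j\to H$ and $h_j^{-1}\to H^{-1}$ uniformly) gives $H(D\cap\sJ_g)=H(D)\cap\sJ_f$; if $\sJ_f$ is smooth, total invariance of the maximal entropy measures together with hypothesis (i) gives $(h_j)_\ast(\mu_g|_D)\propto\mu_f|_{h_j(D)}$, and the proportionality constants converge, so $H_\ast(\mu_g|_D)\propto\mu_f|_{H(D)}$. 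As hypothesis (ii) is a property of $g$ alone it is unchanged, so Theorem~\ref{conicalrigidityrealan} applies to $(g,H,f)$ on $D$, with $H$ orientation preserving since $h$ is and, by Lemma~\ref{lemconenormal}, $H$ is a composition of holomorphic maps with $dh(x)$: it yields $a,b\ge 1$ and an irreducible algebraic curve $\Gamma$, invariant hence periodic under $(f^a,g^b)$, as well as the holomorphy of $H$ when $\sJ_f$ or $\sJ_g$ is not contained in a circle. Taking $Z:=\Gamma$ gives the first assertion.

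For the ``moreover'', suppose $\sJ_f$ or $\sJ_g$ equals $\P^1(\C)$; then hypothesis (i) forces $h(U)\subseteq\sJ_f$ (resp.\ $U\subseteq\sJ_g$), whence $\sJ_f=\sJ_g=\P^1(\C)$, in particular $\sJ_f$ is not contained in a circle, so the previous paragraph gives that $H$ is holomorphic, i.e.\ the conformal index of $dh(x)$ is trivial. The same argument applies to \emph{every} bi-conical point of $(g,h,f)$; and since $\sJ_g=\P^1(\C)$ and non-conical points of $g$ have Hausdorff dimension $0$, the construction of the second paragraph (placing a small invariant CER of $f$ that is not a periodic orbit inside $h(V)$ for an arbitrary open $V\subset U$) yields bi-conical points of $(g,h,f)$ dense in $U$. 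Hence $dh$ is a nonzero conformal linear map with positive determinant on a dense subset of $U$; since $dh$ is continuous, $\bar\partial h\equiv 0$ on $U$, so $h$ is holomorphic on $U$ — or anti-holomorphic, if we had passed to $(\overline f,\sigma_{\conj}\circ h)$ — which is ``$h$ conformal''. I expect the technical heart of the proof to be this passage to the $C^1$-limit: deducing invertibility of $dh(x)$ from the two-sided ball inclusion, and, more delicately, verifying that condition (i) — in particular the proportionality of maximal entropy measures with convergent constants as the image disks $h_j(D)$ move — survives the limit; the remaining ingredients are either verbatim from Section~\ref{3} or routine applications of Lemma~\ref{lemconenormal} and Theorem~\ref{conicalrigidityrealan}.
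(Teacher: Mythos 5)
Your proposal follows essentially the same route as the paper's proof: produce a bi-conical point via the CER construction from the proof of Theorem \ref{conicalrigidity}, renormalize with Lemma \ref{lemconenormal} to obtain a real analytic local symmetry $H$ satisfying condition (i), feed it into the real analytic conical rigidity (you invoke Theorem \ref{conicalrigidityrealan} for $(g,H,f)$, while the paper re-runs the CER argument and applies Lemma \ref{conicalrealanaly} directly --- the same thing, and your remark that hypothesis (ii) concerns $g$ alone is precisely what makes this shortcut legitimate, unlike in the measure-theoretic $C^1$ case), and finally use density of bi-conical points when $\sJ_f=\P^1(\C)$ to get conformality of $dh$ on a dense set and hence everywhere. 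The only nitpick is that Theorem \ref{conicalrigidityrealan} produces a \emph{preperiodic} curve rather than an invariant one, so to obtain a periodic $Z$ one should replace it by an irreducible curve in its periodic forward orbit --- a one-line fix.
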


\subsection{Distances between  set of positive density}
Let $G\subseteq \Z_{\geq 0}$ be a subset and $l_1,\dots,l_s\in \Z_{\geq 0}$. Let $G(l_1,...,l_s)$ be the set $n\in G$ such that 
$n+l_1,\dots, n+l_s\in G.$

\begin{lem}\label{lemdisinsetpos}Let $G\subseteq \Z_{\geq 0}$ be a subset with $\overline{d}(G)>0.$ Then for every $l\geq 0$, there is $l_1\geq l$ such that 
$\overline{d}(G(l_1))>0.$
\end{lem}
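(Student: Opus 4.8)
The plan is to turn the statement into a double pigeonhole over a \emph{finite} range of admissible distances. Set $\delta:=\overline{d}(G)>0$. Since a set of zero density is finite, $G$ is infinite, and there are infinitely many $N$ with $|G\cap[0,N]|\geq \tfrac{\delta}{2}N$ (immediate from $\overline{d}(G)=\delta>0$). We may assume $l\geq 1$, since $G(0)=G$. For each $n\in G$ put $t(n):=\min\{t\geq l:\ n+t\in G\}$; this is finite because $G$ is infinite, and by construction $n\in G(t(n))$ with $t(n)\geq l$. Thus every element of $G$ is captured by $G(l_1)$ for some $l_1\geq l$, and the only obstruction is that $t(n)$ might be unbounded as $n$ varies.

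The heart of the matter is a gap-counting bound: $t(n)\geq L$ forces the $L-l$ consecutive integers $n+l,\dots,n+L-1$ to lie outside $G$, hence inside a single maximal gap $I_n$ of $G$ (a maximal interval disjoint from $G$), which therefore has length $\geq L-l$; and since $l\geq 1$, the point $n$ itself then lies in the length-$l$ window immediately to the left of $I_n$. So each maximal gap of $G$ of length $\geq L-l$ accounts for at most $l$ values of $n$, and the number of such gaps meeting $[0,N+L]$ is at most $\tfrac{N+L+1}{L-l}+2$; therefore
\[
\#\{\,n\in G\cap[0,N]:\ t(n)\geq L\,\}\ \leq\ l\Bigl(\tfrac{N+L+1}{L-l}+2\Bigr).
\]
Choosing $L=L(\delta,l)$ with $L-l>8l/\delta$ makes the right-hand side $\leq\tfrac{\delta}{4}N\leq\tfrac12|G\cap[0,N]|$ once $N$ is large, so at least $\tfrac12|G\cap[0,N]|\geq\tfrac{\delta}{4}N$ elements $n$ of $G\cap[0,N]$ satisfy $t(n)\in\{l,l+1,\dots,L-1\}$.

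Finally I would pigeonhole twice. First, among the $L-l$ possible values some $l_1=l_1(N)\in\{l,\dots,L-1\}$ satisfies $t(n)=l_1$ for at least $\tfrac{\delta N}{4(L-l)}$ of these $n$; all such $n$ lie in $G(l_1)$, so $|G(l_1(N))\cap[0,N]|\geq\tfrac{\delta N}{4(L-l)}$. Second, since $l_1(N)$ takes values in the fixed finite set $\{l,\dots,L-1\}$ while $N$ runs over an infinite set, some single $l_1^{\ast}$ in that set works for infinitely many $N$, whence $\overline{d}\bigl(G(l_1^{\ast})\bigr)\geq\tfrac{\delta}{4(L-l)}>0$ with $l_1^{\ast}\geq l$, as desired. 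I expect the gap-counting step to be the only delicate point: one must notice that all the ``bad'' $n$ feeding into a given maximal gap lie in a window of length $l$, so that they are controlled by the number of long gaps and not by $N$; the remaining steps are routine manipulations of asymptotic density.
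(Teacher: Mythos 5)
Your argument is correct. It proves the lemma, with explicit constants: for $L$ chosen so that $L-l>8l/\delta$, you produce $l_1\in\{l,\dots,L-1\}$ with $\overline{d}(G(l_1))\geq \delta/(4(L-l))$. The only blemish is the throwaway justification ``a set of zero density is finite,'' which is false as stated (the squares have zero density); what you actually need is the trivial converse direction, that a finite set has zero upper density, so $\overline{d}(G)>0$ forces $G$ to be infinite and $t(n)$ to be well defined. The key step — that each $n$ with $t(n)\geq L$ sits in the length-$l$ window immediately to the left of a maximal gap of length $\geq L-l$, so such $n$ are controlled by the number of long gaps rather than by $N$ — is sound.

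Your route differs from the paper's in mechanism, though both are double-counting/pigeonhole arguments over a finite range of admissible distances. The paper first passes to a residue class $G_{j/l}$ modulo $l$ of positive upper density, which forces all pairwise distances in the new set to be $\geq l$ and thereby eliminates the need for any gap analysis; it then partitions $[0,N_i)$ into blocks of bounded length $100q$, observes that a positive proportion of blocks contain at least two points, and pigeonholes over the distances $\{l,\dots,100q\}$ (using $G_{j/l}(i)\subseteq G(i)$ and subadditivity of $\overline{d}$ over a finite union). You instead work with $G$ itself, define the next-return time $t(n)=\min\{t\geq l: n+t\in G\}$, and discard the exceptional $n$ with $t(n)\geq L$ by counting long maximal gaps, before pigeonholing twice (over the distance value and over the infinite family of good $N$). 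The paper's reduction buys simplicity — distances below $l$ simply cannot occur — at the cost of shrinking the density by a factor of roughly $l$ and working inside a sub-progression; your version keeps the original set and yields a clean quantitative bound $l\leq l_1<L$ with $L-l$ of order $l/\delta$ and density of order $\delta^2/l$, which is of the same quality as the paper's $1/(200q^2)$-type bound. Either argument suffices for the application (Corollary 7.4).
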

\proof
We may assume that $l\geq 2.$ For $j=0, \dots, l$,
let $G_{j/l}$ be the set of $n\in G$ with $n=j \mod l.$
Then there is $j$ such that $\overline{d}(G_{j/l})>0.$
After replacing $G$ by $G_{j/l}$, we may assume that for distinct $n_1,n_2\in G$,
$|n_1-n_2|\geq l.$
Since $\overline{d}(G)>0,$ there is $q\in \Z_{\geq l}$ and a strictly increasing sequence 
$N_i, i\geq 0$ such that $$\#(G\cap\{0,\dots, N_i-1\})\geq N_i/q.$$
We may assume that $N_i/100q$ is an integer $r_i.$
Dividing $\{0,..., N_i-1\}$ by $r_i$ segments $I_s=\{100qs,\dots, 100q(s+1)-1\}$, $s=0,\dots, r_i-1.$
It is clear that at least $(0.99N_i)/100q^2$ segments $I_s$ containing at least $2$ elements of $G.$
So $$\overline{d}(\cup_{i=1}^{100q}G(i))>1/(200q^2).$$
Since $G(i)=\emptyset$ for $i=1,\dots, l-1$, there is $i\in \{l,\dots, 100q\}$ such that 
$\overline{d}(G(i))>0$, which concludes the proof.
\endproof

\medskip

Applying Lemma \ref{lemdisinsetpos} inductively, we get the following consequence.
\begin{cor}\label{corsequldis}Let $G\subseteq \Z_{\geq 0}$ be a subset with $\overline{d}(G)>0.$ Then there is a strictly increasing sequence $l_i\geq 1, i\geq 0$ such that for every $s\geq 0,$ $\overline{d}(G(l_1,\dots, l_s))>0.$
\end{cor}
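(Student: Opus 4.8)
The plan is to construct the sequence $(l_i)_{i\geq 1}$ together with an auxiliary decreasing chain of sets by iterating Lemma \ref{lemdisinsetpos}. I would start by setting $G_0:=G$, so that $\overline{d}(G_0)>0$ by hypothesis. Supposing that $l_1<\dots<l_s$ and a set $G_s$ with $\overline{d}(G_s)>0$ have already been produced (nothing is needed for $s=0$), I would apply Lemma \ref{lemdisinsetpos} to $G_s$ with the parameter $l:=l_s+1$ (and $l:=1$ when $s=0$); this yields an integer $l_{s+1}\geq l_s+1>l_s$ with $\overline{d}\bigl(G_s(l_{s+1})\bigr)>0$, and I set $G_{s+1}:=G_s(l_{s+1})$. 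In this way I obtain a strictly increasing sequence $l_1<l_2<\cdots$ of positive integers together with sets $G_0\supseteq G_1\supseteq\cdots$ each of positive upper density.

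The key step is then to verify the containment $G_s\subseteq G(l_1,\dots,l_s)$ for every $s\geq 0$: granting this, $\overline{d}\bigl(G(l_1,\dots,l_s)\bigr)\geq \overline{d}(G_s)>0$ and the corollary follows immediately. I would prove the containment by induction on $s$, the case $s=0$ being trivial. Assuming $G_s\subseteq G(l_1,\dots,l_s)$, take $n\in G_{s+1}=G_s(l_{s+1})$, so that $n\in G_s$ and $n+l_{s+1}\in G_s$. By the inductive hypothesis $n\in G$ and $n+l_i\in G$ for $i=1,\dots,s$, while $n+l_{s+1}\in G_s\subseteq G$; hence $n$ and all of $n+l_1,\dots,n+l_{s+1}$ lie in $G$, i.e.\ $n\in G(l_1,\dots,l_{s+1})$, which closes the induction.

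I do not expect a genuine obstacle here; the argument is essentially a bookkeeping induction. The only point requiring a little care is that iterating the operation $G\mapsto G(l)$ produces a set strictly contained in (rather than equal to) $G(l_1,\dots,l_s)$ — indeed $G_{s+1}$ also forces $n+l_{s+1}+l_i\in G$ for $i\leq s$, which is more than the definition of $G(l_1,\dots,l_{s+1})$ demands — but since the conclusion only concerns positive upper density, this extra strength is harmless. Choosing the parameter $l:=l_s+1$ at each stage is precisely what guarantees that the resulting sequence is strictly increasing and has all terms $\geq 1$.
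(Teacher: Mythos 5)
Your argument is correct and is essentially the paper's own proof: the paper obtains the corollary precisely by applying Lemma \ref{lemdisinsetpos} inductively, and your explicit bookkeeping (the chain $G_{s+1}:=G_s(l_{s+1})$, the choice $l:=l_s+1$ to force strict monotonicity, and the containment $G_s\subseteq G(l_1,\dots,l_s)$) just spells out that induction.
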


\subsection{$C^1$-rigidity}
\proof[Proof of Theorem \ref{measurerigidityrealancone}]
 If one of $f,g$ is exceptional, then by \cite[Theorem 1]{zdunik1990parabolic}, both of them are exceptional. 
We now assume that both of them are non-exceptional.
When $\sJ_g$ (hence $\sJ_f$) is smooth, we may ask $\mu:=\mu_f$ and $\nu:=\mu_g$.

After shrinking $U$, we may assume that $h$ is $C$-biLipschitz. 
By Lemma \ref{key},  for $\nu|_{U}$-a.e. point $x$, 
it is not $g$-preperiodic and there are positive constants $r,R,K$ and two strictly increasing sequences of positive integers $n_j'\to +\infty$, $m_j'\to +\infty$, $j\geq 1$ satisfy the conditions (i), (ii) in Definition \ref{defibiconical} and $\overline{d}(G)>0$ where $G=\{n_j', j\geq 0\}.$
By Corollary \ref{corsequldis}, there is a strictly increasing sequence $l_i\geq 1, i\geq 1$ such that for every $s\geq 0,$ $\overline{d}(G(l_1,\dots, l_s))>0.$
Define the function $\theta: G\to \Z_{\geq 0}$ by $\theta(n_i')=m_i'.$
Then $\theta$ is strictly increasing.
Pick a sequence $n_j\in \overline{d}(G(l_1,\dots, l_j)), j\geq 1$ and set $m_j:=\theta(n_j)$.
After taking subsequence, we may assume that $g^{n_j}(x)$ converges to a point $p\in \sJ_f.$

\medskip

Define $h_j:=f^{m_j}\circ h\circ g_{n_j}$ as in Lemma \ref{conical} and $D$ be the disc centered at $p$ of radius $r/2$ as in the proof of  Lemma \ref{conical}.
After shrinking $D$ and taking subsequence, by Lemma \ref{lemconenormal},
we may assume that $h_j|_D$ converges to an injective real analytic map $H$.
Each $h_j$ satisfies the following condition: 
$h_j(U\cap \sJ_g)=h_j(U)\cap \sJ_f$,  if $\sJ_f$ is smooth, we assume further that $(h_j)_\ast (\mu_g)\propto \mu_f$ on $h_j(D)$. 
Hence $H$ satisfies the same condition.
\begin{lem}\label{lempbicon}The point $p$ is bi-conical for $(f,H,g).$
\end{lem}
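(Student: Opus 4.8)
The plan is to manufacture for $p$ the data required by Definition~\ref{defibiconical} for the triple $(g,H,f)$: constants $r',R',K'>0$ and sequences $N_i\to\infty$, $M_i\to\infty$ together with the associated inverse branches $g_{N_i}$, obtained by passing to the limit $j\to\infty$ in the bi-conical data of $x$ at the times $n_j+l_i$. The point of having chosen $n_j\in G(l_1,\dots,l_j)$ is precisely that $n_j+l_i\in G$ for every $j\ge i$, so that for such $j$ Lemma~\ref{key} furnishes the bounded-distortion inverse branch $g_{n_j+l_i}$ of $g^{n_j+l_i}$ on $B(g^{n_j+l_i}(x),r)$ with image in $U$, together with the injective map $h_{n_j+l_i}=f^{\theta(n_j+l_i)}\circ h\circ g_{n_j+l_i}$ obeying the two-sided ball inclusion of Lemma~\ref{key}(iii). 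First I would record the factorization
\[
h_{n_j+l_i}=f^{M_{i,j}}\circ h_{n_j}\circ\tau_{i,j},\qquad M_{i,j}:=\theta(n_j+l_i)-\theta(n_j)\ge 1,
\]
valid on $B(g^{n_j+l_i}(x),r)$ for $j$ large, where $\tau_{i,j}:=g^{n_j}\circ g_{n_j+l_i}$ is the inverse branch of $g^{l_i}$ carrying $g^{n_j+l_i}(x)$ to $g^{n_j}(x)$; this uses $W_{n_j+l_i}\subset W_{n_j}$, which holds for $j$ large because $g^{n_j}(W_{n_j+l_i})$ is a neighbourhood of $g^{n_j}(x)$ of diameter $\asymp r/|dg^{l_i}(g^{n_j}(x))|\to r/|dg^{l_i}(p)|$, hence small once $|dg^{l_i}(p)|$ is large (see below).

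Fixing $i$ and letting $j\to\infty$ along a subsequence, I would pass to the limit. On the ball $B(g^{l_i}(p),r/4)$ — contained in $B(g^{n_j+l_i}(x),r)$ for $j$ large, since $g^{n_j+l_i}(x)\to g^{l_i}(p)$ — the $\tau_{i,j}$ form a normal family whose limits are, by Hurwitz, injective inverse branches of $g^{l_i}$; pick $\tau_i$ with $\tau_i(g^{l_i}(p))=p$. Since $a_{n_j+l_i}-a_{n_j}=\log|dg^{l_i}(g^{n_j}(x))|$ converges, the integers $\alpha^{-1}(n_j+l_i)-\alpha^{-1}(n_j)$ stay bounded in $j$, hence so do the $M_{i,j}$ (as $\alpha'$ grows at most linearly, Lemma~\ref{function1}), and I may take them equal to a fixed $M_i\ge 1$. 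As $h_{n_j}\to H$ on $D$, this gives $h_{n_j+l_i}|_{B(g^{l_i}(p),r/4)}\to H_i:=f^{M_i}\circ H\circ\tau_i$; set $N_i:=l_i$ and $W_i':=\tau_i(B(g^{l_i}(p),r/4))$, so that $g^{l_i}:W_i'\to B(g^{l_i}(p),r/4)$ is biholomorphic. For the two-sided inclusion, each $h_{n_j+l_i}$ has distortion on $B(g^{n_j+l_i}(x),r)$ bounded uniformly in $i,j$ — being a composition of the bounded-distortion biholomorphisms $f^{\theta(n_j+l_i)}$ and $g_{n_j+l_i}$ with $h$, whose differential on the shrinking set $W_{n_j+l_i}\ni x$ is uniformly close to $dh(x)$ — so, comparing $B(g^{l_i}(p),r/4)$ with $B(g^{n_j+l_i}(x),r)$ and using Lemma~\ref{key}(iii), the set $h_{n_j+l_i}(B(g^{l_i}(p),r/4))$ is squeezed between $B(h_{n_j+l_i}(g^{n_j+l_i}(x)),cR)$ and $B(h_{n_j+l_i}(g^{n_j+l_i}(x)),R)$ with a uniform $c>0$; letting $j\to\infty$ yields the inclusion required of $H_i$, with $R':=R$ and $K':=2/c$, and $H_i$ is injective because $\tau_i$ and $H$ are and $f^{M_i}$ is injective on $H(W_i')$ (a limit of the injectivity of $f^{\theta(n_j+l_i)}=f^{M_{i,j}}\circ f^{\theta(n_j)}$).

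What remains — $W_i'\subset D$, $N_i\to\infty$, $M_i\to\infty$, and $p$ not $g$-preperiodic — all rests on taking $p$ to be \emph{good}, meaning: not $g$-preperiodic, with forward orbit disjoint from $\mathrm{Crit}(g)$, and with $|dg^n(p)|\to\infty$. For such $p$: $N_i=l_i\to\infty$; Koebe gives $\diam W_i'\asymp r/(4\,|dg^{l_i}(p)|)\to 0$, so $W_i'\subset D$ for all large $i$ (and the same smallness legitimizes $W_{n_j+l_i}\subset W_{n_j}$ used above); and $M_i\ge\alpha^{-1}(n_j+l_i)-\alpha^{-1}(n_j)$, which grows with $i$ like $\log|dg^{l_i}(p)|/\log L$. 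The good points form a $\nu$-conull set (the first two failure loci are countable, hence $\nu$-null by non-atomicity of $\nu$; the third holds $\nu$-a.e.\ because $\nu$ has positive Lyapunov exponent), so the lemma reduces to finding a subsequence with $n_j\in G(l_1,\dots,l_j)$ and $g^{n_j}(x)\to p$ good — that is, to showing that $\bigcap_s\overline{\{g^n(x):n\in G(l_1,\dots,l_s)\}}$ meets the $\nu$-conull good set. I expect this to be the main obstacle, and I would handle it by refining, inside Lemma~\ref{key}, the choice of $x$ (using ergodicity of $\nu$ and that $A_x$ has density close to $1$ while the $G(l_1,\dots,l_s)$ have positive density) so that the sub-orbits of $x$ along $G(l_1,\dots,l_s)$ stay dense in $\Supp\nu$, and then choosing $p$ generic in $\Supp\nu$.
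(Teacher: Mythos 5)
Your overall strategy is the same as the paper's: exploit $n_j\in G(l_1,\dots,l_j)$ so that $n_j+l_i\in G$, and transfer the bi-conical data of $x$ at the times $n_j+l_i$ to the limit point $p$, with $g$-times $l_i$ and $f$-times $\theta(n_j+l_i)-\theta(n_j)$. But two steps do not hold up as written. First, the stabilization of $M_{i,j}=\theta(n_j+l_i)-\theta(n_j)$: you argue these stay bounded in $j$ because $\alpha^{-1}(n_j+l_i)-\alpha^{-1}(n_j)$ is bounded and ``$\alpha'$ grows at most linearly''. Lemma \ref{function1}(ii) only controls $\alpha'$ globally; it does not bound the increments $\alpha'(m+k)-\alpha'(m)$ uniformly in $m$ (the orbit of $h(x)$ can have arbitrarily long stretches where $|df|$ is small, so the waiting time for $b_n$ to gain $k\log L$ can be unbounded), and Lemma \ref{key}(iv) likewise permits $\theta(n_j+l_i)-\theta(n_j)$ to grow with $j$. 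Hence the limit map $H_i=f^{M_i}\circ H\circ\tau_i$ on which your construction rests need not exist. The paper never lets $j\to\infty$ for fixed $i$: it fixes one sufficiently large $u_i=n_j$, uses the identity $f^{v_i}\circ h'_{u_i}\circ g_{(u_i+l_i)/u_i}=h'_{u_i+l_i}$ together with Lemma \ref{key}(iii) at the time $u_i+l_i\in G$, and chooses $u_i$ so that $H$ approximates $h'_{u_i}$ to within $99^{-99i}\min\{r,R/K\}L^{-l_i}$, i.e.\ an error below the scale $L^{-l_i}$ of the relevant balls; the two-sided inclusion for $f^{v_i}\circ H\circ g_{(u_i+l_i)/u_i}$ is then obtained as a relative (multiplicative) perturbation of the one for $h'_{u_i+l_i}$, which survives the application of $f^{v_i}$. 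No limit in $j$, and no stabilization of the $f$-times, is needed.

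Second, and more seriously, your argument is explicitly conditional on $p$ being ``good'' ($|dg^{l_i}(p)|\to\infty$, not $g$-preperiodic, orbit avoiding the critical set), and you acknowledge that producing such a $p$ as a limit of $g^{n_j}(x)$ with $n_j\in G(l_1,\dots,l_j)$ is the main obstacle, offering only a sketch (refining the choice of $x$ in Lemma \ref{key} so the sub-orbits along $G(l_1,\dots,l_s)$ are dense in $\Supp\nu$, then taking $p$ generic). That reduction is not carried out, and it is where the substance of the lemma lies; the paper's proof is organized precisely so as not to impose any such hypothesis on $p$: all expansion, shrinking and distortion estimates are made at the times $u_i+l_i\in G$ relative to the original point $x$ (via Lemma \ref{derivative}, Lemma \ref{lp2} and the distortion control built into Lemma \ref{key}), and the quantitative closeness of $g^{u_i}(x)$, $g^{u_i+l_i}(x)$ to $p$, $g^{l_i}(p)$ is only used to compare balls centered at these points. (Your observation that non-preperiodicity of $p$ is required by Definition \ref{defibiconical} is a fair point -- the paper's proof is silent on it -- but your proposed remedy remains a plan rather than a proof.) So as it stands the proposal has a genuine gap, in addition to taking a limiting route that the paper deliberately avoids.
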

By Lemma \ref{conicalrealanaly}, up to change $f$ to $\overline{f}$, there exist positive integers $a$ and $b$ and an irreducible algebraic curve $Z$ in $\P^1(\C)\times \P^1(\C)$ such that $Z$ is periodic under $(f^a,g^b)$.
 If $\sJ_g$ is not contained in a circle, by Theorem \ref{measurerigidityrealan}, $H$ is conformal. 
 Then $dh(x)$ is conformal by Lemma \ref{lemconenormal}. We conclude the proof.
\endproof

\proof[Proof of Lemma \ref{lempbicon}]
For an open subset $\Omega$ in $\P^1(\C)$ and $y\in \Omega$,
define $$\rho^*(\Omega, y):=\inf\{r\geq 0, \Omega\subseteq B(y,r)\}$$
and $$\rho_*(\Omega, y):=\sup\{r\geq 0, B(y,r)\subseteq \Omega\}.$$

For every $s_1\in G$ and $s_2\in \{0,\dots, s_1\}$, let $W_{s_1/s_2}$ be the connected component of $g^{-(s_1-s_2)}(B(g^{s_1}(x), r))$ containing $g^{s_2}(x).$
Denote by $g_{s_1/s_2}: B(g^{s_1}(x), r)\to W_{s_1/s_2}$ the inverse of the map $g^{s_1-s_2}: W_{s_1/s_2}\to B(g^{s_1}(x), r)$.
For every $u\in G$, define $h'_{u}:=f^{\theta(u)}\circ h\circ g_{u}.$
Set $P:=\{n_i, i\geq 1\}.$
 after shrinking $r$, we may assume that 
$h'_{n_i}$ tends to $H$ uniformly on $D.$
By Koebe distortion theorem,  after shrinking $r$, we may assume that 
for every $s_1\in G$ and $s_2\in \{0,\dots, s_1\}$, the injections $g_{s_1/s_2}$ and $h'_{s_1}$ has good distortion in the following sense:
For every $0<t_1\leq t_2\leq 1$, $$\frac{\rho^*(g_{s_1/s_2}(B(g^{s_1}(x), t_2)), g^{s_2}(x))}{\rho_*(g_{s_1/s_2}(B(g^{s_1}(x), t_1)), g^{s_2}(x))}\leq (1+99^{-999})\frac{t_2}{t_1}$$ 
$$\frac{\rho_*(g_{s_1/s_2}(B(g^{s_1}(x), t_2)), g^{s_2}(x))}{\rho^*(g_{s_1/s_2}(B(g^{s_1}(x), t_1)), g^{s_2}(x))}\geq (1-99^{-999})\frac{t_2}{t_1}$$
$$\frac{\rho^*(g_{s_1}(B(g^{s_1}(x), t_2)), h'_{s_1}(g^{s_1}(x)))}{\rho_*(g_{s_1}(B(g^{s_1}(x), t_1)), h'_{s_1}(g^{s_1}(x)))}\leq (1+99^{-999})\frac{t_2}{t_1}$$
and 
$$\frac{\rho_*(g_{s_1}(B(g^{s_1}(x), t_2)), h'_{s_1}(g^{s_1}(x)))}{\rho^*(g_{s_1}(B(g^{s_1}(x), t_1)), h'_{s_1}(g^{s_1}(x)))}\geq (1-99^{-999})\frac{t_2}{t_1}.$$
\medskip

For every $i\geq 0$, let $W_i$ be the connected component of $g^{-l_i}(B(g^{l_i}(q), r/2))$ containing $q$.
Let $L$ be a constant larger than the maximum of $|df|$ and $|dg|$ on $\P^1(\C)$.
Pick $u_i\in P$ sufficiently large, we have $u_i+l_i\in G$, 
$$d(q,g^{u_i}(x))\leq 99^{-99i}\min\{r,R/K\}L^{-l_i}$$ and for every $z\in D$,
\begin{equation}\label{equhhpe}d(H(g^{u_i}(z)),h'_{u_i}(g^{u_i}z))\leq 99^{-99i}\min\{r,R/K\}L^{-l_i}.
\end{equation}
Since $d(g^{l_i}(q),g^{u_i+l_i}(x))\leq 99^{-99i}\min\{r,R/K\},$ 
we have $$B(g^{l_i+u_i}(x), 0.49r)\subseteq B(g^{l_i}(q), r/2)\subseteq B(g^{l_i+u_i}(x), 0.51r).$$
Then $g_{l_i,q}:=g_{(u_i+l_i)/u_i}|_{B(g^{l_i}(q), r/2)}$ is injective. 
By Lemma \ref{derivative} and the assumption that $g_{(u_i+l_i)/u_i}$ have good distortion, 
$$\diam(g_{(u_i+l_i)/u_i}(B(g^{l_i}(q), r)))\to 0.$$
We may assume that for every $i\geq 1$, 
$g_{(u_i+l_i)/u_i}(B(g^{l_i}(q), r))\subseteq B(q, 0.1r).$
Set $$v_i:=\theta(u_i+l_i)-\theta(u_i).$$

\medskip

Observe that 
$$\rho_*(g_{(u_i+l_i)/u_i}(B(g^{l_i+u_i}(x), 0.49r)), g^{u_i}(x))\geq 0.49rL^{-l_i}.$$
Then we have 
$$\frac{\rho_*(h'_{u_i}(g_{(u_i+l_i)/u_i}(B(g^{l_i+u_i}(x), 0.49r))), g^{u_i}(x))}{\rho^*(h_{u_i}(B(g^{u_i}(x),r), 0.49r))), g^{u_i}(x))}\geq 0.48L^{-l_i}$$
So 
\begin{equation}\label{equlowbabs}\rho_*(h'_{u_i}(g_{(u_i+l_i)/u_i}(B(g^{l_i+u_i}(x), 0.49r))), g^{u_i}(x))\geq 0.48L^{-l_i}R/K.
\end{equation}
By (\ref{equhhpe}),
we have 
\begin{equation}\label{equjhrdown}
\begin{split}
&\rho_*(H(g_{(u_i+l_i)/u_i}(B(g^{l_i+u_i}(x), 0.49r))), h'_{u_i}(g^{u_i}(x)))\\
\geq  & \rho_*(h'_{u_i}(g_{(u_i+l_i)/u_i}(B(g^{l_i+u_i}(x), 0.49r))), h'_{u_i}(g^{u_i}(x)))\\
&-2\times 99^{-99i}\min\{r,R/K\}L^{-l_i}
\end{split}
\end{equation}
and
\begin{equation}\label{equjhrup}
\begin{split}
&\rho^*(H(g_{(u_i+l_i)/u_i}(B(g^{l_i+u_i}(x), 0.51r))), h'_{u_i}(g^{u_i}(x)))\\
\geq  & \rho^*(h'_{u_i}(g_{(u_i+l_i)/u_i}(B(g^{l_i+u_i}(x), 0.51r))), h'_{u_i}(g^{u_i}(x)))\\
&+2\times 99^{-99i}\min\{r,R/K\}L^{-l_i}.
\end{split}
\end{equation}
By (\ref{equlowbabs}) and (\ref{equjhrdown}), we get 
\begin{equation}\label{equjhrdownrel}
\begin{split}
&\rho_*(H(g_{(u_i+l_i)/u_i}(B(g^{l_i+u_i}(x), 0.49r))), h'_{u_i}(g^{u_i}(x)))\\
\geq  & 0.99\rho_*(h'_{u_i}(g_{(u_i+l_i)/u_i}(B(g^{l_i+u_i}(x), 0.49r))), h'_{u_i}(g^{u_i}(x)))\\
\geq  & 0.48\rho_*(h'_{u_i}(g_{(u_i+l_i)/u_i}(B(g^{l_i+u_i}(x), r))), h'_{u_i}(g^{u_i}(x))).
\end{split}
\end{equation}

By (\ref{equlowbabs}) and (\ref{equjhrup}), we get 
\begin{equation}\label{equjhruprel}
\begin{split}
&\rho^*(H(g_{(u_i+l_i)/u_i}(B(g^{l_i+u_i}(x), 0.51r))), h'_{u_i}(g^{u_i}(x)))\\
\leq  & 1.01\rho^*(h'_{u_i}(g_{(u_i+l_i)/u_i}(B(g^{l_i+u_i}(x), 0.51r))), h'_{u_i}(g^{u_i}(x)))\\
\leq  & 0.52\rho^*(h'_{u_i}(g_{(u_i+l_i)/u_i}(B(g^{l_i+u_i}(x), r))), h'_{u_i}(g^{u_i}(x))).
\end{split}
\end{equation}

So we get
\begin{equation}\label{equjhrdownrelend}
\begin{split}
&\rho_*(f^{v_i}(H(g_{(u_i+l_i)/u_i}(B(g^{l_i}(q), r/2)))), f^{v_i}(h'_{u_i}(g^{u_i}(x))))\\
\geq &\rho_*(f^{v_i}(H(g_{(u_i+l_i)/u_i}(B(g^{l_i+u_i}(x), 0.49r)))), f^{v_i}(h'_{u_i}(g^{u_i}(x))))\\
\geq  & 0.47\rho_*(f^{v_i}(h'_{u_i}(g_{(u_i+l_i)/u_i}(B(g^{l_i+u_i}(x), r)))), f^{v_i}(h'_{u_i}(g^{u_i}(x))))\\
\geq &0.47R/K.
\end{split}
\end{equation}
and 
\begin{equation}\label{equjhruprelend}
\begin{split}
&\rho^*(f^{v_i}(H(g_{(u_i+l_i)/u_i}(B(g^{l_i}(q), r/2)))), f^{v_i}(h'_{u_i}(g^{u_i}(x))))\\
\leq &\rho^*(f^{v_i}(H(g_{(u_i+l_i)/u_i}(B(g^{l_i+u_i}(x), 0.51r)))), f^{v_i}(h'_{u_i}(g^{u_i}(x))))\\
\leq  & 0.53\rho^*(f^{v_i}(h'_{u_i}(g_{(u_i+l_i)/u_i}(B(g^{l_i+u_i}(x), r)))), f^{v_i}(h'_{u_i}(g^{u_i}(x))))\\
\leq &0.53R.
\end{split}
\end{equation}
This concludes the proof.
\endproof

\proof[Proof of Theorem \ref{conicalrigiditycone}]
We note that if $\sJ_f$ or $\sJ_g$ is $\P^1(\C)$, then both of them are $\P^1(\C)$.
The proof of Theorem \ref{conicalrigidity} show that there is a bi-conical point $x$ for $(g,h,f)$.
Moreover, if $\sJ_f=\P^1(\C)$, the CER $K$ in the proof of Theorem \ref{conicalrigidity}  con be constructed in any given open subset.
So the bi-conical points for $(g,h,f)$ are dense in $U.$
As in the proof of Theorem \ref{measurerigidityrealancone}, one can construct a real analytic morphism $H: D\to \P^1(\C)$ such that 
$H(D\cap \sJ_g)=H(D)\cap \sJ_f$; if $\sJ_f$ is smooth, we assume further that $H_\ast (\mu_g)\propto \mu_f$ on $H(U)$.
Moreover, $H$ is conformal if and only if $dh(x)$ is conformal.

Apply the argument in the proof of Theorem \ref{conicalrigidity} again, we show that there is a bi-conical point $y$ for $(g,H,f)$.
By Lemma \ref{conicalrealanaly}, Then up to change $f$ to $\overline{f}$,  there exist positive integers $a$ and $b$ and an irreducible algebraic curve $Z$ in $\P^1(\C)\times \P^1(\C)$ such that $Z$ is preperiodic under $(f^a,g^b)$.
Moreover, if $\sJ_f=\P^1(\C)$, then $H$ is conformal. Then $dh(x)$ is conformal. Since such $x$ can be chosen in a dense set in $U$ and $h$ is $C^1$, $h$ is conformal. This concludes the proof.
\endproof


\end{document}